\theoremstyle{plain}
\newtheorem{corollary}[subsection]{Corollary}	
\newtheorem{lemma}[subsection]{Lemma}
\newtheorem{proposition}[subsection]{Proposition}
\newtheorem{theorem}[subsection]{Theorem}
\theoremstyle{definition}
\newtheorem{definition}[subsection]{Definition}
\newtheorem{convention}[subsection]{Convention}
\theoremstyle{remark}
\newtheorem{example}[subsection]{Example}
\newtheorem{examples}[subsection]{Examples}
\newtheorem{remark}[subsection]{Remark}
\newtheorem{remarks}[subsection]{Remarks}
\newenvironment{tfae}
{
	\begin{enumerate}}
	{\end{enumerate}}
\numberwithin{equation}{section}
\newcommand{\noproof}{\hfill \qed}
\newcommand{\defn}{\textbf}
\newcommand{\To}{\Rightarrow}
\newcommand{\psj}{\curlyvee}
\newcommand{\da}{{\downarrow}}
\newcommand{\ua}{{\uparrow}}
\newcommand{\oleq}{\subseteq} 
\newcommand{\normal}{\trianglelefteq} 
\newcommand{\omeet}{\cap} 
\newcommand{\bigomeet}{\bigcap} 
\newcommand{\bigojoin}{\bigcup} 
\newcommand{\ojoin}{\cup}  
\newcommand{\iimplies}{\shortrightarrow} 
\newcommand{\C}{\ensuremath{\mathcal{C}}}
\newcommand{\D}{\ensuremath{\mathcal{D}}}
\newcommand{\K}{\ensuremath{\mathbb{K}}}
\newcommand{\Ab}{\ensuremath{\mathsf{Ab}}}
\newcommand{\Grp}{\ensuremath{\mathsf{Grp}}}
\newcommand{\Pt}{\ensuremath{\mathsf{Pt}}}
\newcommand{\SpltExt}{\ensuremath{\mathsf{SpltExt}}}
\newcommand{\GpHSLat}{\ensuremath{\mathsf{GpHSLat}}}
\newcommand{\HSLat}{\ensuremath{\mathsf{HSLat}}}
\newcommand{\Hoops}{\ensuremath{\mathsf{Hoops}}}
\newcommand{\LAb}{\ensuremath{\mathsf{LAb}}}
\newcommand{\Set}{\ensuremath{\mathsf{Set}}}
\DeclareMathOperator{\Ker}{Ker}
\DeclareMathOperator{\Huq}{Huq}
\DeclareMathOperator{\op}{op}
\DeclareMathOperator{\Sub}{Sub}
\DeclareMathOperator{\Eq}{Eq}
\DeclareMathOperator{\Coker}{Coker}
\DeclareMathOperator{\coker}{coker}
\newcommand{\LACC}{{\rm (LACC)}}
\newcommand{\ACC}{{\rm (ACC)}}
\newcommand{\NH}{{\rm (NH)}}
\newcommand{\SH}{{\rm (SH)}}
\newcommand{\SSH}{{\rm (SSH)}}
\newcommand{\W}{{\rm (W)}}
\newcommand{\AC}{{\rm (AC)}}
\newcommand{\ToN}{{\rm (ToN)}}
\newcommand{\Dhead}{-Triangle[open]}
\newcommand{\Dtail}{Triangle[open, reversed]->}
\begin{document}

\title[Categorical-algebraic aspects of Heyting semilattices]{Categorical-algebraic aspects \\of Heyting semilattices}

\author[García-Martínez]{X.~García-Martínez}
\author[Gray]{J.~R.~A.~Gray}
\author[Hoefnagel]{M.~A.~Hoefnagel}
\author[Van~der Linden]{T.~Van~der Linden}
\author[Vienne]{C.~Vienne}

\address[Xabier García-Martínez]{CITMAga \& Universidade de Santiago de Compostela, Departamento de Mate\-máticas, R\'ua Lope G\'omez de Marzoa s/n, 15782 Santiago de Compostela, Spain}
\email{xabier.garcia@usc.gal}
\address[James R.\ A.\ Gray, Michael A.\ Hoefnagel]{Mathematics Division, Department of Mathematical Sciences, Stellenbosch University, Private Bag X1, 7602 Matieland, South Africa}
\email{jamesgray@sun.ac.za}
\email{mhoefnagel@sun.ac.za}
\address[Tim Van~der Linden, Corentin Vienne]{Institut de
    Recherche en Math\'ematique et Physique, Universit\'e catholique
    de Louvain, che\-min du cyclotron~2 bte~L7.01.02, B--1348
    Louvain-la-Neuve, Belgium}
\email{tim.vanderlinden@uclouvain.be}
\email{corentin.vienne@uclouvain.be}
\address[Tim Van der Linden]{Mathematics \& Data Science, Vrije Universiteit Brussel, Pleinlaan 2, B--1050 Brussel, Belgium}
\email{tim.van.der.linden@vub.be}

\address[Corentin Vienne]{Dipartimento di Matematica ``Federigo Enriques'', Universit\`a degli Studi di Milano, Via Saldini 50, 20133 Milano, Italy}

\thanks{The first author is supported by Ministerio de Economía y Competitividad (Spain), through grant number PID2021-127075NA-I00. The fourth author is a Senior Research Associate of the Fonds de la Recherche Scientifique--FNRS. The fifth author is supported by the \emph{Fonds Thelam} of the \emph{Fondation Roi Baudouin}.}

\begin{abstract}
    This article gives an overview of some key categorical-algebraic properties of the variety of Heyting semilattices, with the aim of correcting a misconception in the literature. We confirm that the category of Heyting semilattices is not \emph{algebraically coherent}, even though it satisfies a strong version of the so-called \emph{Smith is Huq} condition (on the equivalence of two types of commutators).

    We also prove that Higgins commutators of normal subobjects are normal, as a consequence of the fact that Heyting semilattices form an \emph{arithmetical} category. We provide an elementary characterisation of when a pair of subobjects commutes, and use this in the construction of two counterexamples.

    We further show that centralisers exist, centralisers of normal monomorphisms are normal monomorphisms, and normal monomorphisms are closed under composition. We study the latter condition in detail. On the other hand, we show that the category of Heyting semilattices does not satisfy \emph{normality of unions}. Hence, it is not \emph{action accessible} and so it does not admit all normalisers. In particular, this means that the known implication between action accessibility and the condition requiring the existence of centralisers of normal monomorphisms which are themselves normal, is strict.
\end{abstract}

\subjclass[2020]{03G25, 06A12, 18E13}
\keywords{Heyting semilattice, arithmetical semi-abelian category, action accessibility, algebraic coherence, normality of unions, Smith is Huq condition, transitivy of normality, Higgins commutator, centraliser, normaliser}

\maketitle

\section{Introduction}

\defn{Heyting semilattices}, also known as \defn{implicative} or \defn{Brouwerian} semilattices, form an algebraic structure which is closely related to intuitionistic logic. A~Heyting semilattice \((H,1,\wedge ,\iimplies)\) consists of a set \(H\), a constant \(1\in H\) and two operations \(\wedge\), \(\iimplies \colon H \times H \rightarrow H\) named \defn{meet} and \defn{implication} satisfying the equations
\begin{align*}
    1\wedge x           & = x,                    & (x\iimplies x)           & =1,                                     \\
    x\wedge x           & =x,                     & x\wedge ( x \iimplies y) & = x \wedge y,                           \\
    x\wedge y           & =y\wedge x,             & y \wedge (x \iimplies y) & =y,                                     \\
    x\wedge (y\wedge z) & = (x \wedge y)\wedge z, & x \iimplies (y \wedge z) & = (x\iimplies y) \wedge (x\iimplies z).
\end{align*}
The four identities on the left express that \(H\) is a partially ordered set with a top element \(1\) where \(x \leq y\) if and only if \(x\wedge y = x\). The remaining identities can be thought of as forcing implication to be right adjoint to meet. In fact, as is well-known, an equivalent way of thinking about Heyting semilattices is to define them as \emph{cartesian closed posets}. Indeed, a poset \((H,\leq)\) is a Heyting semilattice precisely when there exists an operation \(\iimplies \colon H \times H \rightarrow H\) such that
\begin{align*}
    x \wedge y \leq z \quad \text{if and only if} \quad x \leq y\iimplies z,
\end{align*}
which exhibits the left adjointness of product (\(\wedge\)) to internal hom (\(\iimplies\)). Therefore, given a poset, there is at most one Heyting semilattice structure on it. Morphisms between Heyting semilattices are defined in the expected way; they preserve the constant and the two operations. Note---see for instance~\cite{Nemitz}---that the following items are true in any Heyting semilattice \(H\), and will be used freely in what follows.
\begin{itemize}
    \item For any \(x\), \(y\), \(z \in H\) we have \((x \wedge y) \iimplies z = x \iimplies (y \iimplies z)=(x\iimplies y)\iimplies (x\iimplies z)\);
    \item for any \(x\in H\) the map \(H\to H\colon y\mapsto (x \iimplies y)\) is a morphism;

    \item for any \(x\), \(y \in H\) we have \(y \leqslant x\iimplies y\) and \(x\leqslant (x \iimplies y) \iimplies y\) by the left adjointness of ``\(\wedge\)'' to ``\(\iimplies\)'', from which we also get \(y \leqslant (x\iimplies y) \iimplies y\).
\end{itemize}

We write \(\HSLat\) for the category of Heyting semilattices with the corresponding morphisms. It is a variety in the sense of Universal Algebra, which has first been shown to be a \emph{semi-abelian} category (in the sense of Janelidze--Márki--Tholen~\cite{Janelidze-Marki-Tholen}) by Johnstone in~\cite{Johnstone:Heyting}, using the following characterisation due to Bourn and Jan\-elidze~\cite{Bourn-Janelidze}:

\begin{theorem}\label{BouJa Charact}
    A variety is semi-abelian if and only if the corresponding algebraic theory contains exactly one constant \(0\) and for some natural number \(n\) we have
    \begin{itemize}
        \item \(n\) binary operations \(\alpha_i\) such that \(\alpha_i(x,x) =0\) for all \(i=1, \dots , n\),
        \item an \((n+1)\)-ary operation \(\Theta\) such that \(\Theta(\alpha_1(x,y), \dots , \alpha_n(x,y),y)= x\).\noproof
    \end{itemize}
\end{theorem}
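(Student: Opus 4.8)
The plan is to split the equivalence into its two natural halves and, before doing so, to reduce \emph{semi-abelian} to the single condition of \emph{protomodularity}. Every variety is Barr-exact and has all coproducts, so of the four defining ingredients of a semi-abelian category only pointedness and protomodularity remain to be checked. Pointedness of a variety means that the free algebra on the empty set is a one-element algebra, equivalently that the initial and terminal objects coincide; for an algebraic theory this is exactly the requirement of a single constant $0$ (and it is automatic that $\Theta(0,\dots,0,0)=0$ and $\alpha_i(0,0)=0$ follow from the stated identities). So the whole content is the characterisation of protomodularity by the operations $\alpha_1,\dots,\alpha_n,\Theta$, and I would use Bourn's reformulation: a pointed, finitely complete category is protomodular if and only if, for every split extension with kernel $k\colon K\to A$, split epimorphism $p\colon A\to B$ and section $s\colon B\to A$, the pair $(k,s)$ is jointly strongly epimorphic. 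In a variety this says precisely that $K\cup s(B)$ generates $A$ as a subalgebra.

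For the sufficiency direction I would argue directly. Assume the operations exist, fix a split extension as above, take any $a\in A$ and put $b:=p(a)$. Each element $\alpha_i(a,s(b))$ lies in $K$, since
\[
p\bigl(\alpha_i(a,s(b))\bigr)=\alpha_i\bigl(p(a),ps(b)\bigr)=\alpha_i(b,b)=0,
\]
using $\alpha_i(x,x)=0$ and $ps=\mathrm{id}$. Instantiating the identity for $\Theta$ at $x:=a$, $y:=s(b)$ then gives
\[
a=\Theta\bigl(\alpha_1(a,s(b)),\dots,\alpha_n(a,s(b)),s(b)\bigr),
\]
which realises $a$ as a term applied to elements of $K$ and the element $s(b)\in s(B)$. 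Hence $K\cup s(B)$ generates $A$, and Bourn's criterion yields protomodularity.

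For the necessity direction I would evaluate protomodularity on a single \emph{generic} split extension. Writing $F\{x,y\}$ and $F\{y\}$ for the free algebras on two and one generators, consider
\[
F\{x,y\}\ \underset{s}{\overset{p}{\rightleftarrows}}\ F\{y\},\qquad p(x)=p(y)=y,\quad s(y)=y .
\]
Its kernel $K$ consists of the terms $t(x,y)$ with $t(y,y)=0$ in $F\{y\}$; since an equation holds in a free algebra exactly when it is an identity of the variety, these are the $t$ with $t(x,x)=0$. Joint strong epimorphy forces the generator $x$ into the subalgebra generated by $K\cup s(F\{y\})$. A witnessing membership is a single term, so it uses only finitely many kernel elements $\alpha_1(x,y),\dots,\alpha_n(x,y)$ and finitely many elements of $s(F\{y\})$; as every element of $s(F\{y\})$ is a term in the one generator $y$, the witness can be rewritten as $x=\Theta(\alpha_1(x,y),\dots,\alpha_n(x,y),y)$. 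Reading this back as an identity, together with the kernel conditions $\alpha_i(x,x)=0$, produces precisely the operations sought.

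The main obstacle, and where the genuine work lies, is the necessity direction: one must first be sure that Bourn's joint-strong-epimorphy criterion is available, then carry out the bookkeeping that collapses the possibly many section-side generators into the single last slot $y$ of $\Theta$, and finally use correctly the principle that equalities in free algebras are identities of the variety. The reduction of semi-abelianness to protomodularity and the sufficiency computation are, by comparison, routine once the categorical reformulation is in hand.
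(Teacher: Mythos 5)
The paper itself does not prove this theorem: it is quoted from Bourn and Janelidze's characterisation paper and stated without proof, so there is no in-paper argument to compare against. Your proposal is correct and is essentially a reconstruction of the standard proof from that reference: the reduction of semi-abelianness to pointedness plus protomodularity (varieties being Barr-exact with coproducts), the identification of pointedness with the theory having exactly one constant, the verification of sufficiency via $p(\alpha_i(a,s(b)))=\alpha_i(b,b)=0$ and the $\Theta$-identity, and, for necessity, the generic split extension $F\{x,y\}\rightleftarrows F\{y\}$ together with the principle that equalities in free algebras are identities of the variety--this last trick is precisely the heart of the Bourn--Janelidze argument. Two steps deserve explicit justification rather than bare assertion: first, Bourn's criterion that a pointed finitely complete category is protomodular if and only if, for every split epimorphism with section $s$ and kernel $k$, the pair $(k,s)$ is jointly strongly epimorphic is itself a nontrivial theorem and needs a citation, as does the observation that in a variety a jointly strongly epimorphic pair of subalgebra inclusions is exactly a jointly generating pair; second, in the necessity direction the passage from ``$x$ lies in the subalgebra generated by $K\cup s(F\{y\})$'' to a single witnessing term should be phrased as: $x=w(\alpha_1(x,y),\dots,\alpha_n(x,y),t_1(y),\dots,t_r(y))$ for some term $w$, finitely many kernel elements and finitely many unary terms $t_i$, whereupon one \emph{defines} $\Theta(u_1,\dots,u_n,y)\coloneq w(u_1,\dots,u_n,t_1(y),\dots,t_r(y))$ --- which is exactly the bookkeeping you indicate. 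With those two points made precise, your proof is complete.
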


Johnstone showed that taking the constant \(0\) to be \(1\) and the operations
\begin{align}\label{lemma protomodular terms for HSL}
    \alpha_1(x,y)= x \iimplies y, \;
    \alpha_2(x,y)= ((x\iimplies y)\iimplies y)\iimplies x, \;
    \Theta(x,y,z) = (x \iimplies z) \wedge y,
\end{align}
the conditions of Theorem~\ref{BouJa Charact} are satisfied. What is particularly interesting in Johnstone's paper is that he also shows that it is impossible to characterise Heyting semilattices via semi-abelian terms with \(n=1\). More recently, Lapenta, Metere and Spada proved in~\cite{LMS} that the fact that Heyting semilattices forms a semi-abelian variety can be seen as a corollary of the fact that the variety \(\Hoops\) of the so-called \emph{hoops} is semi-abelian; indeed, \(\HSLat\) is equivalent to the subvariety of idempotent hoops.

It was proven by Köhler~\cite{Kohler} that the variety of Heyting semilattices is such that its lattices of congruences are distributive. In Pedicchio's terminology~\cite{Pedicchio2}, this means that the category \(\HSLat\) is \emph{arithmetical}. Later Rodelo proved in~\cite{Rodelo:Moore} that it is \emph{strongly protomodular} category~\cite{B4} as well, which implies that it satisfies the so-called \emph{Smith is Huq} condition~\cite{BG,MFVdL}. Semidirect products in \(\HSLat\) were described by the authors of~\cite{MMClementinoAMontoliLSousa2015}.

In this paper, we study the categorical-algebraic aspects of Heyting semilattices, most of which are related to commutator theory. While exploring commutators in the context of Heyting semilattices, we understood that some results are generally valid for arithmetical semi-abelian categories. In particular, we prove that in such a category, the Higgins commutator of two normal subobjects is their intersection (Theorem~\ref{theorem arithm commu is intersection}), which implies that it is always a normal subobject again. This shows that any arithmetical semi-abelian category satisfies a condition called (NH)---see Corollary~\ref{Corollary (NH)}.

Another key insight which is elaborated upon in Section~\ref{Section Normal Subobjects} is the fact that since in the category \(\HSLat\), normal subobjects are represented by filters, any composite of two normal monomorphisms is again a normal monomorphism. This has interesting consequences, such as for instance strong protomodularity and the fact that the kernel functors preserve normal closures (Theorem~\ref{theorem ToN consequences}). We analyse this condition in general in semi-abelian categories (Section~\ref{Section ToN})---where we prove it to be equivalent to the condition that any normal subobject is characteristic, and provide a characterisation in terms of the change-of-base functors of the fibration of points. 

After a more detailed analysis in Section~\ref{section commutators in hslat} of commutativity for pairs of sub-Heyting semilattices, we prove that in \(\HSLat\), centralisers always exist, and centralisers of normal subobjects are normal (Proposition~\ref{Proposition Centralisers}).

What originally motivated us to start our investigations leading to the current article was our discovery of a mistake in a proof in the article~\cite{MFVdL3}, a counterexample involving Heyting semilattices. At first, we wanted to fix the example---until we realised the claim itself is wrong. This led to the results in Sections~\ref{section SSH and W} and~\ref{section AC}, where we (dis)prove certain categorical-algebraic properties for \(\HSLat\). Details are given there; let us just mention that the main results are Theorem~\ref{HSLat is SSH}, Theorem~\ref{HSLat is not W} and Theorem~\ref{HSLat not AC}. Using a single counterexample, we disprove two further categorical-algebraic conditions for \(\HSLat\) in Section~\ref{Section Normalisers and AA}.

We start the mathematical development in Section~\ref{section preliminaries} which recalls some basic features of the categorical context we shall be working in. In Section~\ref{section open questions} we end with some questions which, for now, remain open.

\section{Categorical preliminaries}\label{section preliminaries}

This section is designed in order to provide the (categorical) background on which the next sections are based. The reader might skip it in a first reading and use it for consultation whenever this is required. We recall definitions and results in a manner which is not very detailed, with the main aim of providing a self-contained paper, referencing the literature as much as possible.

\subsection*{Categorical framework}
First, let us sketch the framework in which we are working: semi-abelian categories.

\begin{definition}[\cite{Janelidze-Marki-Tholen}]
    A category \(\C\) is \defn{semi-abelian} when it is pointed, exact (in the sense of Barr), protomodular (in the sense of Bourn) and admits binary coproducts.
\end{definition}

The reader need not be an expert in the subject of semi-abelian categories in order to enjoy the rest of the paper. However, we encourage to keep in mind that it is a weaker framework than abelian categories which allows a unified treatment of homological lemmas, commutator theory or even notions like actions and semidirect products. We strongly recommend the reference book~\cite{Borceux-Bourn} as a guide tour of the subject.

\begin{examples}
    All abelian categories are semi-abelian. But the converse is not true since the category of groups, which constitutes a main example to remember as a motivation, is semi-abelian, as is any \emph{variety of \(\Omega\)-groups} in the sense of Higgins~\cite{Higgins}. Other examples are Heyting semilattices as already mentioned in the introduction, any variety of non-associative algebras over a fixed field \(\K\), the category of rings (with or without unit, morphisms need not preserve the unit when it exists), the dual of the category of pointed objects in an elementary topos, compact Hausdorff groups, etc. Classical non-examples are the category of monoids (protomodularity fails) and the category of topological groups (exactness fails).
\end{examples}

Now, let us recall the following fact about relations in semi-abelian categories. For a detailed account of internal relations, we suggest the Appendix of~\cite{Borceux-Bourn}. Note that from the Barr exactness, it follows that any semi-abelian category has all finite limits.

\begin{proposition}[\cite{Borceux-Bourn}]
    In a semi-abelian category, every reflexive relation is an equivalence relation.\qed
\end{proposition}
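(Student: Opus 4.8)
The plan is to recognise this statement as the assertion that every semi-abelian category is a \emph{Mal'cev} category, and to extract it from protomodularity together with the finite limits guaranteed by Barr exactness; binary coproducts are not needed, and I would exploit pointedness only as a convenience, to phrase protomodularity via the split short five lemma. Since reflexivity is assumed, it remains to prove symmetry and transitivity. I would first establish that \emph{every} reflexive relation is symmetric. Granting this for all reflexive relations at once, the category is Mal'cev, and transitivity---hence the full equivalence-relation property---then follows from the standard theory, since for reflexive relations in a finitely complete category symmetry, transitivity and being an equivalence relation are each equivalent to the Mal'cev condition. So the heart of the matter is symmetry.

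The tool I would use is the characterisation of pointed protomodularity by the \emph{split short five lemma}: a morphism between two split epimorphisms over a common base, commuting with the chosen sections and inducing an isomorphism on kernels, is itself an isomorphism. Let $(R, r_0, r_1)$ be a reflexive relation on an object $X$, with reflexivity $\delta \colon X \to R$ satisfying $r_0 \delta = r_1 \delta = 1_X$. Form the opposite relation $R^\circ$ (the same subobject with the two projections interchanged) and the intersection $S = R \cap R^\circ$, which is the largest symmetric relation contained in $R$; write $j \colon S \to R$ for the inclusion. Since $\delta$ factors through both $R$ and $R^\circ$, it factors through $S$, so $r_0 \colon R \to X$ and its restriction $r_0 j \colon S \to X$ are split epimorphisms sharing the section $\delta$, and $j$ is a morphism of points over $X$ compatible with the sections. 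By the split short five lemma, $R$ is symmetric (that is, $j$ is an isomorphism) as soon as the induced comparison on kernels $\Ker(r_0 j) \to \Ker(r_0)$ is an isomorphism.

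This last comparison is always a monomorphism, and it is an isomorphism precisely when $\Ker(r_0) \hookrightarrow R$ factors through $S$; unwinding the definitions, this amounts to the implication that whenever an element is related by $R$ to the basepoint on one side, it is also related on the other. I expect this to be the main obstacle: the split short five lemma cleanly reduces symmetry of an arbitrary reflexive relation to this \emph{symmetry at the basepoint} of its kernel, but closing that final step is exactly the substantive content of the implication protomodular $\To$ Mal'cev, and it genuinely requires protomodularity rather than mere finite completeness (the statement fails, for instance, in pointed finitely complete categories that are not Mal'cev). The cleanest way I know to finish is Bourn's fibrational argument, showing that each fibre of the fibration of points is unital and deducing the Mal'cev property from there; alternatively one iterates the split short five lemma, comparing $R$ simultaneously through both of its projections. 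Transitivity is then handled by the analogous comparison of $R$ with the composite relation $R \circ R$, or simply invoked from the Mal'cev property once symmetry is in hand.
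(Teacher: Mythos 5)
Your reduction is set up correctly: \(S = R \cap R^\circ\), with the corestricted reflexivity \(\delta\), gives a morphism of points \(j\colon (S, r_0 j, \delta) \to (R, r_0, \delta)\) over \(X\); the kernel comparison is the inclusion \(S \cap \Ker(r_0) \to \Ker(r_0)\); and since in a pointed finitely complete category the split short five lemma over arbitrary bases is equivalent to protomodularity, \(j\) is an isomorphism as soon as that comparison is. The genuine gap is that you stop exactly where the content begins: ``symmetry at the basepoint''---for every reflexive relation \(R\), that \((0,x)\in R\) implies \((x,0)\in R\)---is left unproved, and your proposed finishing move is circular relative to the statement. Bourn's fibrational argument (each fibre \(\Pt_Y(\C)\) of a protomodular category is strongly unital, hence unital, and a finitely complete category is Mal'tsev if and only if all fibres of the fibration of points are unital) \emph{is} the proof of this proposition in the cited book of Borceux and Bourn, which is all the paper itself offers (the proposition is quoted with no proof). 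Invoking it yields the Mal'tsev property, hence the whole statement, wholesale, making your \(R \cap R^\circ\) reduction superfluous; and the alternative hint of ``iterating the split short five lemma through both projections'' is not an argument. As written, the proposal is a correct reduction followed by a citation of the very theorem it was meant to prove.

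The plan can, however, be completed in the pointed setting without the fibrational machinery, and then it becomes a genuinely different (pointed) route. Pointed protomodularity implies that for every point \((A,\alpha,\beta)\) over \(B\) the pair \((\ker\alpha, \beta)\) is jointly strongly epimorphic; applied to the point \((X\times X, p_2, \Delta_X)\), whose kernel is \(\langle 1_X, 0\rangle\), this says that the category is strongly unital. Strongly unital categories are subtractive (Z.~Janelidze), which for internal relations reads, in generalized elements: if \((x,y)\in R\) and \((0,y)\in R\), then \((x,0)\in R\). Taking \(y = x\), reflexivity gives \((x,x)\in R\), so the hypothesis \((0,x)\in R\) yields \((x,0)\in R\)---exactly your missing step. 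Your split short five lemma argument then makes every reflexive relation symmetric, and the Mal'tsev property (hence transitivity) follows as you indicate. Note the trade-off: the book's fibrational proof works for arbitrary finitely complete protomodular categories, while the completed version of your route essentially exploits pointedness---harmless here, since semi-abelian categories are pointed.
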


Recall that when a category has finite limits and every reflexive relation is an equivalence relation, it is called a \defn{Mal'tsev category}. In Universal Algebra, varieties which form Mal'tsev categories are exactly the ones such that the corresponding algebraic theory contains a \defn{Mal'tsev operation}~\cite{Maltsev-Sbornik}, i.e., a ternary operation \(p\) satisfying \(p(x,y,y)=x\) and \(p(x,x,y)=y\).

For the rest of the paper, unless otherwise stated, the categories we consider are semi-abelian. Even though it may sometimes happen that weaker assumptions can be used, we preferred to present most of our results in a single abstract framework.

\subsection*{Connectors}
The notion of a \emph{connector} is standard in the approach to commutator theory in the context of Mal'tsev varieties~\cite{Smith}, which has been extended to a categorical setting in~\cite{Pedicchio}. Given two equivalence relations
\[
    \begin{tikzcd}
        R \arrow[r, "r_1", shift left = 2] \arrow[r, "r_2"', shift right = 2] & A \arrow[l, "\sigma_R" description]
    \end{tikzcd} \qquad\text{and}\qquad \begin{tikzcd}
        S \arrow[r, "s_1", shift left = 2] \arrow[r, "s_2"', shift right = 2] & A \arrow[l, "\sigma_S" description]
    \end{tikzcd}
\]
on a common object \(A\), we construct the pullback
\begin{center}
    \begin{tikzcd}
        R \times_A S \arrow[r,"p_S", shift left] \arrow[d, shift left,"p_R"] & S \arrow[l, "e_S", shift left] \arrow[d,"s_1", shift left] \\
        R \arrow[r, shift left,"r_2"] \arrow [u,"e_R", shift left] & A \arrow [u,"\sigma_S", shift left] \arrow[l, "\sigma_R", shift left]
    \end{tikzcd}
\end{center}
where \(e_R\) and \(e_S\) are the induced sections.

\begin{definition}[\cite{BG}]
    A \defn{connector} \(p\) on \((R,S)\) is a morphism \({p\colon R \times_A S \rightarrow A}\) such that \(p \circ e_R = r_1\) and \(p\circ e_S=s_2\). When such a connector exists we say that the equivalence relations \(R\) and \(S\) \defn{Smith-commute} or \defn{centralise each other}.
\end{definition}

\begin{remarks}\label{remarks connector}
    \begin{itemize}
        \item In terms of elements, we may view the connector \(p\) as a morphism which satisfies
              \begin{align*}
                  p(a,b,b)=a \quad \text{and} \quad p(a,b,b)=b.
              \end{align*}
              In other words, \(p\) is a partial Mal'tsev operation on \(A\).
        \item In a Mal'tsev category, if two equivalence relations Smith-commute, then the associated connector is necessarily unique.
        \item An internal reflexive graph \begin{tikzcd}[cramped]
                  C_1 \arrow[r, "d", shift left = 2] \arrow[r, "c"', shift right = 2] & C_0 \arrow[l, "e" description]
              \end{tikzcd} can be equipped with a groupoid structure if and only if the kernel pair relations~\(\Eq(c)\) and \(\Eq(d)\) admit a connector. Then, in our context, an internal groupoid structure on a reflexive graph is unique by the previous point. For an account on internal graphs and groupoids, we again recommend the reference~\cite[Appendix~A]{Borceux-Bourn}.
        \item Every equivalence relation on an object \(A\) Smith-commutes with the discrete relation \(\Delta_A\).
    \end{itemize}
\end{remarks}

\subsection*{(Normal) subobjects}
In what follows, we call a subobject \defn{normal} when it is the kernel of some morphism. 
In order to avoid confusion with the order intrinsic in a Heyting semilattice, we adopt the following:

\begin{convention}
    Given an object \(A\), the order on its class \(\Sub(A)\) of subobjects will be denoted with the symbol ``\(\oleq\)''. Accordingly, meets of subobjects are denoted with the symbol ``\(\omeet\)'' and joins with the symbol ``\(\ojoin\)''. Given a monomorphism \(x\colon X\rightarrowtail A\), the subobject it determines is written in the same way (by abuse of notation) or \(X\oleq A\). We write \(K\normal A\) when \(K\) is a normal subobject of \(A\).
\end{convention}

The normal subobjects in \(\HSLat\) are characterised in Section~\ref{Section Normal Subobjects}.

\subsection*{Cooperators}
In the pointed context of a semi-abelian category, there is an approach to commutators for subobjects essentially due to Huq~\cite{Huq,Borceux-Bourn} which is closer to the classical theory for groups, Lie algebras, etc.

\begin{definition}
    Given two subobjects \(x \colon X \rightarrowtail A\) and \(y\colon Y \rightarrowtail A\), a \defn{cooperator} \(\varphi\) on the pair \((X,Y)\) with respect to \(A\) is a morphism \(\varphi \colon X\times Y \rightarrow A\) such that the following diagram commutes.
    \begin{center}
        \begin{tikzcd}[row sep = large]
            X \arrow[r, "{(1_X,0)}"] \arrow[rd, "x"', tail] & X \times Y \arrow[d, "\varphi" description] & Y \arrow[l, "{(0,1_Y)}"'] \arrow[ld, "y", tail] \\
            & A
        \end{tikzcd}
    \end{center}
    When such a cooperator exists, we say that \(X\) and \(Y\) \defn{cooperate} or \defn{Huq-commute} in \(A\).
\end{definition}

As before, in our context, cooperators are necessarily unique when they exist. In the category of groups, the previous definition works as expected. In fact, it is a quick exercise to prove that two subgroups \(X\) and \(Y\) cooperate if and only if \(xy=yx\) for all \(x\in X\) and \(y\in Y\). An analysis for \(\HSLat\) is the subject of Section~\ref{section commutators in hslat}.

\begin{definition}
    When an object \(X\) Huq-commutes with itself, we call \(X\) an \defn{abelian} object.
\end{definition}

\begin{definition}\label{Def Centraliser}
    Let \(X\oleq A\) be a subobject of an object \(A\). The \defn{centraliser} \(Z_A(X)\) of \(X\) in \(A\) is the largest subobject of \(A\) which cooperates with \(X\).
\end{definition}

\begin{definition}
    The \defn{Huq-commutator} \([X,Y]^{\Huq}\) of \(X\) and \(Y\) in \(A\) is the smallest normal subobject \(N\) of \(A\) such that \(X\) and~\(Y\) cooperate in \({A}/{N}\).
\end{definition}
Given two subobjects \(X\) and \(Y\) of \(A\), we construct the diagram
\begin{center}

    \begin{tikzcd}[row sep = large]
        & X \arrow[dl,"{(1_X,0)}"'] \arrow[d, dotted] \arrow[dr,tail,"x"] & \\

        X\times Y \arrow[r,dotted] & Q & A \arrow[l, dotted,"q"']\\
        & Y \arrow[u,dotted] \arrow[ur,tail,"y"'] \arrow[ul,"{(1_Y,0)}"]
    \end{tikzcd}
\end{center}
where \(Q\) is the colimit of the outer square.

This object \([X,Y]^{\Huq}\) is the kernel of \(q\colon A \rightarrow Q\). The following helps to understand the previous definitions.

\begin{proposition}
    Given two subobjects \(X\) and \(Y\) in \(A\), they admit a cooperator if and only if \([X,Y]^{\Huq}=0\).\qed
\end{proposition}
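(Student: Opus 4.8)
The plan is to read the colimit \(Q\) as a pushout and exploit its universal property. Writing \(\iota\colon X+Y\to X\times Y\) for the canonical comparison \(\binom{(1_X,0)}{(0,1_Y)}\) and \([x,y]\colon X+Y\to A\) for the copairing of the two subobject inclusions, the outer square of the diagram exhibits \(Q\) as the pushout of \(\iota\) along \([x,y]\). I would denote by \(\psi\colon X\times Y\to Q\) and \(q\colon A\to Q\) the two pushout injections, so that \(\psi\circ\iota=q\circ[x,y]\), and recall from the paragraph preceding the statement that \([X,Y]^{\Huq}=\Ker q\). The key reformulation used throughout is that a cooperator on \((X,Y)\) in \(A\) is precisely a morphism \(\varphi\colon X\times Y\to A\) with \(\varphi\circ\iota=[x,y]\), since the two defining triangles \(\varphi\circ(1_X,0)=x\) and \(\varphi\circ(0,1_Y)=y\) combine into this single equation.

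The forward implication is then immediate from the definition of the Huq commutator. If \(X\) and \(Y\) cooperate in \(A\), then, since \(A\cong A/0\) and the zero subobject is normal, the subobject \(0\) belongs to the family of normal subobjects \(N\) for which \(X\) and \(Y\) cooperate in \(A/N\). As \([X,Y]^{\Huq}\) is by definition the smallest member of this family and \(0\) is the bottom element of \(\Sub(A)\), we conclude \([X,Y]^{\Huq}=0\).

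For the converse I would argue that \(q\) is an isomorphism. Assume \([X,Y]^{\Huq}=\Ker q=0\). In a pointed protomodular---hence in any semi-abelian---category a morphism is a monomorphism exactly when its kernel is zero, so \(q\) is monic. On the other hand, a semi-abelian category is unital, which means precisely that the comparison \(\iota\colon X+Y\to X\times Y\) is a (strong, hence regular) epimorphism; since regular epimorphisms are stable under pushout, its pushout \(q\) is a regular epimorphism as well. A morphism that is simultaneously monic and a regular epimorphism is an isomorphism, so \(q^{-1}\) exists and \(\varphi:=q^{-1}\circ\psi\colon X\times Y\to A\) satisfies \(\varphi\circ\iota=q^{-1}\circ\psi\circ\iota=q^{-1}\circ q\circ[x,y]=[x,y]\); by the reformulation above this \(\varphi\) is a cooperator.

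The step I expect to require the most care is the claim that \(q\) is a regular epimorphism, which is the crux of the converse: it rests on the unitality of semi-abelian categories (guaranteeing that \(\iota\) is a strong epimorphism) together with the pushout-stability of regular epimorphisms. Once that is in place, the combination \emph{mono \(+\) regular epi \(=\) iso} produces the cooperator with no further computation. I would also note that this argument in fact yields the stronger, standard universal property---that \(X\) and \(Y\) cooperate in \(A/N\) if and only if \([X,Y]^{\Huq}\oleq N\)---of which the proposition is exactly the case \(N=0\).
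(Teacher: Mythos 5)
Your proof is correct, and it is essentially the argument the paper has in mind: the paper states this proposition without proof (as a known fact), but its construction of \(Q\) as the colimit of the outer square and of \([X,Y]^{\Huq}\) as \(\Ker(q)\) is precisely your pushout of \(\Sigma_{X,Y}\) along \(\langle x,y\rangle\). Each ingredient you invoke is sound and consistent with the paper's toolkit: zero kernel implies mono by pointed protomodularity, \(\Sigma_{X,Y}\) is a strong (hence regular) epimorphism by unitality of semi-abelian categories, and pushout-stability of regular epimorphisms is exactly the fact the paper itself uses in the proof of Proposition~\ref{image of bemol}.
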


There exists a strong relationship between cooperators and connectors. In fact, for any equivalence relation \(R\) on \(A\) we can define its \defn{normalisation} \(N_R\) as being the subobject
\begin{center}
    \begin{tikzcd}
        N_R \arrow[r,"{\ker(r_1)}", \Dtail] & R \arrow[r,"r_2"] & A
    \end{tikzcd}
\end{center}
which is normal in \(A\).

\begin{proposition}[\cite{BG}]\label{Smith implies Huq}
    Whenever two equivalence relations \(R\) and \(S\) on a common object admit a connector, then their respective normalisations \(N_R\) and \(N_S\) admit a cooperator.\qed
\end{proposition}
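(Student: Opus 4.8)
The plan is to build an explicit cooperator $\varphi\colon N_R\times N_S\to A$ directly out of the given connector $p\colon R\times_A S\to A$, by first producing a comparison morphism $\psi\colon N_R\times N_S\to R\times_A S$ and then setting $\varphi = p\circ\psi$. Write $k_R=\ker(r_1)\colon N_R\rightarrowtail R$ and $k_S=\ker(s_1)\colon N_S\rightarrowtail S$ for the defining kernels, so that $r_1 k_R=0$ with $n_R := r_2 k_R$, and $s_1 k_S=0$ with $n_S := s_2 k_S$. The guiding idea is that $\psi$ should send a pair $(x,y)$ to the triple $(n_R(x),0,n_S(y))$ of $R\times_A S$. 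At the level of elements this gives $\varphi(x,y)=p(n_R(x),0,n_S(y))$, which by the connector identities (in element form, $p(a,b,b)=a$ and $p(b,b,c)=c$) restricts to $n_R$ on the first factor and to $n_S$ on the second, exactly the two cooperator conditions.

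To make this categorical, I would define $\psi$ through the universal property of the pullback $R\times_A S$, using the symmetry $\sigma_R\colon R\to R$ of the equivalence relation $R$ (so $r_1\sigma_R=r_2$ and $r_2\sigma_R=r_1$). The two legs are $\sigma_R k_R\pi_1\colon N_R\times N_S\to R$ and $k_S\pi_2\colon N_R\times N_S\to S$, where $\pi_1$, $\pi_2$ are the product projections. The purpose of inserting $\sigma_R$ is to align basepoints: one computes $r_2(\sigma_R k_R\pi_1)=r_1 k_R\pi_1=0$ and $s_1(k_S\pi_2)=0$, so both legs are zero after mapping into $A$ and hence agree over $A$. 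This is precisely the compatibility required by the pullback, yielding a unique $\psi$ with $p_R\psi=\sigma_R k_R\pi_1$ and $p_S\psi=k_S\pi_2$.

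It then remains to verify the two cooperator identities, where the construction pays off. Precomposing $\psi$ with $(1_{N_R},0)$ gives a morphism into $R\times_A S$ whose legs are $\sigma_R k_R$ and $0$; since the legs of a pullback are jointly monic, this morphism coincides with $e_R\sigma_R k_R$, whence
\[
\varphi\circ(1_{N_R},0)=p\circ e_R\circ\sigma_R\circ k_R=r_1\sigma_R k_R=r_2 k_R=n_R,
\]
using $p\circ e_R=r_1$. Symmetrically, precomposing with $(0,1_{N_S})$ produces the morphism $e_S k_S$, and
\[
\varphi\circ(0,1_{N_S})=p\circ e_S\circ k_S=s_2 k_S=n_S,
\]
using $p\circ e_S=s_2$. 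This shows that $\varphi$ is a cooperator on $(N_R,N_S)$, so $N_R$ and $N_S$ Huq-commute.

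The main obstacle is not any single hard computation but getting the alignment right: one must insert the symmetry $\sigma_R$ exactly so that the two legs of $\psi$ share the common basepoint $0\in A$ (rather than the mismatched endpoints $n_R(x)$ and $0$), and then recognise the restrictions $\psi\circ(1_{N_R},0)$ and $\psi\circ(0,1_{N_S})$ as the canonical sections $e_R$ and $e_S$ evaluated on $N_R$ and $N_S$ respectively. Once this identification is made, the defining connector equations $p\circ e_R=r_1$ and $p\circ e_S=s_2$ deliver the cooperator conditions immediately, and no element-chasing beyond the motivational computation is needed.
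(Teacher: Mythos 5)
Your proof is correct, and it is essentially the standard argument from \cite{BG} --- note that the paper itself states this proposition with a citation and no proof: one restricts the connector along a comparison morphism $N_R\times N_S\to R\times_A S$, twisting the $R$-leg by the symmetry so that both legs become the zero morphism over $A$, and then identifies the two restrictions along $(1_{N_R},0)$ and $(0,1_{N_S})$ with composites through $e_R$ and $e_S$ by joint monicity of the pullback projections, exactly as you do. One cosmetic remark: you write $\sigma_R$ for the symmetry $R\to R$, whereas the paper reserves $\sigma_R\colon A\to R$ for the reflexivity section; writing the symmetry as $\tau$, the one step you leave implicit --- that the $p_S$-leg of $e_R\tau k_R$ vanishes, which is needed for the joint-monicity identification --- follows from $p_S e_R=\sigma_S r_2$ (reflexivity), $r_2\tau=r_1$, $r_1k_R=0$, and the fact that in a pointed category $\sigma_S\circ 0=0$.
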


We may ask whether the converse holds: that is, whether
\begin{quote}
    If the normalisations \(N_R\) and \(N_S\) of two equivalence relations \(R\) and \(S\) Huq-commute, then \(R\) and \(S\) Smith-commute.
\end{quote}
In general, this is not the case for all semi-abelian categories, given that the category of digroups and the category of loops are counterexamples~\cite{Borceux-Bourn, HVdL}. This condition which we denote by \SH{} was studied by many authors because of its (co)-homological consequences~\cite{RVdL3,RVdL2}, its impact in the study of internal categorical structures~\cite{MFVdL} and its importance for characterising crossed modules~\cite{Janelidze, HVdL}. The condition \SH{} it is known to be true for Heyting semilattices, since those satisfy \emph{strong protomodularity}---see~\cite{B4,Rodelo:Moore}. In Section~\ref{section SSH and W}, we discuss strengthenings of \SH{} and their validity in \(\HSLat\).

\subsection*{The binary Higgins commutator}

Another commutator for subobjects follows a more universal-algebraic approach: the \emph{Higgins commutator}. It was first defined for \(\Omega\)-groups by Higgins~\cite{Higgins} and then extended to a categorical context by Mantovani and Metere~\cite{MM-NC}.

First, we consider the canonical map
\[
    \Sigma_{X,Y}\coloneq \bigl(\begin{smallmatrix}
            1_X & 0 \\
            0 & 1_Y
        \end{smallmatrix}\bigr)=(\langle1_X,0\rangle,\langle0,1_Y\rangle) \colon X + Y \to X \times Y
\]
for any two objects \(X\) and \(Y\) in a semi-abelian category. The \defn{binary cosmash product} \(X \diamond Y\) is defined as the kernel of \(\Sigma_{X,Y}\). We may check that in the category of groups, \(X \diamond Y\) is generated by \emph{words} in \(X+Y\) of the form \(xyx^{-1}y^{-1}\) with \(x\in X\) and \(y\in Y\). However, those are formal words---elements of \(X+Y\)---rather than elements of some arbitrary group \(A\) containing both \(X\) and \(Y\). This helps to understand the following definition.

\begin{definition}[\cite{MM-NC}]\label{definition higgins commutator}
    Given two subobjects \(x \colon X \rightarrowtail A\) and \(y \colon Y \rightarrowtail A\), there is an induced morphism \(\langle x, y\rangle \colon X+ Y \rightarrow A\). The \defn{Higgins commutator} \([X,Y]\) of \(X\) and \(Y\) in \(A\) is the regular image of \(X\diamond Y\) under the morphism \(\langle x, y\rangle\).

    \begin{center}
        \begin{tikzcd}[column sep = large]
            0 \arrow[r] & X \diamond Y \arrow[r, \Dtail] \arrow[d, \Dhead, dotted] & X+Y \arrow[r, "\Sigma_{X,Y}"] \arrow[d, "{\langle x, y\rangle}"] & X \times Y \\
            & {[X,Y]} \arrow[r, tail, dotted] & A
        \end{tikzcd}
    \end{center}
\end{definition}

\begin{remarks}\label{Remarks Higgins}
    \begin{itemize}
        \item An important relationship, proven in~\cite{MM-NC}, between the Huq commutator and the Higgins commutator is that for any two subobjects \(X\), \(Y\) of an object \(A\), the Huq commutator \([X,Y]^{\Huq}\) is the normal closure of the Higgins commutator \([X,Y]\).
        \item The previous point emphasises that the two commutators are not the same. However, we have that
              \begin{align*}
                  [X,Y]^{\Huq}=0 \quad\Leftrightarrow\quad [X,Y] =0.
              \end{align*}
        \item One could ask whether, in a given semi-abelian category, the Higgins commutator of two normal subobjects is always normal---as it is the case in the category of groups. It turns out that in general, the answer is no: counterexamples were first given by Cigoli in his Ph.D.\ thesis~\cite{AlanThesis}. When this nevertheless happens for a semi-abelian category, we say that the category satisfies \defn{normality of Higgins commutators}. We denote this condition by~\NH. A detailed study of it can be found in~\cite{CGrayVdL1}. In Section~\ref{section arithmetical}, we prove that semi-abelian categories without abelian objects satisfy \NH.
    \end{itemize}
\end{remarks}

It is important for the next sections to mention that the binary cosmash product \(X\diamond Y\) of two objects can be seen as the intersection of \(X \flat Y\) and \(Y\flat X\) in \(X+Y\), where \(X\flat Y\) is defined as the kernel of the morphism \(\langle 1_X,0\rangle\colon X +Y \to X\). This observation is an application of the following well-known lemma.

\begin{lemma}\label{joint kernels lemma}
    In a pointed category with finite limits, we consider \(n\) morphisms \(x_i\colon X \rightarrow Y_i\) where \(i=1, \dots, n\). This induces a morphism
    \[
        (x_1, \dots , x_n)\colon X \rightarrow Y_1 \times \dots \times Y_n.
    \]
    Then the kernel of \((x_1, \dots, x_n)\) is the intersection \(\bigomeet_{i=1}^n \Ker(x_i)\) of the kernels of the~\(x_i\).\qed
\end{lemma}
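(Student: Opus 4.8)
The plan is to prove the equality of the two subobjects of $X$ by showing that they are factored through by exactly the same morphisms, and then to invoke the elementary fact that a subobject is completely determined by the class of morphisms factoring through it. Conceptually, this is just an instance of ``limits commute with limits'' (both a kernel and a finite intersection are finite limits), but the most self-contained route is through universal properties.

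First I would record the following elementary observation in the pointed setting: a morphism $a\colon A \to Y_1 \times \dots \times Y_n$ equals the zero morphism if and only if each composite $\pi_i \circ a$ with the $i$-th product projection is zero, where $\pi_i \colon Y_1 \times \dots \times Y_n \to Y_i$. The forward implication is immediate, since the zero morphism factors through the zero object and hence so does each of its projections. For the converse, the zero morphism $A \to Y_1 \times \dots \times Y_n$ also has all of its projections equal to zero, so if $a$ and $0$ agree after composition with every $\pi_i$, the universal property of the product forces $a = 0$.

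Next I would unwind what it means to factor through each side. By definition of the kernel, a morphism $a\colon A \to X$ factors through $\Ker(x_1, \dots, x_n)$ precisely when $(x_1, \dots, x_n)\circ a = 0$. Composing with $\pi_i$ and using $\pi_i \circ (x_1, \dots, x_n) = x_i$ gives $\pi_i \circ (x_1, \dots, x_n) \circ a = x_i \circ a$, so by the observation above this condition is equivalent to $x_i \circ a = 0$ for all $i$. On the other side, $a$ factors through the intersection $\bigomeet_{i=1}^n \Ker(x_i)$ if and only if it factors through each $\Ker(x_i)$ separately, which by the universal property of kernels again means exactly $x_i \circ a = 0$ for every $i$.

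Since the two subobjects $\Ker(x_1, \dots, x_n)$ and $\bigomeet_{i=1}^n \Ker(x_i)$ are therefore factored through by precisely the same family of morphisms---those $a$ satisfying $x_i \circ a = 0$ for all $i$---they coincide as subobjects of $X$, which is the claim. The only step requiring any attention is the first one, characterising when a morphism into a product vanishes, but this is entirely formal and I expect no genuine obstacle; the remainder is a direct comparison of universal properties.
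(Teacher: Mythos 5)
Your proof is correct. The paper states this lemma without proof (it is flagged as well-known and accompanied only by a \(\square\)), and your generalised-element argument---identifying both subobjects as classified by exactly those morphisms \(a\) with \(x_i \circ a = 0\) for all \(i\), via the universal properties of product, kernel and intersection---is precisely the standard verification the authors implicitly rely on.
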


Since the Higgins commutator \([X,Y]\) of two subobjects \(X\), \(Y \oleq A\) is the regular image of \(X\diamond Y\) under \(\langle x, y\rangle\), it is then natural to ask ourselves what the regular image of \(X\flat Y\) corresponds to. The following answer extends Lemma~6.12 in~\cite{acc}.

\begin{proposition}\label{image of bemol}
    Let \(x\colon X \rightarrowtail A\) and \(y\colon Y\rightarrowtail A\) be two subobjects of an object~\(A\) in a semi-abelian category. Then the regular image of \(X \flat Y\) under the morphism \(\langle x, y\rangle\colon X +Y \rightarrow A\) is the normal closure of \(Y\) in \(X \ojoin Y\), which we denote by \(Y^X\).
\end{proposition}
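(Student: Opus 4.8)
The plan is to reduce the computation of the regular image to a statement about the regular epimorphism underlying \(\langle x,y\rangle\), and then to recognise \(X\flat Y\) as a normal closure that is transported correctly by that regular epimorphism. Write \(\iota_X\colon X\to X+Y\) and \(\iota_Y\colon Y\to X+Y\) for the coproduct injections, and factor \(\langle x,y\rangle\) as a regular epimorphism \(e\colon X+Y\twoheadrightarrow I\) followed by the inclusion \(I\rightarrowtail A\) of its regular image. Since \(x=\langle x,y\rangle\iota_X\) and \(y=\langle x,y\rangle\iota_Y\) both factor through \(I\), while \(I\) is in turn contained in any subobject of \(A\) containing both \(X\) and \(Y\), this regular image is exactly the join \(X\ojoin Y\). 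As the inclusion \(I\rightarrowtail A\) is a monomorphism, the regular image of \(X\flat Y\) under \(\langle x,y\rangle\) agrees, as a subobject of \(A\), with the regular image \(e(X\flat Y)\) under \(e\). So it suffices to prove that \(e(X\flat Y)\) is the normal closure of \(Y\) in \(X\ojoin Y\).

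The first key step is to recognise \(X\flat Y\) as the normal closure of \(Y\) in \(X+Y\). By definition \(X\flat Y=\Ker(\langle 1_X,0\rangle)\), so I would first check that \(\langle 1_X,0\rangle\colon X+Y\to X\) is the cokernel of the (split, hence monomorphic) injection \(\iota_Y\): indeed \(\langle 1_X,0\rangle\iota_Y=0\), and any morphism out of \(X+Y\) annihilating \(\iota_Y\) factors uniquely through \(\langle 1_X,0\rangle\) by restricting along \(\iota_X\) and using the universal property of the coproduct. Since in a semi-abelian category the normal closure of a monomorphism \(m\) is computed as \(\Ker(\coker(m))\), this gives \(X\flat Y=\Ker(\coker(\iota_Y))\); that is, \(X\flat Y\) is the normal closure of \(Y\) in \(X+Y\).

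It then remains to transport this along \(e\), for which I would invoke that regular epimorphisms preserve normal closures: for \(e\colon B\twoheadrightarrow C\) and a subobject \(Z\oleq B\), the regular image \(e(\overline Z)\) of the normal closure of \(Z\) equals the normal closure \(\overline{e(Z)}\) of \(e(Z)\) in \(C\). One inclusion holds because the direct image of a normal subobject along a regular epimorphism is again normal, so \(e(\overline Z)\) is a normal subobject containing \(e(Z)\) and hence contains \(\overline{e(Z)}\). For the reverse inclusion I would consider the inverse image \(e^{-1}(\overline{e(Z)})\), which is normal in \(B\) (being a pullback of a kernel) and contains \(Z\), hence contains \(\overline Z\); taking direct images and using \(e\,e^{-1}(W)=W\) for \(W\oleq C\) yields \(e(\overline Z)\oleq\overline{e(Z)}\). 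Applying this with \(Z=Y\) and \(C=X\ojoin Y\), and noting that \(e\iota_Y\) presents \(Y\) as a subobject of \(X\ojoin Y\) (its composite with \(I\rightarrowtail A\) being the monomorphism \(y\)), I conclude that \(e(X\flat Y)\) is the normal closure \(Y^X\) of \(Y\) in \(X\ojoin Y\).

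The main obstacle is exactly this preservation of normal closures by regular epimorphisms, and underlying it the fact that the direct image of a normal subobject along a regular epimorphism is normal; this is where the full strength of the semi-abelian (in particular, exact Mal'tsev) context is used. I would either cite it from the literature on normal closures in semi-abelian categories or supply the short two-inclusion argument above, taking care that direct and inverse images of subobjects behave as expected along a regular epimorphism.
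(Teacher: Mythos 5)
Your proof is correct, but it takes a genuinely different route from the paper's. The paper forms the pushout \((Q,p,q)\) of the regular epimorphism \(u\colon X+Y\twoheadrightarrow X\ojoin Y\) along \(\langle 1_X,0\rangle\), invokes Corollary~4.2 of Mantovani--Metere (i.e.\ the theory of regular pushouts in semi-abelian categories) to conclude that the induced morphism \(\phi\colon X\flat Y\to\Ker(q)\) is a regular epimorphism, and then identifies \(q\) as the cokernel of \(Y\rightarrowtail X\ojoin Y\), so that \(\Ker(q)=Y^X\). You instead observe that \(\langle 1_X,0\rangle=\coker(\iota_Y)\), hence \(X\flat Y=\Ker(\coker(\iota_Y))\) is the normal closure of \(Y\) in \(X+Y\) --- which is precisely the special case of the proposition where \(A=X+Y\) and \(\langle x,y\rangle=1_{X+Y}\) --- and then transport it along \(e\) with a self-contained lemma asserting that regular epimorphisms send normal closures to normal closures. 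Your two-inclusion proof of that lemma is sound: it uses only that in a semi-abelian category direct images of kernels along regular epimorphisms are again kernels (a fact the paper itself uses, citing Janelidze--M\'arki--Tholen, in the proof of Theorem~\ref{Thm ToN via Pushouts}), that inverse images of kernels are kernels (\(e^{-1}(\Ker g)=\Ker(g\circ e)\)), and that \(e(e^{-1}(W))=W\) for a regular epimorphism in a regular category; the surrounding bookkeeping --- identifying the regular image of \(\langle x,y\rangle\) with \(X\ojoin Y\), and noting that \(e\iota_Y\) is monic because its composite with \(X\ojoin Y\rightarrowtail A\) is \(y\) --- is also in order. In effect the two arguments encode the same data, since your \(\overline{e(Y)}=\Ker(\coker(e\iota_Y))\) is exactly the paper's \(\Ker(q)\); what differs is the engine: the paper makes a one-shot appeal to a cited result on kernels in pushouts of regular epimorphisms, whereas you factor the statement through a reusable preservation lemma proved from first principles via the direct/inverse image calculus, which is arguably more elementary and of independent interest (a functorial variant of such preservation appears in the paper's Lemma~\ref{Lemma Preserves Normal Closure} in a different context).
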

\begin{proof}
    We consider the commutative diagram
    \begin{center}
        \begin{tikzcd}[column sep = large]
            X \flat Y \arrow[r, \Dtail,"k"] \arrow[d, \Dhead, dotted, "\phi"] & X+Y \arrow[r, "{\langle 1_X,0\rangle}",\Dhead] \arrow[d,\Dhead, "u"] & X \arrow[d,\Dhead,"p"] \\
            L \arrow[r, \Dtail,"l"] & X \ojoin Y \arrow[r,\Dhead, "q"] & Q
        \end{tikzcd}
    \end{center}
    where \(k\) is the kernel of \(\langle 1_X,0\rangle\), \(u\) is the regular epimorphism from the factorisation of \(\langle x, y\rangle\), \((Q,p,q)\) it the pushout of the morphisms \(u\) and \(\langle 1_X,0\rangle\), and \(l\) is the kernel of~\(q\). From those data, we know that \(p\) and \(q\) are regular epimorphisms (pushouts of regular epimorphisms are regular epimorphisms) and that there exists a unique~\(\phi\) making the left hand square commute.

    Applying~\cite[Corollary~4.2]{MM-NC}, we can deduce that \(\phi\) is a regular epimorphism. In other words, \(L\) is the regular image of \(X\flat Y\) under \(u\) and thus under \(\langle x, y\rangle\).

    In order to conclude, it remains to prove that \(q\) is the cokernel of the inclusion \(Y \rightarrowtail X\ojoin Y\) which is routine.
\end{proof}

\begin{corollary}\label{inequality of Higgins}
    Let \(x\colon X \rightarrowtail A\) and \(y\colon Y\rightarrowtail A\) be two subobjects of an object \(A\) in a semi-abelian category. Then we have that
    \begin{align*}
        [X,Y] \oleq X^Y \omeet Y^X.
    \end{align*}
\end{corollary}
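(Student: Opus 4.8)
The plan is to reduce everything to the intersection description of the cosmash product recalled just before Lemma~\ref{joint kernels lemma}, together with Proposition~\ref{image of bemol} and the monotonicity of regular images. Concretely, $X \diamond Y$ sits inside $X+Y$ as the intersection $(X\flat Y) \omeet (Y \flat X)$; in particular, as subobjects of $X+Y$, we have $X \diamond Y \oleq X \flat Y$ and $X \diamond Y \oleq Y \flat X$. Since $[X,Y]$ is by definition the regular image of $X \diamond Y$ along $\langle x, y\rangle\colon X+Y \to A$, the whole statement will follow once we know the regular images of $X \flat Y$ and $Y \flat X$ and that taking regular images respects the order on subobjects.

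First I would record the (standard) fact that, in a regular category, taking the regular image along a fixed morphism $f$ defines an order-preserving map on posets of subobjects: if $Z \oleq W \oleq X+Y$ then, writing $f \circ (Z \rightarrowtail X+Y)$ as a composite through $W$, the image of $f\circ(Z\rightarrowtail X+Y)$ factors through the image of $f\circ(W \rightarrowtail X+Y)$, so that the former is $\oleq$ the latter in $\Sub(A)$. Next I would apply Proposition~\ref{image of bemol} twice. Its statement gives that the regular image of $X \flat Y$ along $\langle x,y\rangle$ is $Y^X$; interchanging the roles of $X$ and $Y$ (so that now $Y \flat X$ is the kernel of $\langle 0, 1_Y\rangle\colon X+Y \to Y$) yields that the regular image of $Y\flat X$ along $\langle x,y\rangle$ is $X^Y$.

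Combining these ingredients finishes the argument. From $X \diamond Y \oleq X \flat Y$ and monotonicity we obtain $[X,Y] \oleq Y^X$, while from $X \diamond Y \oleq Y \flat X$ we obtain $[X,Y] \oleq X^Y$; as $[X,Y]$ lies below both, it lies below their meet, giving $[X,Y] \oleq X^Y \omeet Y^X$. I do not expect a genuine obstacle here, as each step is either a definition or an already-established proposition; the only point requiring care is bookkeeping, namely making sure the two instances of Proposition~\ref{image of bemol} are matched to the correct factors (that $X \flat Y$ produces $Y^X$ and $Y \flat X$ produces $X^Y$, rather than the reverse) and that the intersection $X \diamond Y = (X \flat Y)\omeet(Y\flat X)$ is exactly the instance of Lemma~\ref{joint kernels lemma} applied to $\Sigma_{X,Y} = (\langle 1_X,0\rangle, \langle 0, 1_Y\rangle)$.
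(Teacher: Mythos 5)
Your proposal is correct and is essentially the paper's own proof: the paper disposes of the corollary in one line, ``the regular image of an intersection is always a subobject of the intersection of the regular images,'' and your monotonicity argument (applied to \(X\diamond Y=(X\flat Y)\omeet(Y\flat X)\), with the two instances of Proposition~\ref{image of bemol} identifying the images as \(Y^X\) and \(X^Y\)) is exactly the standard verification of that fact. Your bookkeeping---\(X\flat Y\) yielding \(Y^X\) and \(Y\flat X\) yielding \(X^Y\)---matches the paper.
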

\begin{proof}
    The regular image of an intersection is always a subobject of the intersection of the regular images.
\end{proof}

We regain the following characterisation of normality in terms of the Higgins commutator~\cite{MM-NC,HVdL}.

\begin{corollary}\label{Corollary MM-NC}
    Let \(X \oleq A\) be a subobject of an object \(A\) in a semi-abelian category. Then the normal closure \(X^A\) of \(X\) in \(A\) is \(X\ojoin [X,A]\oleq A\), so that \(X\normal A\) if and only if \([X,A]\oleq X\).\noproof
\end{corollary}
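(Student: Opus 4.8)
The plan is to reduce the whole statement to Proposition~\ref{image of bemol} by treating \(A\) itself as one of the two subobjects. Concretely, I would apply that proposition to the pair consisting of the identity \(1_A\colon A \rightarrowtail A\) and the given inclusion \(x\colon X \rightarrowtail A\). Since \(A \ojoin X = A\), the proposition identifies the regular image of \(A \flat X = \Ker(\langle 1_A, 0\rangle\colon A + X \to A)\) under \(\langle 1_A, x\rangle\) with the normal closure of \(X\) in \(A\), that is, with \(X^A\). The corollary then comes down to computing this regular image.

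The key step is the decomposition \(A \flat X = X \ojoin (A \diamond X)\) inside \(\Sub(A+X)\). To obtain it, I would observe that the coprojection \(\iota_X\colon X \to A + X\) lands in \(A \flat X\) (as \(\langle 1_A, 0\rangle\iota_X = 0\)) and splits the restriction to \(A \flat X\) of the map \(\langle 0, 1_X\rangle\colon A + X \to X\). The kernel of that restriction is \((A \flat X) \omeet (X \flat A) = A \diamond X\), the last equality being the characterisation of the cosmash product as an intersection recalled above (an instance of Lemma~\ref{joint kernels lemma}). Hence there is a split short exact sequence \(0 \to A \diamond X \to A \flat X \to X \to 0\). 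Protomodularity then forces the kernel inclusion together with the section \(\iota_X\) to be jointly strongly epimorphic, so their join is all of \(A \flat X\); this yields the decomposition.

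With the decomposition in hand, I would take regular images along \(\langle 1_A, x\rangle\). As the direct-image operation is a left adjoint it preserves joins, and the two pieces are easy to track: the regular image of \(X = \iota_X(X)\) is \(x(X) = X\), and the regular image of \(A \diamond X\) is by definition the Higgins commutator \([A,X]\), which equals \([X,A]\) by symmetry of the cosmash product. This gives \(X^A = X \ojoin [X,A]\). The stated equivalence is then immediate: a subobject is normal exactly when it coincides with its normal closure, so \(X \normal A\) holds precisely when \(X = X \ojoin [X,A]\), i.e.\ when \([X,A]\oleq X\).

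The main obstacle I anticipate is the decomposition \(A \flat X = X \ojoin (A \diamond X)\). The inclusion \(X \ojoin (A \diamond X) \oleq A \flat X\) is immediate, and—combined with \(A^X = A\) in Corollary~\ref{inequality of Higgins}—it already delivers the easy half \(X \ojoin [X,A] \oleq X^A\). It is the reverse inclusion that carries the content, and this is exactly where protomodularity, in the guise of jointly strongly epimorphic \((\text{kernel}, \text{section})\) pairs, does the real work.
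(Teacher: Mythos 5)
Your proposal is correct and takes essentially the route the paper intends: the corollary is stated there without proof (citing the literature) as a result regained from Proposition~\ref{image of bemol}, applied exactly as you do to the pair \((1_A,x)\), so that \(X^A\) is the regular image of \(A\flat X\) under \(\langle 1_A,x\rangle\). Your key decomposition \(A\flat X = X \ojoin (A\diamond X)\), obtained from the split short exact sequence \(0\to A\diamond X\to A\flat X\to X\to 0\) together with the protomodular fact that kernel and section are jointly strongly epimorphic, is the standard way to complete this derivation, and the remaining steps (direct images preserve joins as left adjoints, \([A,X]=[X,A]\) by symmetry of the cosmash product, and \(X\normal A\) iff \(X=X^A\)) all check out.
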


\subsection*{Higher Higgins commutators}

In~\cite{Smash,HVdL}, the authors define the \defn{\(n\)-ary cosmash product} \(X_1 \diamond \dots \diamond X_n\) of \(n\) objects \(X_1\), \dots, \(X_n\) as the kernel of the morphism
\begin{align*}
    \Sigma_{X_1,\dots,X_n}\colon X_1+ \dots + X_n \rightarrow \displaystyle \prod_{k=1}^n\coprod_{\substack{j=1 \\j\neq k}}^n X_j
\end{align*}
determined by
\begin{align*}
    \pi_{\coprod_{\substack{j\neq k}}X_j}\circ \Sigma_{X_1,\dots,X_n} \circ \iota_{X_l} = \begin{cases}
                                                                                              \iota_{X_l} & \text{if \(l \neq k\)} \\
                                                                                              0           & \text{if \(l=k\).}
                                                                                          \end{cases}
\end{align*}
One could think that iterating the binary cosmash product suffices to define the \(n\)-ary one. However, in general \((X_1\diamond X_2)\diamond X_3\) is neither isomorphic to \(X_1 \diamond (X_2 \diamond X_3)\), nor to \(X_1\diamond X_2\diamond X_3\). A category where always \((X_1\diamond X_2)\diamond X_3\cong X_1\diamond X_2\diamond X_3\), is said to be \defn{cosmash associative}~\cite{Cosmash}. With Corollary~\ref{Cor Cosmash Assoc} below, we prove that the category of Heyting semilattices does not satisfy this condition.

\begin{definition}\label{Def Higher Higgins}
    Given \(n\) subobjects \(x_i \colon X_i \rightarrowtail A\), there is a universally induced morphism \(\langle x_1, \dots , x_n \rangle \colon X_1 + \dots + X_n \rightarrow A\). The \defn{Higgins commutator} \([X_1, \dots , X_n]\) of \(X_1, \dots , X_n\) in \(A\) is the regular image of \(X_1 \diamond \dots \diamond X_n\) under the morphism \(\langle x_1, \dots , x_n \rangle\).

    \begin{center}
        \begin{tikzcd}[column sep = large]
            0 \arrow[r] & X_1 \diamond \dots \diamond X_n \arrow[r, \Dtail] \arrow[d, \Dhead, dotted] & X_1 + \dots + X_n \arrow[r, "\Sigma_{X_1,\dots ,X_n}"] \arrow[d, "{\langle x_1,\dots , x_n \rangle }"] & \displaystyle \prod_{k=1}^n\coprod_{\substack{j=1 \\j\neq k}}^n X_j \\
            & {[X_1,\dots ,X_n ]} \arrow[r, tail, dotted] & A
        \end{tikzcd}
    \end{center}
\end{definition}

Many basic properties of this commutator can be found in~\cite[Proposition~2.21]{HVdL}. We insist on the fact that, similarly to the \(n\)-ary cosmash product, the \(n\)-ary Higgins commutator is not built up out of iterated binary ones, unless some extra conditions hold: see~\cite{SVdL3}.

We observe that again applying Lemma~\ref{joint kernels lemma}, we deduce that the \(n\)-ary cosmash product \(X_1 \diamond \dots \diamond X_n\) is the intersection
\begin{align*}
    \bigomeet_{i=1}^n ( \coprod_{j\neq i}^n X_j)\flat X_i.
\end{align*}

Variations on the techniques we used before yield:

\begin{proposition}\label{image of n-bemol}
    Given \(n\) subobjects \(x_i\colon X_i \rightarrowtail A\) in a semi-abelian category, the regular image of \(( \coprod_{j\neq i}^n X_j)\flat X_i\) under the morphism \(\langle x_1, \dots, x_n \rangle\) corresponds to \({X_i}^{\bigojoin_k X_k}\), the normal closure of \(X_i\) in \(X_1 \ojoin \cdots \ojoin X_n\).\noproof
\end{proposition}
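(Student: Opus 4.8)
The plan is to re-run the argument of Proposition~\ref{image of bemol}, now with the coproduct $W\coloneq\coprod_{j\neq i}X_j$ in the role of $X$ and $X_i$ in the role of $Y$, keeping in mind that $(\coprod_{j\neq i}X_j)\flat X_i=\Ker(\langle 1_W,0\rangle\colon W+X_i\to W)$ and that $W+X_i=X_1+\dots+X_n$.

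First I would factor the universal comparison map $\langle x_1,\dots,x_n\rangle\colon W+X_i\to A$ as a regular epimorphism $u\colon W+X_i\twoheadrightarrow\bigojoin_k X_k$ followed by the inclusion $\bigojoin_k X_k\rightarrowtail A$, so that the regular image of $W\flat X_i$ under $\langle x_1,\dots,x_n\rangle$ coincides, as a subobject of $\bigojoin_k X_k$, with its regular image under $u$. I would then form the pushout $(Q,p,q)$ of $u$ along the split epimorphism $\langle 1_W,0\rangle\colon W+X_i\to W$ and set $l\coloneq\Ker(q)$. Since pushouts of regular epimorphisms are regular epimorphisms, both $p$ and $q$ are regular epimorphisms. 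Writing $k\coloneq\Ker(\langle 1_W,0\rangle)$, whose domain is $W\flat X_i$ by definition, we have $q\circ u\circ k=p\circ\langle 1_W,0\rangle\circ k=0$, so $u\circ k$ factors uniquely through $l$ via a morphism $\phi\colon W\flat X_i\to L$.

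The heart of the matter, exactly as in the binary case, is to prove that $\phi$ is a regular epimorphism. For this I would observe that
\[
W\flat X_i\rightarrowtail W+X_i\twoheadrightarrow W \qquad\text{and}\qquad L\rightarrowtail\bigojoin_k X_k\twoheadrightarrow Q
\]
are short exact sequences (the first because $\langle 1_W,0\rangle$ is a split epimorphism with kernel $W\flat X_i$, the second because $q$ is a regular epimorphism with kernel $l$), that $(\phi,u,p)$ is a morphism between them, and that $u$ and $p$ are regular epimorphisms; then \cite[Corollary~4.2]{MM-NC} yields that $\phi$ is a regular epimorphism. Hence $L$ is the regular image of $W\flat X_i$ under $u$, and therefore under $\langle x_1,\dots,x_n\rangle$. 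This is the only genuinely delicate step, and it is entirely parallel to the proof of Proposition~\ref{image of bemol}; the sole new feature is that the coproduct $W=\coprod_{j\neq i}X_j$, rather than its image $\bigojoin_{j\neq i}X_j$ in $A$, must play the role taken there by $X$, which is precisely why Proposition~\ref{image of bemol} cannot simply be invoked.

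It then remains to identify the subobject $L=\Ker(q)$. Here I would use that $\langle 1_W,0\rangle$ is the cokernel of the coproduct injection $\iota_{X_i}\colon X_i\to W+X_i$, together with the elementary fact that the pushout of a cokernel $\coker(h)$ along a morphism $f$ is $\coker(f\circ h)$. Taking $h=\iota_{X_i}$ and $f=u$, this shows that $q$ is the cokernel of $u\circ\iota_{X_i}$, which is nothing but the inclusion $X_i\rightarrowtail\bigojoin_k X_k$ (the corestriction of $x_i$ to the join). Since the kernel of the cokernel of a subobject inclusion is its normal closure, we conclude that $L=\Ker(q)={X_i}^{\bigojoin_k X_k}$, the normal closure of $X_i$ in $X_1\ojoin\cdots\ojoin X_n$, as claimed.
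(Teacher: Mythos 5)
Your proposal is correct and is precisely the argument the paper intends: the statement is left unproved there with the remark that ``variations on the techniques we used before yield'' it, and your proof is exactly that variation of Proposition~\ref{image of bemol}, replacing \(X\) by \(W=\coprod_{j\neq i}X_j\), invoking \cite[Corollary~4.2]{MM-NC} for the same pushout square, and spelling out the cokernel identification that the binary proof calls routine. Your observation that \(W\), rather than its image \(\bigojoin_{j\neq i}X_j\), must play the role of \(X\) (so that the binary proposition cannot simply be cited) is exactly the point of the ``variation''.
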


\begin{corollary}\label{inequality of n-Higgins}
    Given \(n\) subobjects \(x_i\colon X_i \rightarrowtail A\) in a semi-abelian category, we have that \([X_1,\dots ,X_n ]\oleq X_1^{\bigojoin_k X_k} \omeet \cdots \omeet X_n^{\bigojoin_k X_k}\).\noproof
\end{corollary}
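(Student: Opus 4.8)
The plan is to mimic the proof of the binary Corollary~\ref{inequality of Higgins}, simply replacing the single inclusion by the $n$ inclusions coming from the intersection decomposition of the $n$-ary cosmash product, and then intersecting the resulting upper bounds. First I would recall the description established just above the statement, namely that
\[
    X_1 \diamond \cdots \diamond X_n = \bigomeet_{i=1}^n \bigl( \coprod_{j\neq i} X_j \bigr)\flat X_i,
\]
so that in particular $X_1 \diamond \cdots \diamond X_n \oleq (\coprod_{j\neq i} X_j)\flat X_i$ as subobjects of $X_1 + \cdots + X_n$, for each index $i \in \{1, \dots, n\}$.

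By Definition~\ref{Def Higher Higgins}, the Higgins commutator $[X_1, \dots, X_n]$ is the regular image of the left-hand side under the induced morphism $\langle x_1, \dots, x_n \rangle \colon X_1 + \cdots + X_n \to A$. Since taking the regular image along a fixed morphism is order-preserving on subobjects --- this is precisely the ingredient \emph{``the regular image of an intersection is always a subobject of the intersection of the regular images''} already invoked in the binary case --- each of the inclusions above yields an inclusion of $[X_1, \dots, X_n]$ into the regular image of $(\coprod_{j\neq i} X_j)\flat X_i$ under $\langle x_1, \dots, x_n \rangle$. I would then apply Proposition~\ref{image of n-bemol}, which identifies this $i$-th regular image with the normal closure $X_i^{\bigojoin_k X_k}$ of $X_i$ in $X_1 \ojoin \cdots \ojoin X_n$. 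This gives $[X_1, \dots, X_n] \oleq X_i^{\bigojoin_k X_k}$ for every $i$, and intersecting over all $i$ produces exactly
\[
    [X_1, \dots, X_n] \oleq X_1^{\bigojoin_k X_k} \omeet \cdots \omeet X_n^{\bigojoin_k X_k},
\]
as claimed.

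I do not expect a genuine obstacle here: the only non-formal ingredient is the monotonicity of regular images (equivalently, that the regular image of a meet embeds into the meet of the regular images), which holds in any category admitting \((\text{regular epi}, \text{mono})\)-factorisations, hence in any semi-abelian category. All the real content has been front-loaded into Proposition~\ref{image of n-bemol} and the intersection description of the cosmash product, both of which I may assume; the corollary is then a one-line consequence in the same spirit as its binary predecessor. If anything requires a word of care, it is only the bookkeeping that the meet of the $n$ subobjects $(\coprod_{j\neq i} X_j)\flat X_i$ really is the cosmash product via Lemma~\ref{joint kernels lemma}, but this too has already been recorded above.
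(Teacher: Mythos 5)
Your proof is correct and is exactly the argument the paper intends: the corollary is left unproved (\noproof) precisely because it follows from the intersection description of the \(n\)-ary cosmash product, monotonicity of regular images, and Proposition~\ref{image of n-bemol}, in the same way that Corollary~\ref{inequality of Higgins} follows from Proposition~\ref{image of bemol}. Nothing is missing.
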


\subsection*{Points and split extensions}

Three central notions in semi-abelian categories which will appear throughout this paper are \emph{split extensions}, \emph{points} and \emph{kernel functors}. We fix some notation.

Let \(X\) and \(B\) be objects of a semi-abelian category \(\C\); a \defn{split extension} of \(B\) by \(X\) is a diagram
\begin{equation}\label{eq:split_ext}
    \begin{tikzcd}
        0\ar[r]
        &X\arrow [r, "\kappa"]
        &A \arrow[r, shift left, "\alpha"] &
        B\ar[r]\ar[l, shift left, "\beta"]
        &0
    \end{tikzcd}
\end{equation}
in \(\C\) such that \(\alpha \circ \beta = 1_B\) and \((X,\kappa)\) is the kernel of \(\alpha\). Morphisms of split extensions are morphisms of extensions that commute  with the sections. We write \(\SpltExt_{\C}(X)\) for the category of split extensions in \(\C\) with kernel \(X\). 

We write \(\Pt_B(\C)\) for the category of \defn{points} over \(B\) whose objects are triples \((A,\alpha,\beta)\) where \(A\) is an object in \(\C\) and \(\alpha\colon A\rightarrow B\) is a split epimorphism with a given section \(\beta\), and whose morphisms are defined as expected. The functor \(\Ker_B\colon \Pt_B(\C) \to \C\) sending a point \((A,\alpha,\beta)\) to the kernel of \(\alpha\) plays an important role in the literature in general and in this paper. We call this functor the \defn{kernel functor} or \defn{change-of-base functor of the fibration of points}.

\begin{remark}\label{Remark Split Extension}
    In accordance with Theorem~\ref{BouJa Charact}, whenever we have a split extension of Heyting semilattices \eqref{eq:split_ext} as above, any element \(a\) of \(A\) can be written as
    \[
        a=((a\iimplies \beta\alpha(a))\iimplies \beta\alpha(a))\wedge \bigl(((a\iimplies \beta\alpha(a))\iimplies \beta\alpha(a))\iimplies a\bigr)
    \]
    where \(a\iimplies \beta\alpha(a)\) and\(((a\iimplies \beta\alpha(a))\iimplies \beta\alpha(a))\iimplies a\) are in \(X\), while \(\beta\alpha(a)\in B\). In other words, we may write \(a=(x_1\iimplies b)\wedge x_2\) for some \(x_1\), \(x_2\in X\) and \(b\in B\). Thus, viewing \(X\) and \(B\) as subobjects of \(A\), we see that \(A\) is generated by \(X\) and \(B\).
\end{remark}

\section{Arithmetical categories}\label{section arithmetical}

Arithmetical categories were first defined by Pedicchio in~\cite{Pedicchio2} as follows:

\begin{definition}
    An exact Mal'tsev category \(\C\) with coequalisers is said to be \defn{arithmetical} if for any object \(X\) the lattice of equivalence relations on \(X\) is distributive.
\end{definition}

Later, Bourn dropped the existence of coequalisers~\cite{Bourn2001b}. In this paper, when we discuss arithmetical categories we are mainly thinking about categories without abelian objects, also known as \textit{antiadditive} categories in the unital context~\cite{Bourn2001b}. The reason is the following result, which is probably known---for which we could not locate an explicit proof in the literature.

\begin{theorem}\label{theorem arithm no ab obj}
    Let \(\C\) be a semi-abelian category. The following are equivalent:
    \begin{tfae}
        \item \(\C\) is arithmetical,
        \item the category of groupoids in \(\C\) is equivalent to the category of reflexive relations in \(\C\),
        \item the only abelian object of \(\C\) is the zero object.
    \end{tfae}
    \begin{proof}
        (i) {\(\Leftrightarrow\)} (ii): By Pedicchio~\cite{Pedicchio2}.

        (ii) {\(\To\)} (iii): We present an alternative proof to the one given in~\cite{Bourn2001b} (See Remark 3.19). Let \(A\) be an abelian object in \(\C\). This implies that there is a groupoid structure on \(\begin{tikzcd}[cramped]
            A \arrow[r, shift left = 2] \arrow[r, shift right = 2] & 0 \arrow[l]
        \end{tikzcd}\). By hypothesis, we then have that the zero morphism \(A\rightarrow 0\) is jointly monomorphic with itself. This implies that \(A\) is isomorphic to the zero object.

        (iii) {\(\To\)} (ii): Let
        \begin{equation*}
            G=(\begin{tikzcd}[cramped]
                C_1 \arrow[r, "d", shift left = 2] \arrow[r, "c"', shift right = 2] & C_0 \arrow[l, "e" description]
            \end{tikzcd})
        \end{equation*}
        be a groupoid. We want to prove that it is a relation. We know that \(G\) is a reflexive graph such that the kernel pairs \(\Eq(c)\) and~\(\Eq(d)\) commute in the sense of Smith (Remarks~\ref{remarks connector}). Using Proposition~\ref{Smith implies Huq}, this implies that the kernels \(\Ker(c)\) and \(\Ker(d)\) of \(c\) and \(d\) commute in the sense of Huq. By monotonicity of the Huq commutator, we then have that
        \begin{align*}
            [\Ker(c) \omeet \Ker(d), \Ker(c) \omeet \Ker(d)]=0,
        \end{align*}
        which is nothing else than saying that \(\Ker(c)\omeet \Ker(d)\) is an abelian object in \(\C\). By hypothesis, it is then the zero object. This proves that \(c\) and \(d\) are jointly monomorphic (Lemma~\ref{joint kernels lemma}), in other words \(G\) is a relation.
    \end{proof}
\end{theorem}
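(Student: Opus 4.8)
The plan is to treat the equivalence (i) $\Leftrightarrow$ (ii) as supplied by Pedicchio's theorem~\cite{Pedicchio2} and to concentrate all the work on the passage between (ii) and (iii), where the semi-abelian commutator machinery recalled above does the job. Concretely, I would prove (ii) $\To$ (iii) and (iii) $\To$ (ii) directly, each time exploiting the fact that a reflexive graph underlies a groupoid precisely when its two kernel pair relations admit a connector (Remarks~\ref{remarks connector}).

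For (ii) $\To$ (iii), I would argue that an abelian object gives rise to a groupoid which (ii) then forces to be degenerate. The first step is to identify the data: a reflexive graph with object of objects $0$ is an object $A$ together with the parallel pair $A \to 0$ and the section $0 \to A$, and a groupoid structure on it is the same thing as an internal group structure on $A$ (composition being a map $A \times A \to A$ since the pullback over $0$ is a product). In a Mal'tsev, hence in our semi-abelian, setting such an internal group is automatically commutative by an Eckmann--Hilton argument, so a groupoid structure on $A \rightrightarrows 0$ is exactly the datum of $A$ being abelian. Thus any abelian object $A$ carries a groupoid structure on $A \rightrightarrows 0$; applying (ii), this groupoid is a relation, meaning that the parallel pair $A \to 0$ is jointly monomorphic. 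As both legs coincide with the unique morphism $A \to 0$, joint monomorphy says that $A \to 0$ is itself a monomorphism, and comparing $\text{id}_A$ with the zero endomorphism $A \to 0 \to A$ then forces $A \cong 0$.

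For (iii) $\To$ (ii), I would run essentially the same chain in reverse, starting from an arbitrary groupoid $G = (C_1 \rightrightarrows C_0)$ and showing it is a relation. Being a groupoid, its kernel pair relations $\Eq(c)$ and $\Eq(d)$ admit a connector, i.e. they Smith-commute. Feeding this into Proposition~\ref{Smith implies Huq}, the normalisations $\Ker(c)$ and $\Ker(d)$ Huq-commute. By monotonicity of the Huq commutator, the self-commutator of the intersection vanishes, so that $\Ker(c) \omeet \Ker(d)$ is an abelian object; hypothesis (iii) forces it to be $0$. Finally, by Lemma~\ref{joint kernels lemma} the vanishing of this joint kernel is exactly joint monomorphy of the pair $(c,d)$, so $G$ is a relation, as required.

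The main obstacle I anticipate is the bookkeeping in (ii) $\To$ (iii): making precise that \emph{abelian object} and \emph{groupoid structure on} $A \rightrightarrows 0$ encode the same data. One must either invoke the Eckmann--Hilton-type principle that internal (monoids, hence) groups in a Mal'tsev category are abelian groups, or argue directly that an abelian structure on $A$ supplies the connector required on the total kernel pair $A \times A$ of $A \to 0$ with itself. Once this identification is secured, the remainder is a matter of chaining the already-established Smith-implies-Huq passage with the joint-kernels lemma, and I expect no further difficulty.
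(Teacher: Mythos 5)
Your proposal is correct and follows essentially the same route as the paper: (i) \(\Leftrightarrow\) (ii) is delegated to Pedicchio, (ii) \(\To\) (iii) degenerates the groupoid \(A \rightrightarrows 0\) arising from an abelian object via joint monomorphy of the zero morphism with itself, and (iii) \(\To\) (ii) is the identical chain Smith-commuting kernel pairs \(\To\) Huq-commuting kernels \(\To\) monotonicity \(\To\) \(\Ker(c)\omeet\Ker(d)\) abelian hence zero \(\To\) joint-kernels lemma. The only difference is that you spell out, via the Eckmann--Hilton-type fact that internal unital magmas in a Mal'tsev category are abelian groups, why an abelian object yields a groupoid structure on \(A \rightrightarrows 0\) --- a step the paper asserts without justification --- which is a welcome clarification rather than a deviation.
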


In Universal Algebra, varieties with distributive lattices of congruences were already studied. The following proposition is a useful characterisation due to Pixley.

\begin{proposition}[\cite{Pixley}]\label{pixley}
    A variety is arithmetical if and only if the corresponding algebraic theory contains a ternary operation \(p\) for which the equations
    \begin{align*}
        p(x,y,y) = x \qquad
        p(x,x,y) = y \qquad
        p(x,y,x) = x
    \end{align*}
    hold.\noproof
\end{proposition}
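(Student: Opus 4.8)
The plan is to prove the two implications separately, treating the universal-algebraic statement as a statement about congruence lattices of free algebras.

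First I would handle the easier ``if'' direction. Given a ternary term $p$ satisfying the three Pixley identities, the two identities $p(x,y,y)=x$ and $p(x,x,y)=y$ already exhibit $p$ as a Mal'tsev operation, so the variety is congruence permutable (a Mal'tsev category). To obtain arithmeticity it then remains to establish congruence distributivity, and since one inclusion is automatic it suffices to prove $\alpha\cap(\beta\vee\gamma)\subseteq(\alpha\cap\beta)\vee(\alpha\cap\gamma)$ for congruences $\alpha$, $\beta$, $\gamma$ on an arbitrary algebra. Using permutability to rewrite $\beta\vee\gamma$ as $\beta\circ\gamma$, I would take $(a,b)\in\alpha\cap(\beta\circ\gamma)$ with a witness $c$ satisfying $a\mathrel\beta c\mathrel\gamma b$, and evaluate the single element $d=p(a,c,b)$. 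A direct computation with the three identities gives $d\mathrel\alpha a$, $d\mathrel\alpha b$, $d\mathrel\gamma a$ and $d\mathrel\beta b$, whence $a\mathrel{(\alpha\cap\gamma)}d\mathrel{(\alpha\cap\beta)}b$ and $(a,b)$ lies in the required join.

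For the ``only if'' direction I would extract the term from the free algebra $F=F(x,y,z)$ on three generators. The key reformulation is that each Pixley identity is equivalent to a congruence membership on $F$: writing $\phi=\mathrm{Cg}(y,z)$, $\psi=\mathrm{Cg}(x,y)$ and $\chi=\mathrm{Cg}(x,z)$ for the principal congruences, the quotient maps $F\to F/\phi\cong F(x,y)$, $F\to F/\psi\cong F(x,z)$ and $F\to F/\chi\cong F(x,y)$ send a term $t$ to $t(x,y,y)$, $t(x,x,z)$ and $t(x,y,x)$ respectively. Hence an element $p\in F$ realises the three identities exactly when $(p,x)\in\phi$, $(p,z)\in\psi$ and $(p,x)\in\chi$; equivalently, when $(x,z)\in(\phi\cap\chi)\circ\psi$. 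Since $x\mathrel\psi y\mathrel\phi z$ gives $(x,z)\in\phi\vee\psi$ while trivially $(x,z)\in\chi$, distributivity yields $(x,z)\in\chi\cap(\phi\vee\psi)=(\chi\cap\phi)\vee(\chi\cap\psi)\subseteq(\phi\cap\chi)\vee\psi$, and permutability rewrites this join as the composite $(\phi\cap\chi)\circ\psi$, producing the desired $p$.

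The main obstacle is precisely this last piece of lattice bookkeeping: one must apply the distributive law and then absorb $\chi\cap\psi$ into $\psi$ in just the right way so that, once permutability turns the join into a composite, a single element $p$ simultaneously witnesses all three memberships. A naive or differently bracketed manipulation would only yield the three equations on separate elements, which is useless. I would finally note that this is Pixley's classical theorem, so the argument is essentially a transcription of the standard proof into the present notation.
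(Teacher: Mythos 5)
Your proposal is correct: the evaluation $d=p(a,c,b)$ does yield $a\mathrel{(\alpha\cap\gamma)}d\mathrel{(\alpha\cap\beta)}b$ exactly as claimed, and in the converse direction the computation $(x,z)\in\chi\cap(\phi\vee\psi)=(\chi\cap\phi)\vee(\chi\cap\psi)\subseteq(\phi\cap\chi)\vee\psi=(\phi\cap\chi)\circ\psi$ on the free algebra $F(x,y,z)$, together with the identification of the quotients by the principal congruences $\mathrm{Cg}(y,z)$, $\mathrm{Cg}(x,y)$, $\mathrm{Cg}(x,z)$ with free algebras on two generators, correctly produces a single term $p$ witnessing all three identities. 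The paper offers no proof to compare against—it states the result as Pixley's classical theorem with a citation and no argument—and what you have written is a faithful rendition of the standard proof.
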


The first two equations impose \(p\) to be a Mal'tsev operation, while the third one is sometimes known as the \defn{Pixley axiom}.

\begin{examples}
    Using this characterisation, Köhler concludes in~\cite{Kohler} that the variety \(\HSLat\) is arithmetical, since the ternary operation \(p\) defined as
    \begin{align*}
        p(x,y,z) = ((x\iimplies y) \iimplies z) \wedge ((z\iimplies y)\iimplies x )\wedge ((z\iimplies x)\iimplies x)
    \end{align*}
    satisfies the equations of Proposition~\ref{pixley}. Other examples of arithmetical categories are the category of boolean rings without unit, the category of von Neumann regular rings or the dual of any topos (see~\cite{Borceux-Bourn}).
\end{examples}

\subsection*{Higgins commutators in arithmetical context}

As we recalled above, in~\cite{Pedicchio}, Pedicchio defined a commutator \([R,S]\) for two arbitrary equivalence relations \(R\) and~\(S\) on an object \(X\) in an exact Mal'tsev category with coequalisers. Her definition generalises the one from Smith in the context of Mal'tsev varieties~\cite{Smith}, which explains why it is often referred to as the \defn{Smith--Pedicchio commutator}. Later, in~\cite{Pedicchio2}, not only she defines arithmetical categories, but she also characterises them as follows:
\begin{theorem}\label{Pedicchio charact of arithm}
    Let \(\C\) be an exact Mal'tsev category with coequalisers. The following are equivalent:
    \begin{tfae}
        \item \(\C\) is arithmetical,
        \item for any equivalence relations \(R\), \(S\) on a common object, \([R,S]=R\omeet S\).\noproof
    \end{tfae}
\end{theorem}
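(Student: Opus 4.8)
The plan is to derive both implications from the standard structural properties of the Smith--Pedicchio commutator \([R,S]\) of equivalence relations in an exact Mal'tsev category with coequalisers: it is symmetric, monotone in each variable, lies below the meet (\([R,S] \oleq R \omeet S\)), and --- the one substantial input --- it is \emph{additive}, i.e.\ it preserves binary joins in each variable, \([R \ojoin S, T] = [R,T] \ojoin [S,T]\) (a property valid in any exact Mal'tsev category with coequalisers). I would also recall that every Mal'tsev category has modular lattices of equivalence relations, so distributivity is the only thing at stake beyond modularity.

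\emph{(ii) \(\To\) (i).} This is the short direction and needs nothing beyond additivity together with the hypothesis. Substituting \([-,-] = -\omeet -\) into the additivity identity gives, for arbitrary equivalence relations \(R\), \(S\), \(T\) on a common object,
\[
    (R \ojoin S) \omeet T = [R \ojoin S, T] = [R,T] \ojoin [S,T] = (R \omeet T) \ojoin (S \omeet T),
\]
which is exactly the distributive law (the reverse inequality holding in any lattice). Hence every lattice of equivalence relations is distributive, so \(\C\) is arithmetical.

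\emph{(i) \(\To\) (ii).} Here only the inequality \(R \omeet S \oleq [R,S]\) needs proof, since the reverse holds always. By monotonicity it suffices to treat the diagonal case: writing \(\theta \coloneq R \omeet S\), one has \([\theta,\theta] \oleq [R,S]\), so it is enough to show that \emph{every} equivalence relation satisfies \([\theta,\theta] = \theta\); then \(\theta = [\theta,\theta]\oleq [R,S]\oleq \theta\). Thus the whole content is that distributivity forbids proper ``abelian'' intervals in the lattice of equivalence relations.

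I would prove this last point contrapositively. Suppose \([\theta,\theta] = \psi \lneq \theta\) for some \(\theta\) on an object \(X\); the interval \([\psi,\theta]\) is then abelian, meaning that \(\theta/\psi\) centralises itself on \(X/\psi\) and so carries an internal abelian group structure in the relevant fibre. The main obstacle --- and the technical heart of Pedicchio's theorem --- is to convert this abelian structure into a concrete non-distributive configuration. The model is the absolute case of an abelian object \(A \neq 0\), whose square \(A \times A\) carries the three equivalence relations \(\Eq(\pi_1)\), \(\Eq(\pi_2)\) and \(\Eq(+)\) (the kernel pairs of the two projections and of the addition): these pairwise meet in \(\Delta\) and pairwise join to \(\nabla\), hence form a copy of the diamond \(M_3\), which is modular but not distributive. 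From the self-centralising relation on \([\psi,\theta]\) one manufactures, on a suitable object built over \(X/\psi\), the same three pairwise complementary equivalence relations strictly between two comparable ones; this copy of \(M_3\) contradicts distributivity and forces \(\psi = \theta\), hence \([\theta,\theta] = \theta\). In the pointed case this is precisely the step recorded in Theorem~\ref{theorem arithm no ab obj}, where abelian intervals are replaced by nonzero abelian objects; the difficulty in the general Mal'tsev setting is carrying out the \(M_3\)-construction \emph{relatively}, over \(X/\psi\) rather than over a zero object.
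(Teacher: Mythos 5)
The paper itself contains no proof of this theorem: it is quoted from Pedicchio's article~\cite{Pedicchio2} and marked \emph{no proof}, so your attempt can only be measured against Pedicchio's original argument --- with which it agrees in outline. Your direction (ii) \(\To\) (i) is complete and correct: join-additivity \([R\ojoin S,T]=[R,T]\ojoin[S,T]\), together with \([R,S]\oleq R\omeet S\) and stability of the commutator under direct images along regular epimorphisms, are exactly the properties established in~\cite{Pedicchio} for exact Mal'tsev categories with coequalisers, and substituting the hypothesis (ii) into additivity yields the distributive law as you say. Likewise, your reduction of (i) \(\To\) (ii) to proving \([\theta,\theta]=\theta\) for \emph{every} equivalence relation \(\theta\) is sound.

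The genuine gap is that the step you yourself call the technical heart --- converting a proper abelian interval into a copy of \(M_3\) --- is announced rather than performed, so as written you have a proof plan for the hard half, not a proof. The plan does go through, and here is the missing construction. Direct-image stability applied to the quotient \(f\colon X\to Y\coloneq X/\psi\) turns \([\theta,\theta]=\psi\) into \([T,T]=\Delta_Y\) for \(T\coloneq f(\theta)\); a relation centralising itself is precisely one whose underlying point \((T,d_0,e)\) (with \(e\) the reflexivity section) carries an internal abelian group structure in the fibre \(\Pt_Y(\C)\), the connector providing the fibrewise addition \({+}\colon T\times_Y T\to T\). The three kernel pairs \(\Eq(\pi_1)\), \(\Eq(\pi_2)\) and \(\Eq(+)\) of the two projections and the addition are equivalence relations on the object \(T\times_Y T\) of \(\C\), all contained in \(\Eq(\pi)\) where \(\pi\colon T\times_Y T\to Y\) is the structure map; an elementwise check (legitimate by Barr exactness, using that equivalence relations in a Mal'tsev category compose to give joins) shows that they pairwise meet in \(\Delta\), pairwise join to \(\Eq(\pi)\), and are pairwise distinct as soon as \(T\neq\Delta_Y\). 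This diamond \(M_3\) inside the interval \([\Delta,\Eq(\pi)]\) of the lattice of equivalence relations on \(T\times_Y T\) contradicts (i), forcing \(T=\Delta_Y\), i.e.\ \(\psi=\theta\). With this paragraph supplied, your argument is correct and is in substance Pedicchio's; its pointed shadow is exactly the abelian-object argument of Theorem~\ref{theorem arithm no ab obj} in the paper, as you correctly observe.
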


Here, our goal is similar but instead of studying commutators of equivalence relations, we focus on Higgins commutators and move to a semi-abelian context. First we need the following lemma.

\begin{lemma}\label{image of intersections}
    In a semi-abelian category without abelian objects, let \(K\), \(L  \normal A \) be two normal subobjects of \(A\) and \(f\colon A \rightarrow B\) a morphism. Then we have that
    \begin{align*}
        f(K)\omeet f(L) = f(K\omeet L).
    \end{align*}
\end{lemma}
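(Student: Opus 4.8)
The plan is to establish the equality by proving the inclusion $f(K)\omeet f(L)\oleq f(K\omeet L)$ (the reverse inclusion $f(K\omeet L)\oleq f(K)\omeet f(L)$ being automatic from the monotonicity of regular images), and to do this by first reducing to the case of a regular epimorphism. Factoring $f$ as a regular epimorphism $e\colon A\to I$ followed by a monomorphism $m\colon I\rightarrowtail B$, one has $f(Z)=m(e(Z))$ for every subobject $Z\oleq A$, and a monomorphism preserves binary meets of subobjects: since $m^{-1}m$ is the identity on $\Sub(I)$ and $m(S)\omeet m(T)\oleq m(I)$, we get $m(S)\omeet m(T)=m(S\omeet T)$. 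Hence $f(K)\omeet f(L)=m\bigl(e(K)\omeet e(L)\bigr)$ and $f(K\omeet L)=m\bigl(e(K\omeet L)\bigr)$, so it suffices to treat the case $f=e$ a regular epimorphism, whose kernel I denote $N\normal A$.

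Next I would exploit that all the subobjects in play are normal. As $e$ is a regular epimorphism and $K$, $L$ are normal, the regular images $e(K)$, $e(L)$ are normal in $I$; consequently $e(K)\omeet e(L)$ and $e(K\omeet L)$ are normal subobjects of $I$ (a meet of normal subobjects is normal, by Lemma~\ref{joint kernels lemma}). The correspondence theorem for semi-abelian categories gives that $e^{-1}$ is an isomorphism from the lattice of normal subobjects of $I$ onto the lattice of normal subobjects of $A$ containing $N$; in particular it is injective, so it is enough to show that the two normal subobjects above have the same inverse image along $e$. Using that inverse image preserves meets and that $e^{-1}(e(K))=K\ojoin N$ for a normal subobject (and likewise for $L$), I compute
\[
    e^{-1}\bigl(e(K)\omeet e(L)\bigr)=(K\ojoin N)\omeet(L\ojoin N),\qquad e^{-1}\bigl(e(K\omeet L)\bigr)=(K\omeet L)\ojoin N.
\]
Everything therefore reduces to the lattice identity $(K\ojoin N)\omeet(L\ojoin N)=(K\omeet L)\ojoin N$ in the lattice of normal subobjects of $A$.

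Finally, this identity is the standard consequence of distributivity that in any distributive lattice $(a\vee c)\wedge(b\vee c)=(a\wedge b)\vee c$, so it remains only to know that the lattice of normal subobjects of $A$ is distributive. This is precisely where the hypothesis enters: by Theorem~\ref{theorem arithm no ab obj}, a semi-abelian category without abelian objects is \emph{arithmetical}, meaning that the lattice of equivalence relations on each object is distributive; and the normalisation bijection between normal subobjects and equivalence relations is an order isomorphism, hence a lattice isomorphism, so distributivity transfers to the lattice of normal subobjects of $A$. The work here is really bookkeeping of standard facts---normality of regular images along regular epimorphisms, the preimage formula $e^{-1}e(K)=K\ojoin N$, and the lattice isomorphism with equivalence relations---so I expect the main obstacle to be marshalling these cleanly rather than any genuinely new difficulty; once they are in place the remaining content is purely lattice-theoretic.
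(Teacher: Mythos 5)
Your proof is correct, but it takes a genuinely different route from the paper's. The reduction is shared: both proofs factor \(f=m\circ e\) and use the fact that the monomorphism \(m\) preserves binary meets of subobjects, so that everything comes down to showing \(e(K)\omeet e(L)=e(K\omeet L)\) for the regular epimorphism \(e\). At that point the paper finishes in one stroke by citing Bourn's theorem on direct images of intersections (Theorem~2.1 of~\cite{Bourn:Direct-Image}), which says that the quotient \(e(K)\omeet e(L)\big/ e(K\omeet L)\) is an abelian object; absence of abelian objects then forces it to vanish. You instead route through arithmeticity: by Theorem~\ref{theorem arithm no ab obj} ((iii)~\(\To\)~(i), which is proved independently of the lemma, so there is no circularity) the lattices of equivalence relations are distributive, and since in a semi-abelian category normalisation is an order isomorphism between equivalence relations and normal subobjects, the lattice of normal subobjects of \(A\) is distributive as well; combining the correspondence theorem along \(e\) with the preimage formula \(e^{-1}(e(K))=K\ojoin N\) (where \(N=\Ker(e)\)) and meet-preservation of \(e^{-1}\) reduces the claim to the distributive-lattice identity \((K\ojoin N)\omeet(L\ojoin N)=(K\omeet L)\ojoin N\). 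Each step you invoke is a genuine standard fact in this context---direct images of kernels along regular epimorphisms are kernels~\cite{Janelidze-Marki-Tholen}, the join \(K\ojoin N\) of normal subobjects is normal~\cite{EverVdLRCT} (which yields the preimage formula via the factorisation of \(A\rightarrow A/(K\ojoin N)\) through \(e\)), injectivity of \(e^{-1}\) from \(e(e^{-1}(S))=S\)---though in a polished write-up these deserve explicit citations, e.g.\ to~\cite{Borceux-Bourn}. What each approach buys: the paper's proof is three lines but leans on a nontrivial external result, whose conclusion (that the relevant quotient is abelian) is exactly engineered for this hypothesis; your proof is longer but self-contained modulo textbook homological lemmas, makes the role of the no-abelian-objects hypothesis transparent (it is precisely distributivity of the normal-subobject lattice), and makes visible why normality of \(K\) and \(L\) is needed in this argument---the correspondence theorem and the preimage formula are applied to normal subobjects.
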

\begin{proof}
    We write \(f=m\circ e\) where \(m\) is a monomorphism and \(e\) a regular epimorphism.
    Applying~\cite[Theorem~2.1]{Bourn:Direct-Image}, we know that the quotient~\(e(K)\omeet e(L) \Big/ e(K\omeet L)\) is an abelian object. However, abelian objects are trivial by hypothesis, so that \(e(K)\omeet e(L) = e(K\omeet L)\). Now
    \[
        f(K)\omeet f(L) = me(K)\omeet me(L) = m(e(K)\omeet e(L)) = me(K\omeet L) = f(K\omeet L)\text{,}
    \]
    which concludes the proof.
\end{proof}

This allows us to prove without any effort a characterisation for arithmetical categories in a semi-abelian context.

\begin{theorem}\label{theorem arithm commu is intersection}
    Let \(\C\) be a semi-abelian category. The following are equivalent:
    \begin{tfae}
        \item The only abelian object of \(\C\) is the zero object.
        \item Let \(X\), \(Y \oleq A\) be two subobjects of \(A\) in \(\C\); the Higgins commutator of \(X\) and \(Y\) is \(X^Y \omeet Y^X\).
        \item Every object \(A\) in \(\C\) is \defn{perfect}, which means that \([A,A]=A\).
    \end{tfae}
\end{theorem}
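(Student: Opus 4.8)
The plan is to establish the cycle of implications (i) $\To$ (ii) $\To$ (iii) $\To$ (i), since each single step is then either a direct application of an earlier result or a trivial specialisation.

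For (i) $\To$ (ii), I would build on the observation recorded just before Lemma~\ref{joint kernels lemma}, namely that $X\diamond Y$ is the intersection of $X\flat Y$ and $Y\flat X$ inside $X+Y$. The crucial point is that both factors are \emph{normal} subobjects of $X+Y$: indeed $X\flat Y$ is by definition the kernel of $\langle 1_X,0\rangle\colon X+Y\to X$, and $Y\flat X$ is the kernel of $\langle 0,1_Y\rangle\colon X+Y\to Y$. Since hypothesis~(i) is precisely the statement that $\C$ has no nonzero abelian objects, Lemma~\ref{image of intersections} applies to the morphism $\langle x,y\rangle\colon X+Y\to A$ with $K=X\flat Y$ and $L=Y\flat X$, giving
\[
\langle x,y\rangle\bigl((X\flat Y)\omeet(Y\flat X)\bigr)=\langle x,y\rangle(X\flat Y)\omeet\langle x,y\rangle(Y\flat X).
\]
The left-hand side is by definition $[X,Y]$, while Proposition~\ref{image of bemol}, applied once as stated and once with the roles of $X$ and $Y$ exchanged, identifies the two regular images on the right as $Y^X$ and $X^Y$ respectively. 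Hence $[X,Y]=X^Y\omeet Y^X$; in effect this upgrades the inequality of Corollary~\ref{inequality of Higgins} to an equality.

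For (ii) $\To$ (iii), I would simply specialise to $X=Y=A$, viewed as subobjects of $A$ via $1_A$. Then $X\ojoin Y=A$, and since $A$ is normal in itself, its normal closure in $A$ is $A$, so $X^Y=Y^X=A$. Applying (ii) yields $[A,A]=A\omeet A=A$, i.e.\ $A$ is perfect. For (iii) $\To$ (i), I would take an abelian object $A$: by the second item of Remarks~\ref{Remarks Higgins}, $A$ being abelian means $[A,A]^{\Huq}=0$, which is equivalent to $[A,A]=0$; combined with the perfection $[A,A]=A$ supplied by (iii), this forces $A=0$, so the only abelian object is the zero object.

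The main (really the only non-formal) obstacle is the step (i) $\To$ (ii), and its crux is the recognition that the two $\flat$-factors are normal in $X+Y$, which is exactly what licenses the use of Lemma~\ref{image of intersections}; the remaining implications are a bare specialisation and a short deduction from the equivalence $[A,A]^{\Huq}=0\Leftrightarrow[A,A]=0$.
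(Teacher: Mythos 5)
Your proposal is correct and follows essentially the same route as the paper's own proof: the implication (i) \(\To\) (ii) via Lemma~\ref{image of intersections} applied to the normal subobjects \(X\flat Y\), \(Y\flat X\) of \(X+Y\) together with Proposition~\ref{image of bemol}, the specialisation \(X=Y=A\) for (ii) \(\To\) (iii), and the computation \(A=[A,A]=0\) for (iii) \(\To\) (i). The only difference is that you spell out details the paper leaves implicit, such as the normality of the \(\flat\)-factors and the equivalence \([A,A]^{\Huq}=0\Leftrightarrow[A,A]=0\) from Remarks~\ref{Remarks Higgins}.
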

\begin{proof}

    (i) {\(\To\)} (ii): Let \(x\colon X\to A\) and \(y\colon Y\to A\) be two monomorphisms. Knowing that \(X\diamond Y= X\flat Y \omeet Y\flat X\normal X+Y\), we simply apply Lemma~\ref{image of intersections} with \(K\) and~\(L\) being respectively \(X\flat Y\) and \(Y\flat X\) viewed as subobjects of \(X+Y\) and then consider their direct image along the morphism \(\langle x,y\rangle\colon X+Y\to A\). The result now follows from Proposition~\ref{image of bemol}.

    (ii) \(\To\) (iii): For any object \(A\) we have that \([A,A]=A^A \omeet A^A=A\).

    (iii) \(\To\) (i): Considering an abelian object \(A\), we have that \(A=[A,A]=0\).
\end{proof}

\begin{example}
    Using a similar \textit{Eckmann-Hilton} argument as in Corollary~\ref{corollary hslat no abelian objects}, one can see that the category \(\Hoops\) has no abelian objects and thus is arithmetical.
\end{example}

We observe that if in Theorem~\ref{theorem arithm commu is intersection} we take \(X\) and \(Y\) to be normal subobjects, then their Higgins commutator is their intersection in \(A\). Moreover, since the intersection of two normal subobjects is again a normal subobject, we have in this case that the commutator of Higgins is the Huq commutator (see Section~\ref{section preliminaries}). In other words, we have the following, which extends Proposition~2.2 in~\cite{GrayVdL1}:

\begin{corollary}\label{Corollary (NH)}
    A semi-abelian category without abelian objects satisfies \NH{}.\noproof
\end{corollary}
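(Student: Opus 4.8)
The plan is to show directly that for two normal subobjects \(K\), \(L \normal A\) the Higgins commutator \([K,L]\) coincides with the intersection \(K \omeet L\), which is manifestly normal. First I would invoke Theorem~\ref{theorem arithm commu is intersection}: since \(\C\) has no abelian objects, condition (i) holds, so condition (ii) applies and gives \([K,L] = K^L \omeet L^K\), where—following the notation of Proposition~\ref{image of bemol}—the subobjects \(K^L\) and \(L^K\) are the normal closures of \(K\) and \(L\) inside the join \(K \ojoin L\).

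The key step is then to argue that these normal closures collapse, i.e.\ that \(K^L = K\) and \(L^K = L\). For this I would use the elementary fact that normality is inherited by intermediate subobjects: if \(K = \Ker(f)\) for some \(f \colon A \to B\) and \(K \oleq M \oleq A\), then restricting \(f\) along the inclusion \(M \rightarrowtail A\) yields a morphism \(f' \colon M \to B\) whose kernel is \(K \omeet M = K\), so that \(K \normal M\). Applying this with \(M = K \ojoin L\) shows \(K \normal K \ojoin L\), whence its normal closure in \(K \ojoin L\) is \(K\) itself; by symmetry \(L^K = L\).

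Combining the two steps gives \([K,L] = K \omeet L\). Finally, the intersection of two normal subobjects is again normal: writing \(K = \Ker(f)\) and \(L = \Ker(g)\) for \(f \colon A \to B\) and \(g \colon A \to C\), Lemma~\ref{joint kernels lemma} identifies \(K \omeet L\) with \(\Ker\bigl((f,g)\bigr)\). Hence \([K,L]\) is normal in \(A\), and \(\C\) satisfies \NH{}. I do not expect any serious obstacle here: the only point requiring a moment's thought is the inheritance of normality to \(K \ojoin L\), and once that is in place everything else is a direct bookkeeping of the preceding results.
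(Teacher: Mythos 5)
Your proposal is correct and follows essentially the same route as the paper, which derives the corollary from Theorem~\ref{theorem arithm commu is intersection} applied to normal subobjects, noting that \(K^L=K\) and \(L^K=L\) so that \([K,L]=K\omeet L\), which is normal as an intersection of kernels. Your explicit verification that normality passes to intermediate subobjects (via restricting the defining morphism to \(K\ojoin L\)) and your appeal to Lemma~\ref{joint kernels lemma} merely spell out the steps the paper leaves implicit.
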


The same type of reasoning allows us to conclude the following even outside of the arithmetical context.

\begin{proposition}\label{Proposition Normal Closure}
    For a perfect object \(X\) in a semi-abelian category, whenever \({X\oleq A}\) we have that \(X^A=[X,A]\).
\end{proposition}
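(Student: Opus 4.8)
The plan is to reduce the equality to a single containment by invoking Corollary~\ref{Corollary MM-NC}, which expresses the normal closure as $X^A = X \ojoin [X,A]$. It will then suffice to show $X \oleq [X,A]$: the inclusion $[X,A] \oleq X \ojoin [X,A] = X^A$ is automatic, and once $X \oleq [X,A]$ is known the join on the right collapses to $[X,A]$, giving $X^A = [X,A]$.

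To obtain $X \oleq [X,A]$, I would first compute the Higgins commutator of the subobject $X \oleq A$ with itself. Writing $x \colon X \rightarrowtail A$ for the inclusion, the defining map $\langle x, x\rangle \colon X+X \to A$ factors as $x \circ \nabla$ with $\nabla = \langle 1_X,1_X\rangle \colon X+X \to X$ the codiagonal. Since in a regular category the regular image of a composite is the composite of the regular images, and since $x$ is a monomorphism, the regular image of $X \diamond X$ under $\langle x,x\rangle$ is just the regular image of $X \diamond X$ under $\nabla$ carried into $A$ by $x$. That latter image is, by definition, the Higgins commutator $[X,X]$ formed inside $X$, which equals $X$ because $X$ is perfect; hence the commutator of $X \oleq A$ with itself, computed in $A$, is again $X$.

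Finally I would apply monotonicity of the Higgins commutator in its second variable (one of the basic properties recorded in \cite[Proposition~2.21]{HVdL}): from $X \oleq A$ we get $[X,X] \oleq [X,A]$, so that $X = [X,X] \oleq [X,A]$, which is exactly the containment we wanted. The step to watch is the bookkeeping about the ambient object---keeping apart the commutator $[X,X]$ taken inside $X$, which perfectness sets equal to $X$, from the commutator of $X \oleq A$ with itself taken inside $A$---and this is precisely where monomorphicity of $x$ is used. This mirrors the ``same type of reasoning'' behind Theorem~\ref{theorem arithm commu is intersection}, except that it now relies only on perfectness of the single object $X$ rather than on the absence of abelian objects, which is why the result continues to hold outside the arithmetical context.
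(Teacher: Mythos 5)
Your proof is correct and follows essentially the same route as the paper, whose entire argument is the chain \(X^A=X\ojoin [X,A]=[X,X]\ojoin [X,A]=[X,A]\) using Corollary~\ref{Corollary MM-NC}, perfectness, and monotonicity of the Higgins commutator. The only difference is that you make explicit the bookkeeping the paper leaves implicit---that \([X,X]\) computed in \(A\) agrees with \([X,X]\) computed in \(X\) because \(x\) is a monomorphism---which is a valid and careful elaboration rather than a different approach.
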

\begin{proof}
    \(X^A=X\ojoin [X,A]=[X,X]\ojoin [X,A]=[X,A]\) by Corollary~\ref{Corollary MM-NC}.
\end{proof}

Since the \(n\)-ary cosmash product, for \(n\geq 2\), can also be seen as an intersection---see Section~\ref{section preliminaries}---applying Proposition~\ref{image of n-bemol} and similar techniques as in the proof of Theorem~\ref{theorem arithm commu is intersection}, we conclude:

\begin{theorem}\label{theorem arithm n-comm}
    Let \(\C\) be a semiabelian category. The following are equivalent:
    \begin{tfae}
        \item The only abelian object of \(\C\) is the zero object.
        \item For any \(n\geq 2\), given \(n\) subobjects \(X_i \oleq A\), the inequality of Corollary~\ref{inequality of n-Higgins} is an equality:
        \begin{align*}
            [X_1,\dots ,X_n ] = X_1^{\bigojoin_k X_k} \omeet \cdots \omeet X_n^{\bigojoin_k X_k}.
        \end{align*}
        \item For any \(n\geq 2\), every object \(A\) in \(\C\) is \defn{\(n\)-perfect}, which means that the \(n\)-ary Higgins commutator \([A,\dots, A]\) is \(A\).\noproof
    \end{tfae}
\end{theorem}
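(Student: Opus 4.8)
The plan is to reproduce, at the level of $n$-ary commutators, the three-step cycle of implications used in the proof of Theorem~\ref{theorem arithm commu is intersection}.

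For (i) $\To$ (ii), I would begin from the description of the $n$-ary cosmash product as the intersection recalled just before the statement:
\[
    X_1 \diamond \dots \diamond X_n = \bigomeet_{i=1}^n \Bigl(\coprod_{j\neq i} X_j\Bigr)\flat X_i \normal X_1 + \dots + X_n.
\]
Each factor $(\coprod_{j\neq i} X_j)\flat X_i$ is a kernel, hence normal in $X_1 + \dots + X_n$, and since intersections of normal subobjects are again normal, I can iterate Lemma~\ref{image of intersections} along $\langle x_1, \dots, x_n\rangle$: an induction on the number of factors shows that the regular image of the intersection is the intersection of the regular images. Identifying each image $\langle x_1, \dots, x_n\rangle\bigl((\coprod_{j\neq i} X_j)\flat X_i\bigr)$ with $X_i^{\bigojoin_k X_k}$ via Proposition~\ref{image of n-bemol} then yields the stated equality.

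For (ii) $\To$ (iii), I would simply set $X_1 = \dots = X_n = A$. Then $\bigojoin_k X_k = A$, so each normal closure $A^A$ equals $A$, and the right-hand side collapses to $A \omeet \dots \omeet A = A$; this is exactly the assertion that $A$ is $n$-perfect. For (iii) $\To$ (i), it suffices to invoke the case $n = 2$: if $A$ is abelian then $[A,A]^{\Huq} = 0$, whence $[A,A] = 0$ by Remarks~\ref{Remarks Higgins}, while $2$-perfectness gives $[A,A] = A$, so that $A = 0$.

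The only step needing genuine care is the inductive upgrade of Lemma~\ref{image of intersections} from two normal subobjects to finitely many. This rests on the stability of normal subobjects under finite intersection in a semi-abelian category, which guarantees that each partial intersection to which the lemma is applied is again normal; granting this, the remainder is a direct transcription of the binary argument.
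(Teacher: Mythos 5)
Your proof is correct and follows essentially the route the paper intends: the theorem is stated there without proof, with a pointer to Proposition~\ref{image of n-bemol} and the ``similar techniques'' of Theorem~\ref{theorem arithm commu is intersection}, and your argument---including the inductive upgrade of Lemma~\ref{image of intersections} to finitely many normal subobjects, justified by the stability of kernels under finite intersection (Lemma~\ref{joint kernels lemma})---is exactly that intended filling-in. The implications (ii) \(\To\) (iii) and (iii) \(\To\) (i) likewise transcribe the binary case faithfully.
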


Morally, this means that, in a semiabelian context, arithmetical categories can be seen as the ones with the largest Higgins commutators possible, so that \(A=[A,A]=[A,A,A]=[A,\dots,A]\) for any object \(A\). Since~\cite{Pedicchio2} the Smith--Pedicchio commutator \([R,S]\) is always smaller than \(R\omeet S\), Theorem~\ref{theorem arithm n-comm} can be understood in the spirit of Theorem~\ref{Pedicchio charact of arithm}.

\begin{corollary}\label{corollary arithm NHn}
    Let \(\C\) be a semiabelian category without abelian objects and \(n\geq 2\) a natural number. Then \(\C\) satisfies the condition
    \begin{quote}
        For any \(n\) normal subobjects \(K_i  \normal A\), the \(n\)-ary Higgins commutator \([K_1,\dots, K_n]\) is a normal subobject of \(A\).
    \end{quote}
    which we denote \(\NH^n\).\noproof
\end{corollary}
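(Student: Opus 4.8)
The plan is to derive $\NH^n$ directly from the equality established in Theorem~\ref{theorem arithm n-comm}, mirroring the observation that yields Corollary~\ref{Corollary (NH)} in the binary case. Suppose $K_1, \dots, K_n \normal A$ are normal subobjects. Since a semi-abelian category without abelian objects satisfies condition~(i) of Theorem~\ref{theorem arithm n-comm}, I would invoke the resulting equality to write
\[
    [K_1,\dots,K_n] = K_1^{\bigojoin_k K_k} \omeet \cdots \omeet K_n^{\bigojoin_k K_k},
\]
where each $K_i^{\bigojoin_k K_k}$ denotes the normal closure of $K_i$ in the join $\bigojoin_k K_k = K_1 \ojoin \cdots \ojoin K_n$.

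The key step is to observe that each of these normal closures collapses to $K_i$ itself. Indeed, since $K_i \normal A$ is a kernel, say $K_i = \Ker(f_i)$ for some $f_i \colon A \to B_i$, and since $K_i \oleq \bigojoin_k K_k \oleq A$, the restriction of $f_i$ along the inclusion $\bigojoin_k K_k \rightarrowtail A$ has kernel exactly $K_i$. Hence $K_i$ is already normal in $\bigojoin_k K_k$, so its normal closure there equals itself: $K_i^{\bigojoin_k K_k} = K_i$. Substituting gives
\[
    [K_1,\dots,K_n] = K_1 \omeet \cdots \omeet K_n.
\]

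It then remains to note that the intersection of finitely many normal subobjects of $A$ is again normal in $A$. This is immediate from Lemma~\ref{joint kernels lemma}: writing $K_i = \Ker(f_i)$ as above, the intersection $\bigomeet_{i} K_i$ is the kernel of the induced morphism $(f_1, \dots, f_n) \colon A \to B_1 \times \cdots \times B_n$. Therefore $[K_1,\dots,K_n] = \bigomeet_{i} K_i \normal A$, which is exactly condition $\NH^n$. I do not anticipate any genuine obstacle here; the only point requiring a moment's care is the stability of normality under passage to an intermediate subobject, namely that $K \normal A$ together with $K \oleq S \oleq A$ forces $K \normal S$, which is the standard fact that kernels restrict.
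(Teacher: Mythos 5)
Your proof is correct and follows exactly the route the paper intends: the corollary is stated without proof precisely because, as in the binary case (Corollary~\ref{Corollary (NH)}), one applies Theorem~\ref{theorem arithm n-comm} and notes that each normal closure \(K_i^{\bigojoin_k K_k}\) collapses to \(K_i\) since kernels restrict to intermediate subobjects, leaving the intersection \(\bigomeet_i K_i\), which is normal by Lemma~\ref{joint kernels lemma}. Your write-up supplies these standard details accurately, so there is nothing to correct.
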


We propose to name the condition \(\NH^n\) the \defn{\(n\)-normality of Higgins commutators}. It is clear that for \(n=2\) we recover the aforementioned \emph{normality of Higgins commutators}. We remark that the converse of the previous corollary is not true, since the category \(\Grp\) of groups satisfies \NH{}, while of course non-trivial abelian objects exist (namely, the abelian groups). 

\section{Normal subobjects}\label{Section Normal Subobjects}
In order to describe the normal subobjects in \(\HSLat\), we recall the definition of a filter.

\begin{definition}
    A non-empty subset \(K\) of a Heyting semilattice \(A\) is a \defn{filter} of \(A\) when
    \begin{enumerate}
        \item if \(x\), \(y\in K\) then \(x\wedge y \in K\);
        \item if \(x\in K\) and \(y\in A\) such that \(y\geq x\), then \(y\in K\).
    \end{enumerate}
\end{definition}

\begin{example}
    In a \emph{finite} Heyting semilattice \(A\), any filter is of the form
    \begin{equation*}
        \ua{x}=\lbrace y \in A \mid y\geq x\rbrace
    \end{equation*}
    for some \(x\in A\). Let, indeed, \(K\) be a filter in \(A\); we consider the element \(x=\bigwedge_{k\in K} k\), which exists because \(K\) is finite. By definition of filters, we have that \(x\in K\) and that \(\ua{x} \oleq K\). But since \(x\) is lower than or equal to any element in \(K\), we have \(K\oleq \ua{x}\) and thus \(K=\ua{x}\).
\end{example}

It was shown by Nemitz in \cite{Nemitz} that:

\begin{lemma}\label{Lemma Normal is Filter}
    A non-empty subset \(K\) of a Heyting semilattice \(A\) is the kernel of some morphism \(f\colon A\rightarrow B\) of Heyting semilattices if and only if~\(K\) is a filter. In categorical terminology, filters correspond to \defn{normal subobjects} in the category~\(\HSLat\). \noproof
\end{lemma}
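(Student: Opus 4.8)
The plan is to pin down what the kernel is and then prove the two implications separately. Since \(\HSLat\) is pointed with zero object the one-element semilattice \(\{1\}\), the kernel of a morphism \(f\colon A\to B\) is the fibre over the top element, \(f^{-1}(1)=\{a\in A\mid f(a)=1\}\). So the statement amounts to: \(K=f^{-1}(1)\) for some \(f\) if and only if \(K\) is a filter.

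For the forward implication, suppose \(K=f^{-1}(1)\), and verify the two filter axioms directly. If \(x\), \(y\in K\) then \(f(x\wedge y)=f(x)\wedge f(y)=1\wedge 1=1\), so \(x\wedge y\in K\). For upward closure, recall that any morphism preserves meets and hence the order; thus if \(x\in K\) and \(x\leqslant y\), then \(1=f(x)\leqslant f(y)\), which forces \(f(y)=1\) since \(1\) is the top element, whence \(y\in K\). So \(K\) is a filter.

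For the converse, given a filter \(K\) I would construct a quotient whose projection has kernel exactly \(K\). Define the relation \(\sim_K\) on \(A\) by
\[
    x\sim_K y \quad\Longleftrightarrow\quad (x\iimplies y)\in K \;\text{ and }\; (y\iimplies x)\in K.
\]
The crucial claim is that \(\sim_K\) is a congruence on the Heyting semilattice \(A\); granting this, the quotient \(A/{\sim_K}\) carries the induced operations and the projection \(q\colon A\to A/{\sim_K}\) is a morphism in \(\HSLat\). Since \(x\iimplies 1=1\) and \(1\iimplies x=x\), one has \(x\sim_K 1\) if and only if \(1\in K\) and \(x\in K\); and \(1\in K\) holds automatically because \(K\) is non-empty and upward closed. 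Hence \(x\sim_K 1\) if and only if \(x\in K\), so that \(\Ker(q)=q^{-1}(1)=K\), which finishes the proof.

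The main obstacle is establishing that \(\sim_K\) is a congruence, and everything reduces to a short list of order inequalities, each proved from the adjunction \(x\wedge y\leqslant z\Leftrightarrow x\leqslant y\iimplies z\) together with the filter axioms. Reflexivity and symmetry are immediate (\(x\iimplies x=1\in K\)). Transitivity follows from \((x\iimplies y)\wedge(y\iimplies z)\leqslant x\iimplies z\): the left side lies in \(K\) by closure under meet, so the right side lies in \(K\) by upward closure. Compatibility with \(\wedge\) uses the analogous inequality \((x\iimplies x')\wedge(y\iimplies y')\leqslant (x\wedge y)\iimplies(x'\wedge y')\). The most delicate point is compatibility with \(\iimplies\); here I would prove
\[
    (x'\iimplies x)\wedge(y\iimplies y')\leqslant (x\iimplies y)\iimplies(x'\iimplies y')
\]
by combining the monotonicity facts \((x'\iimplies x)\leqslant (x\iimplies y)\iimplies(x'\iimplies y)\) and \((y\iimplies y')\leqslant (x'\iimplies y)\iimplies(x'\iimplies y')\) with the transitivity inequality above. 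A final application of the filter axioms then yields \((x\iimplies y)\sim_K(x'\iimplies y')\), completing the verification that \(\sim_K\) is a congruence.
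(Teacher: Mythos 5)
Your proof is correct. The paper offers no argument of its own for this lemma---it simply cites Nemitz---and your proof is exactly the classical one from that source: kernels in \(\HSLat\) are the fibres over \(1\), kernels are easily seen to be filters, and conversely a filter \(K\) induces the congruence \(x\sim_K y \Leftrightarrow (x\iimplies y)\in K \text{ and } (y\iimplies x)\in K\), whose quotient projection has kernel exactly \(K\); all the order inequalities you list (transitivity, compatibility with \(\wedge\) and with \(\iimplies\)) do follow from the adjunction as you indicate.
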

Therefore, for any filter of \(A\) of the form \(\ua{x}\) we can define its quotient \(A/\ua{x}\). In the finite case, this quotient is isomorphic to \(\da{x}\), i.e., the Heyting semilattice formed by all the elements lower than \(x\), with \(x\) playing the role of \(1\). This object is further isomorphic to the Heyting semilattice \(x \iimplies A\coloneq \{ x\iimplies a \mid a \in A \}\).

\begin{lemma}\label{lemma normal closure finite}
    Let \(X\) be a subobject of a finite Heyting semilattice \(A\). Then the normal closure of \(X\) in \(A\) is \(\ua{a}\) where \(a=\bigwedge_{x\in X} x\).
\end{lemma}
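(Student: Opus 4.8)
The plan is to use the dictionary between normal subobjects and filters (Lemma~\ref{Lemma Normal is Filter}) to turn this categorical statement into a purely order-theoretic one. By definition the normal closure \(X^A\) is the smallest normal subobject of \(A\) containing \(X\), and since the normal subobjects of a Heyting semilattice are exactly its filters, it suffices to prove that \(\ua{a}\) is the smallest filter of \(A\) containing the underlying subset of \(X\), where \(a=\bigwedge_{x\in X}x\). (One could instead try to compute \(X^A=X\ojoin[X,A]\) via Corollary~\ref{Corollary MM-NC}, but the direct filter argument is far more economical in the finite case.)

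First I would check that \(\ua{a}\) is a filter containing \(X\). It is upward closed by construction, and it is closed under binary meets, since \(y,z\geq a\) forces \(y\wedge z\geq a\); moreover every \(x\in X\) satisfies \(x\geq a\) by the very definition of \(a\) as the meet of all elements of \(X\), so \(X\oleq\ua{a}\). Note that \(a\) is well defined precisely because \(A\), and hence \(X\), is finite and \(X\) is nonempty (as a subalgebra it contains the constant \(1\)).

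Second I would establish minimality. Let \(K\) be any filter of \(A\) with \(X\oleq K\). Since \(X\) is a finite nonempty subset of \(K\), the element \(a=\bigwedge_{x\in X}x\) is a \emph{finite} meet of elements of \(K\); as \(K\) is closed under binary, and therefore under finite nonempty, meets, we conclude \(a\in K\). Upward closure of \(K\) then gives \(\ua{a}\oleq K\). Hence \(\ua{a}\) is contained in every filter containing \(X\), so it is the smallest one, and therefore \(X^A=\ua{a}\). I do not expect a real obstacle here: the only subtle point is that the meet defining \(a\) is finite, so that closure under \emph{binary} meets already forces \(a\in K\); this is exactly where finiteness of \(A\) is used, and without it the generated filter would instead be the upward closure of the set of all finite meets drawn from \(X\).
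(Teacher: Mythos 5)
Your proof is correct and follows essentially the same filter-theoretic route as the paper: verify that \(\ua{a}\) is a filter containing \(X\), then show it lies inside any filter \(K\) with \(X\oleq K\). The only cosmetic difference is that the paper obtains minimality by invoking that every filter of a finite Heyting semilattice is principal (writing \(K=\ua{k}\) and using \(a\in X\oleq K\) to get \(k\leq a\)), whereas you derive \(a\in K\) directly from closure of \(K\) under finite meets---an equally valid and marginally more self-contained step.
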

\begin{proof}
    It is clear that \(X\oleq \ua{a}\). To see that \(\ua{a}\) is the smallest filter in~\(A\) containing~\(X\), let \(K\) be a filter of \(A\) such that \( X\oleq K\). Then \(K=\ua{k}\) for some \(k\in A\). Since \(a\in X\) and \(X\oleq K\), then \(k\leq a\), which implies that \(\ua{a} \oleq \ua{k}\).
\end{proof}

The interpretation of normal monomorphisms as filters allows us to make the following key observation:

\begin{proposition}
    In the category \(\HSLat\), a composite of two normal monomorphisms is again a normal monomorphism.
\end{proposition}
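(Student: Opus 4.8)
The plan is to invoke the characterisation of normal subobjects as filters (Lemma~\ref{Lemma Normal is Filter}) and thereby turn the statement into an elementary, order-theoretic verification. A composite of normal monomorphisms presents the situation \(L \normal K \normal A\), where \(K\) is a filter of \(A\) and \(L\) is a filter of the Heyting semilattice \(K\); the task reduces to showing that \(L\) is then a filter of \(A\), so that Lemma~\ref{Lemma Normal is Filter} returns \(L\) as a normal subobject of \(A\).

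First I would record that a filter is a sub-Heyting semilattice carrying the inherited structure, which is what makes the nested condition \(L\normal K\) meaningful with respect to the ambient operations. Indeed, a filter contains \(1\), is closed under \(\wedge\) by the first filter axiom, and is closed under \(\iimplies\): from \(y\leqslant x\iimplies y\) and upward closure, \(x\iimplies y\) lies in the filter whenever \(y\) does. Hence the meet and the order on \(K\) coincide with those of \(A\), so ``\(L\) is a filter of \(K\)'' refers to exactly the operations restricted from \(A\).

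The verification that \(L\) is a filter of \(A\) then splits along the two filter axioms. Closure under meet is immediate, since the meet in \(K\) is the meet in \(A\). For upward closure I would take \(\ell\in L\) and \(a\in A\) with \(\ell\leqslant a\): upward closure of \(K\) in \(A\) forces \(a\in K\), and then upward closure of \(L\) in \(K\) (using that the order on \(K\) is inherited) forces \(a\in L\). Non-emptiness is inherited, as \(L\) already contains \(1\). Thus \(L\) is a filter of \(A\), and a final application of Lemma~\ref{Lemma Normal is Filter} yields \(L\normal A\), i.e., the composite is a normal monomorphism.

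The argument has no serious obstacle; the only point deserving care is the remark that filters are sub-Heyting semilattices with inherited order and meet, since this is what guarantees that upward closure composes transitively through \(K\). This is precisely where the concrete description of normal subobjects as filters does the work that would be delicate to carry out purely categorically—indeed, transitivity of normality fails in general semi-abelian categories.
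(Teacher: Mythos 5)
Your proof is correct and follows essentially the same route as the paper's: both reduce the statement via Lemma~\ref{Lemma Normal is Filter} to the observation that up-closedness is transitive (if \(L\) is up-closed in \(K\) and \(K\) is up-closed in \(A\), then \(L\) is up-closed in \(A\)). You merely spell out the details the paper leaves implicit---that filters are sub-Heyting semilattices with inherited order and meet, so that the nested filter conditions compose---which is a harmless and accurate elaboration.
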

\begin{proof}
    The claim follows easily from Lemma~\ref{Lemma Normal is Filter} by noting that if a subset \(X\) of a Heyting semilattice \(A\) is up-closed in another subset \(Y\) of \(A\), and \(Y\) is up-closed in~\(A\), then \(X\) is up-closed in \(A\).
\end{proof}

We say that the category of Heyting semilattices satisfies \emph{transitivity of normality}. After investigating a few further examples, we spend the next section on deducing consequences.

\begin{definition}
    We say that in a semi-abelian category, \defn{normality is transitive} or that it satisfies \defn{transitivity of normality (ToN)} when any composite of two normal monomorphisms is again a normal monomorphism.
\end{definition}

\begin{remark}
    Since in a semi-abelian category, normal epimorphisms are always closed under composition~\cite{Borceux-Bourn}, the condition \ToN{} is equivalent to axiom (ex2) in~\cite{Grandis-HA2}. Since Grandis's axioms (ex0), (ex1) and (ex3) all hold in any semi-abelian category (with respect to the ideal of zero maps), a semi-abelian category satisfies \ToN{} if and only if it is \emph{Grandis-homological}~\cite{Grandis-HA2}.
\end{remark}

\begin{examples}
    \begin{enumerate}
        \item Any abelian category satisfies \ToN, because in it all mono\-morphisms are normal.
        \item The arithmetical semi-abelian category \(\Set^{\op}_*\), the dual of the category of pointed sets, satisfies \ToN{} because in \(\Set_*\), a morphism is a normal epimorphism precisely when it is injective outside the inverse image of the base point; it is clear that a composite of two normal epimorphisms still satisfies this property.
        \item The category \(\Hoops\) satisfies \(\ToN\) for essentially the same reason \(\HSLat\) does: a subhoop is normal if and only if it is a filter~\cite{BlokPigozzi}.
        \item In the category \(\LAb\) of lattice-ordered abelian groups, normal subobjects correspond to convex subgroups. Therefore, normality is transitive in \(\LAb\).
    \end{enumerate}
\end{examples}

\begin{remark}
    The condition \ToN{} makes sense outside of the semi-abelian context: for instance, in~\cite{JQT} it is explained that the category of commutative monoids and the category of topological abelian groups satisfy this condition, and in~\cite{PVdL2} a link with the validity of the Snake Lemma is explored. In the present article, we focus on semi-abelian categories.
\end{remark}

Non-abelian semi-abelian examples are scarce. Most are easily checked by means of the next lemma.

\begin{lemma}\label{Lemma ToN Char}
    For a perfect object \(X\) in a semi-abelian category, the following are equivalent:
    \begin{tfae}
        \item if \(X\normal Y\normal A\) then \(X\normal A\);
        \item whenever \(X\oleq A\) we have \(X^A=X^{(X^A)}\);
        \item whenever \(X\oleq A\) we have \([X,A]= [X,[X,A]]\);
        \item whenever \(X\oleq A\) we have \([X,A]\oleq [X,[X,A]]\);
        \item whenever \(X\oleq A\) we have \([[X,X],A]\oleq [X,[X,A]]\).
    \end{tfae}
\end{lemma}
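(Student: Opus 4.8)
The plan is to push everything through Proposition~\ref{Proposition Normal Closure}, which for a perfect \(X\) identifies the normal closure with a Higgins commutator: whenever \(X\oleq A\) one has \(X^A=[X,A]\), and applying the same fact inside \(X^A\) gives \(X^{(X^A)}=[X,X^A]=[X,[X,A]]\). With these two identities the conditions (ii) and (iii) become literally the same statement, so I would record (ii)\(\Leftrightarrow\)(iii) at once and then work entirely with commutators.

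Next I would dispose of the remaining commutator equivalences. The implication (iii)\(\To\)(iv) is trivial, and the reverse inequality needed for (iv)\(\To\)(iii) is automatic: since \([X,A]\oleq A\), monotonicity of the Higgins commutator gives \([X,[X,A]]\oleq[X,A]\) for free, so (iv) already forces equality. For (iv)\(\Leftrightarrow\)(v) I would simply invoke perfectness: \([X,X]=X\) turns \([[X,X],A]\) into \([X,A]\), so (v) is nothing but (iv) rewritten. This settles that (ii), (iii), (iv) and (v) are all equivalent.

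To connect this block to (i) I would prove (iv)\(\To\)(i) directly. Given \(X\normal Y\normal A\), the subobject \(Y\) is normal in \(A\) and contains \(X\), hence contains the normal closure \(X^A=[X,A]\); thus \([X,A]\oleq Y\), and monotonicity yields \([X,[X,A]]\oleq[X,Y]\). Because \(X\) is perfect and normal in \(Y\), Proposition~\ref{Proposition Normal Closure} gives \([X,Y]=X^Y=X\), so \([X,[X,A]]\oleq X\). Feeding this into the hypothesis (iv) gives \([X,A]\oleq[X,[X,A]]\oleq X\), and Corollary~\ref{Corollary MM-NC} turns \([X,A]\oleq X\) into \(X\normal A\). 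One should check that \([X,Y]\) computed in \(A\) agrees with the one computed in \(Y\), which holds because the defining comparison map factors through the monomorphism \(Y\rightarrowtail A\), leaving the regular image unchanged.

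The hard part will be closing the loop with (i)\(\To\)(ii). The difficulty is structural: condition (i) only speaks about triples \(X\normal Y\normal A\), whereas (ii) quantifies over all \(A\supseteq X\), and for a generic embedding \(X\oleq A\) there need be no intermediate normal \(Y\) at all, so (i) cannot be applied inside \(A\) as it stands. My strategy would be contraposition: assuming (ii) fails for some \(A\), so that \([X,[X,A]]\subsetneq[X,A]=X^A\), I would try to manufacture a genuine defect-two subnormal copy of \(X\) that is not normal, contradicting (i). The natural device is the descending chain \(X^A\supseteq X^{(X^A)}\supseteq X^{(X^{(X^A)})}\supseteq\cdots\) of iterated normal closures of \(X\), each normal in its predecessor; if this chain reaches \(X\) one extracts a defect-two witness exactly as in the standard reduction of subnormality to defect two. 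The main obstacle I anticipate is the possibility that the chain \emph{stalls} strictly above \(X\) at an \(X\)-normally-dense step \(N\) with \([X,N]=N\); such an \(N\) is then necessarily perfect (since \([N,N]\oleq N\) while \([N,N]\supseteq[X,N]=N\)), and ruling this out—or converting it into a genuine violation of (i)—is where the essential use of the perfectness of \(X\), and possibly an enlargement of the ambient object, will be required.
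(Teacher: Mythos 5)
Your treatment of four of the five equivalences is correct and close to the paper's own: (ii) \(\Leftrightarrow\) (iii) via Proposition~\ref{Proposition Normal Closure} (the paper only states (ii) \(\To\) (iii), but your biconditional reading is the same computation), (iii) \(\Leftrightarrow\) (iv) with the correct observation that monotonicity makes the reverse inequality automatic, (iv) \(\Leftrightarrow\) (v) by perfectness, and your (iv) \(\To\) (i) is a harmless variant of the paper's: you route through \(X^A=[X,A]\oleq Y\) and \([X,Y]=X^Y=X\), whereas the paper uses \([X,Y]\oleq X\) and \([Y,A]\oleq Y\) from Corollary~\ref{Corollary MM-NC} to get \([X,A]\oleq [X,[X,A]]\oleq [X,[Y,A]]\oleq [X,Y]\oleq X\); both are sound, and your side remark that \([X,Y]\) is independent of the ambient object is correct.

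The genuine gap is (i) \(\To\) (ii), which you leave open. The paper closes it in one line, using none of the machinery you anticipate: for \(X\oleq A\) one has the chain \(X^{(X^A)}\normal X^A\normal A\), so (i) yields \(X^{(X^A)}\normal A\); since \(X^{(X^A)}\) contains \(X\), is normal in \(A\), and sits inside \(X^A\), minimality of the normal closure forces \(X^{(X^A)}=X^A\). Notice that this applies (i) with \(X^{(X^A)}\) in the leftmost slot rather than \(X\) itself---precisely the move your structural objection rules out under a strict fixed-\(X\) reading of (i). Your diagnosis is therefore not baseless: read with \(X\) frozen, (i) gives no purchase on a generic embedding \(X\oleq A\), and your fallback plan (contraposition via the descending chain of iterated normal closures) fails for exactly the reason you identify, since without chain conditions nothing prevents the chain from stalling strictly above \(X\) at a perfect \(N\) with \([X,N]=N\). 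But the paper's proof, and its subsequent applications (arithmetical varieties, where every object is perfect, and the algebraically coherent case, where only (v) \(\To\) (i) for the fixed perfect bottom object is invoked), make clear that the transitivity instance for the subobject \(X^{(X^A)}\) is what is meant to be used. The lesson is that the loop is not closed by manufacturing a defect-two copy of \(X\); it is closed by applying (i) to the already-subnormal subobject \(X^{(X^A)}\) and invoking minimality of \(X^A\).
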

\begin{proof}
    (i) \(\To\) (ii) because if (i) holds and \(X\oleq A\) then \(X^{(X^A)}\normal X^A\normal A\) implies \(X^{(X^A)}\normal A\) so that \(X^{(X^A)}=X^A\) and (ii) holds. (ii) \(\To\) (iii) depends on Proposition~\ref{Proposition Normal Closure}, which implies that
    \begin{align*}
        [X,A] =X^A=X^{(X^A)}= [X,X^A]= [X,[X,A]].
    \end{align*}
    (iii) \(\To\) (iv) is obvious, and (iv) \(\Leftrightarrow\) (v) holds by the assumption that \(X=[X,X]\). Now (iv) \(\To\) (i) by Corollary~\ref{Corollary MM-NC}, since \(X\normal Y\normal A\) implies \([X,Y]\oleq X\) and \([Y,A]\oleq Y\), so we may write
    \begin{align*}
        [X,A] \oleq [X,[X,A]] \oleq [X,[Y,A]]\oleq [X,Y]\oleq X
    \end{align*}
    and conclude that \(X\normal A\).
\end{proof}

\begin{examples}
    \begin{enumerate}
        \item The variety of Boolean rings being arithmetical, all of its objects are perfect. The lemma now allows us to conclude that \ToN{} holds, because if \(X\oleq A\), then we have \([[X,X],A]\oleq [X,[X,A]]\) since by associativity, \((xx')a\in [[X,X],A]\) is equal to \(x(x'a)\in [X,[X,A]]\).
        \item  The condition \ToN{} does not hold in the category of groups: we have \(\{e,(12)(34)\}\normal\{e,(12)(34),(13)(24),(23)(14)\}\normal S_4\) but \(\{e,(12)(34)\}\not\normal S_4\).
        \item A variety of non-associative algebras over an infinite field \(\K\) does not satisfy \ToN{} in general, unless it is the abelian or trivial variety~\cite{CocoThesis}. However, for an \emph{algebraically coherent} variety (see Section~\ref{section AC}) over an infinite field, transitivity of normality holds when the smallest subobject is perfect. This can be seen by using Theorem 2.12 of~\cite{GM-VdL2} and the fifth point of Lemma~\ref{Lemma ToN Char}.
    \end{enumerate}
\end{examples}

\begin{remark}
    The two conditions of arithmeticity and transitivity of normality are separated. In fact, on one hand we have abelian categories as counterexamples. On the other hand, the category \(\GpHSLat\) of groups equipped with a Heyting semilattice structure such that both structures share the same unit \(e\) is arithmetical (because of Proposition~\ref{pixley}) but not \ToN. This can be seen by considering the symmetric group \(S_4\), in which normality of subgroups is not transitive, and equipping its underlying set with a total order where \((13)(24)<(23)(14)<(12)(34)<e\) are the four greatest elements.
\end{remark}

\begin{remark}
    An example of a semi-abelian \ToN{} category that is neither abelian nor arithmetical is the product of the category \(\Ab\) of abelian groups with the category \(\HSLat\) of Heyting semilattices.
\end{remark}

\section{Categorical-algebraic consequences of transitivity of normality}\label{Section ToN}

We study some consequences of the condition \ToN{} in the context of semi-abelian categories. We start with some lemmas and a definition.

\subsection*{Some lemmas}
Let us consider two categories \(\C\) and \(\D\) such that \(\C\) admits finite limits.

\begin{lemma}
    \label{reflects_order}
    Let \(F\colon\C\to \D\) be a functor which preserves limits and reflects isomorphisms. For each object \(A\) of \(\C\), the induced map
    \begin{align*}
        \Sub(A)\to \Sub(F(A))  \colon (X\rightarrowtail A) \mapsto (F(X)\rightarrowtail F(A))
    \end{align*}
    (which is well defined because of the assumptions) is order preserving, preserves meets and is order reflecting.
\end{lemma}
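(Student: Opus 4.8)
The plan is to verify the three asserted properties in turn, after recalling why the assignment descends to subobjects. Since a morphism \(m\colon X\to A\) is a monomorphism precisely when the square with \(m\) on two parallel sides and identities on the other two is a pullback (equivalently, its kernel pair is trivial), and since \(F\) preserves finite limits, \(F\) sends monomorphisms to monomorphisms. As every functor preserves isomorphisms, two monomorphisms into \(A\) representing the same subobject are carried to monomorphisms into \(F(A)\) representing the same subobject; hence the rule \((X\rightarrowtail A)\mapsto(F(X)\rightarrowtail F(A))\) is a well-defined map \(\Sub(A)\to\Sub(F(A))\).

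For order preservation, I would argue as follows. If \(X\oleq Y\) in \(\Sub(A)\), there is a monomorphism \(j\colon X\rightarrowtail Y\) commuting with the two inclusions into \(A\). Applying \(F\) yields \(F(j)\), which is a monomorphism by the preceding paragraph and which commutes with the images of those inclusions; hence \(F(X)\oleq F(Y)\). For preservation of meets, I would use that the meet \(X\omeet Y\) is realised as the pullback (intersection) of the cospan \(X\rightarrowtail A\leftarrowtail Y\), which exists because \(\C\) has finite limits. Since \(F\) preserves limits, it carries this pullback to the pullback of \(F(X)\rightarrowtail F(A)\leftarrowtail F(Y)\); that is, \(F(X\omeet Y)=F(X)\omeet F(Y)\) in \(\Sub(F(A))\), the identification being via the canonical projections.

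The main point, and the only place where the hypothesis that \(F\) reflects isomorphisms enters, is order reflection. Here I would invoke the elementary fact that \(X\oleq Y\) in \(\Sub(A)\) if and only if the canonical monomorphism \(i\colon X\omeet Y\rightarrowtail X\) is an isomorphism. Assume \(F(X)\oleq F(Y)\); then the canonical map \(F(X)\omeet F(Y)\rightarrowtail F(X)\) is an isomorphism. By the meet-preservation just established, this canonical map is identified with \(F(i)\), so \(F(i)\) is an isomorphism; since \(F\) reflects isomorphisms, \(i\) is an isomorphism, whence \(X\oleq Y\). This completes the argument.

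The only delicate bookkeeping I anticipate lies in the reflection step: one must check that under the identification \(F(X\omeet Y)=F(X)\omeet F(Y)\) the projection \(F(X\omeet Y)\to F(X)\) really coincides with \(F\) of the canonical projection \(i\). This is immediate once one fixes the limit cones witnessing the two intersections and uses that \(F\) takes the chosen cone for \(X\omeet Y\) to a limit cone; everything else is a routine factorisation argument.
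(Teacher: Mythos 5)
Your proposal is correct and follows essentially the same route as the paper's proof: order and meet preservation are read off directly from preservation of limits, and order reflection is obtained exactly as in the paper by applying the reflection of isomorphisms to the identity \(F(X\omeet Y)=F(X)\omeet F(Y)=F(X)\) (equivalently, to your canonical morphism \(i\colon X\omeet Y\rightarrowtail X\)). The only difference is that you make explicit the well-definedness argument and the cone bookkeeping identifying \(F(i)\) with the canonical projection, which the paper leaves implicit.
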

\begin{proof}
    First, since \(F\) preserves limits, it is a direct consequence that the induced map preserves the order and meets. Now, given two subobjects \(X\) and \(Y\) of \(A\), if \(F(X) \oleq F(Y)\) then we have that
    \begin{align*}
        F(X \omeet Y) =F(X) \omeet F(Y) =F(X).
    \end{align*}
    Since \(F\) reflects isomorphisms, this implies that \(X\omeet Y =X\) and thus \(X\oleq Y\).
\end{proof}

Let us now suppose that moreover \(\C\) and \(\D\) are pointed categories.

\begin{definition}
    A functor \(F\colon \C \rightarrow \D\) \defn{creates normal subobjects} if for any object \(A\) in \(\C\) and any normal subobject \(k'\colon K' \rightarrowtail F(A)\) in \(\D\) there exists a unique normal subobject \(k \colon K \rightarrowtail A\) in \(\C\) such that \(F(k)=k'\).

\end{definition}

\begin{lemma}\label{reflects_normal}
    Let \(F\colon\C\to \D\) be a functor which preserves limits and reflects isomorphisms. If \(F\) creates normal subobjects, then \(F\) reflects normal monomorphisms.
\end{lemma}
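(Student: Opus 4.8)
The plan is to take a morphism $k\colon K\to A$ in $\C$ for which $F(k)$ is a normal monomorphism in $\D$, and to produce via the creation hypothesis a normal subobject of $A$ whose image under $F$ is $F(k)$; the order-reflecting property of Lemma~\ref{reflects_order} will then force this normal subobject to coincide with $k$, so that $k$ is itself normal.

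First I would check that $k$ is a monomorphism. A morphism $m\colon X\to Y$ is monic precisely when its diagonal $X\to X\times_Y X$ is an isomorphism. Since $F$ preserves finite limits, it preserves the pullback $K\times_A K$ and carries the diagonal of $k$ to the diagonal of $F(k)$; as $F(k)$ is a monomorphism (being normal), this latter diagonal is an isomorphism, and because $F$ reflects isomorphisms the diagonal of $k$ is an isomorphism too. Hence $k$ is monic and represents a subobject $K\oleq A$. Now $F(k)\colon F(K)\rightarrowtail F(A)$ is a normal subobject of $F(A)$, so by the assumption that $F$ creates normal subobjects there is a unique normal subobject $\bar{k}\colon \bar{K}\rightarrowtail A$ in $\C$ with $F(\bar{k})=F(k)$.

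It remains to identify $K$ and $\bar{K}$ as subobjects of $A$. From $F(\bar{k})=F(k)$ we get $F(K)=F(\bar{K})$ as subobjects of $F(A)$, hence both $F(K)\oleq F(\bar{K})$ and $F(\bar{K})\oleq F(K)$. Applying the order-reflecting part of Lemma~\ref{reflects_order} in each direction yields $K\oleq \bar{K}$ and $\bar{K}\oleq K$, so that $K=\bar{K}$: there is an isomorphism $i\colon K\to \bar{K}$ with $\bar{k}\circ i=k$. Since $\bar{k}$ is the kernel of some morphism and $i$ is an isomorphism, $k=\bar{k}\circ i$ is a kernel of that same morphism, so $k$ is a normal monomorphism, as required. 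I expect no serious obstacle here; the only points that need care are verifying that the hypotheses (preservation of limits together with reflection of isomorphisms) already force $F$ to reflect monomorphisms, and the bookkeeping ensuring that ``creates normal subobjects'' delivers a subobject whose $F$-image is literally $F(k)$, so that order-reflection can be invoked.
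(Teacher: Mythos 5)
Your proof is correct and follows essentially the same route as the paper: use the creation hypothesis to obtain a unique normal subobject \(\bar{k}\colon \bar{K}\rightarrowtail A\) with \(F(\bar{k})=F(k)\), then use limit preservation and isomorphism reflection to identify it with the given morphism. Where the paper compresses that identification into a one-line appeal to faithfulness, you spell it out correctly---monicity of \(k\) via the diagonal/kernel-pair argument, and the identification \(K=\bar{K}\) via a double application of Lemma~\ref{reflects_order}---which is precisely the bookkeeping the paper leaves implicit.
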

\begin{proof}
    Suppose \(x\colon X\to A\) is a morphism in \(\C\) such that \(F(x)\colon F(X) \rightarrowtail F(A)\) is a normal monomorphism. Since \(F\) creates normal subobjects, it follows that there exists a unique normal subobject \(k\colon K \rightarrowtail A\) such that \(F(k)=F(x)\). We conclude that \(x\) is a monomorphism and \(x=k\) from the fact that \(F\) reflects isomorphisms and preserves limits (which implies that \(F\) is faithful).
\end{proof}

\begin{lemma}\label{Lemma Preserves Normal Closure}
    Let \(F\colon\C\to \D\) be a functor which preserves limits and reflects isomorphisms. If \(\D\) admits normal closure of monomorphisms and \(F\) creates normal subobjects, then \(\C\) admits normal closure of monomorphisms and \(F\) preserves them.
\end{lemma}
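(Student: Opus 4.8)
The plan is to construct, for each monomorphism $x\colon X\rightarrowtail A$ in $\C$, its normal closure by transporting the corresponding normal closure from $\D$ back along $F$, and then to verify the universal property using the order-reflecting property of $F$ established in Lemma~\ref{reflects_order}.

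First I would record the one preliminary observation the whole argument rests on: since $F$ preserves limits and both $\C$ and $\D$ are pointed, $F$ preserves the zero object (it preserves the terminal object, which in a pointed category coincides with $0$) and hence preserves kernels; in particular $F$ sends normal monomorphisms to normal monomorphisms, and monomorphisms to monomorphisms. Thus, given $x\colon X\rightarrowtail A$ in $\C$, the image $F(x)\colon F(X)\rightarrowtail F(A)$ is a monomorphism in $\D$. As $\D$ admits normal closures, let $k'\colon K'\rightarrowtail F(A)$ be the normal closure $F(X)^{F(A)}$. Because $F$ creates normal subobjects, there is a unique normal subobject $k\colon K\rightarrowtail A$ in $\C$ with $F(k)=k'$, so that $F(K)=K'$.

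Next I would show that $K$ is the normal closure of $X$ in $A$. By construction $K\normal A$. Since $K'=F(X)^{F(A)}$ contains $F(X)$, we have $F(X)\oleq F(K)$, and as $F$ reflects the order of subobjects (Lemma~\ref{reflects_order}) it follows that $X\oleq K$. For minimality, let $L\normal A$ be any normal subobject of $A$ with $X\oleq L$. Then $F(L)$ is normal in $F(A)$ (as $F$ preserves kernels) and $F(X)\oleq F(L)$ (as $F$ preserves the order), so by minimality of the normal closure $K'\oleq F(L)$, that is $F(K)\oleq F(L)$; reflecting the order once more yields $K\oleq L$. Hence $K=X^A$, proving that $\C$ admits normal closures of monomorphisms. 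Preservation is then immediate from the construction: $F(X^A)=F(K)=K'=F(X)^{F(A)}$.

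The only genuinely delicate point is the preliminary observation that $F$ preserves normal monomorphisms, which is where the preservation of limits is used beyond the order-theoretic consequences of Lemma~\ref{reflects_order}; everything else is a routine comparison of the two universal properties through the order-preserving and order-reflecting map of that lemma, together with the creation hypothesis.
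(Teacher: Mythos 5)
Your proof is correct and follows essentially the same route as the paper's: construct the normal closure downstairs in \(\D\), lift it uniquely along \(F\) via the creation hypothesis, and verify both the containment \(X\oleq K\) and the minimality by the order-reflection of Lemma~\ref{reflects_order}. The only difference is cosmetic: you spell out the preliminary fact that \(F\) preserves normal monomorphisms (since it preserves kernels and the zero object in the pointed setting), which the paper uses implicitly when comparing the closure with the image of a competing normal subobject.
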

\begin{proof}
    We start by proving that \(\C\) admits normal closure. Suppose \(x\colon X\rightarrowtail A\) is a monomorphism in \(\C\), then \(F(x) \colon F(X) \rightarrowtail F(A)\) is a monomorphism in \(\D\). We consider \(\overline{F(x)}\colon \overline{F(X)}\rightarrowtail F(A)\), the normal closure of \(F(x)\). Because \(F\) creates normal subobjects, there exists a unique \(\overline{x}\colon \overline{X}\rightarrowtail A\), normal subobject in \(\C\), such that \(F(\overline{x})=\overline{F(x)}\). We want to show that \(\overline{x}\) is the normal closure of \(x\). Since \(F(X) \oleq \overline{F(X)} = F(\overline{X})\), it follows from Lemma~\ref{reflects_order} that \(X\oleq \overline{X}\). Now that we proved that \(X\) is included in the normal subobject \(\overline{X}\), we need to prove that \(\overline{X}\) is the smallest normal subobject of \(A\) containing \(X\). Let \(k\colon K \rightarrowtail A\) be a normal subobject such that \(X\oleq K\). Then \(F(X) \oleq F(K)\) and thus \(F(\overline{X}) = \overline{F(X)}\oleq F(K)\). Using again Lemma~\ref{reflects_order}, we conclude that \(\overline{X}\oleq K\).
    Since normal closure is unique we see that \(F\) preserves normal closure by construction.
\end{proof}


\subsection*{Consequences}

Toward presenting the main results of this section (Theorems~\ref{Thm Char ToN} and~\ref{theorem ToN consequences}), we first recall the notion of a \emph{characteristic} subobject. Introduced by A.~S.~Cigoli and A. Montoli in the context of semi-abelian categories~\cite{CigoliMontoliCharSubobjects}, this concept extends the classical notion of a characteristic subgroup (or a characteristic ideal in Lie algebras) as follows:

\begin{definition}\cite{CigoliMontoliCharSubobjects}\label{definition charact subobj}
    In a semi-abelian category, a subobject \(h\colon H \rightarrowtail G\) is characteristic in \(G\) if and only if, for every split extension of \(B\) by \(G\)
    \[
        \xymatrix{
            G \ar[r]^{k} & A \ar@<.5ex>[r]^{\alpha} & B \ar@<.5ex>[l]^{\beta}
        }
    \]
    there exists a split extension of \(B\) by \(H\)
    \[
        \xymatrix{
            H \ar[r]^{k'} & A' \ar@<.5ex>[r]^{\alpha'} & B \ar@<.5ex>[l]^{\beta'}
        }
    \]
    and a morphism of split extensions as follows:
    \[
        \xymatrix{
            H \ar[r]^{k'} \ar[d]_{h} & A' \ar@<.5ex>[r]^{\alpha'} \ar[d]_{h'} & B \ar@<.5ex>[l]^{\beta'} \ar@{=}[d] \\
            G \ar[r]^{k} & A \ar@<.5ex>[r]^{\alpha} & B \ar@<.5ex>[l]^{\beta}
        }
    \]
\end{definition}

The next proposition establishes that, as in the category of groups, ``is a characteristic subobject of'' is a transitive relation.

\begin{proposition}\cite{CigoliMontoliCharSubobjects}\label{proposition charact subobj transitive}
    In a semi-abelian category, if \(H\) is characteristic in \(K\) and \(K\) is characteristic in \(G\), then \(H\) is characteristic in \(G\).
    \noproof
\end{proposition}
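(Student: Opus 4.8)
The plan is to apply the defining property of characteristic subobjects twice, feeding the split extension produced by the first application into the hypothesis used in the second, and then to compose the two resulting morphisms of split extensions. Throughout, write $h\colon H\rightarrowtail G$ for the inclusion witnessing that $H$ is a subobject of $G$, and recall that it factors as $H\rightarrowtail K\rightarrowtail G$ through the inclusions witnessing $H\oleq K\oleq G$. To verify that $H$ is characteristic in $G$, I would start from an arbitrary split extension of $B$ by $G$, say $(G\xrightarrow{k}A\rightleftarrows B)$ with $\alpha\colon A\to B$ and section $\beta$. Since $K$ is characteristic in $G$, Definition~\ref{definition charact subobj} yields a split extension of $B$ by $K$, say $(K\xrightarrow{k_1}A_1\rightleftarrows B)$, together with a morphism of split extensions whose kernel component is precisely the inclusion $K\rightarrowtail G$ and whose base component is $1_B$.

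The key step is then to apply the hypothesis that $H$ is characteristic in $K$ to this newly produced split extension $(K\xrightarrow{k_1}A_1\rightleftarrows B)$, which has kernel $K$, exactly the shape the definition demands. This produces a split extension of $B$ by $H$, say $(H\xrightarrow{k_2}A_2\rightleftarrows B)$, and a morphism of split extensions whose kernel component is the inclusion $H\rightarrowtail K$ and whose base component is again $1_B$.

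Finally I would compose the two morphisms of split extensions. Since morphisms of split extensions over a fixed base $B$ form a category (they are the morphisms of $\Pt_B(\C)$ that are compatible with the sections), this composite is again a morphism of split extensions, now going from $(H\xrightarrow{k_2}A_2\rightleftarrows B)$ to the original $(G\xrightarrow{k}A\rightleftarrows B)$. Its base component is $1_B\circ 1_B=1_B$, and its kernel component is the composite of the two inclusions $H\rightarrowtail K\rightarrowtail G$, which equals $h$. This is exactly the data required by Definition~\ref{definition charact subobj} to conclude that $H$ is characteristic in $G$.

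There is no genuinely hard step here: the whole argument is a diagram-pasting bookkeeping exercise. The only point that needs care, and the place where the proof could go wrong if one is careless, is the order in which the definition is invoked: one must apply ``$K$ characteristic in $G$'' first, in order to descend from the kernel $G$ to the kernel $K$, and only then apply ``$H$ characteristic in $K$'' to the resulting extension, rather than the other way around. Checking that the left-hand (kernel) legs compose to the subobject $h$ and that the right-hand (base) legs compose to $1_B$ is then immediate from the fact that composites of subobject inclusions are subobject inclusions.
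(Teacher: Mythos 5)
Your proof is correct; the paper itself states this proposition without proof, importing it from~\cite{CigoliMontoliCharSubobjects}, and your argument---applying the definition first to the given extension with kernel \(G\) to descend to kernel \(K\), then applying it to the resulting extension to descend to kernel \(H\), and composing the two morphisms of split extensions---is exactly the standard argument, including the crucial observation about the order of the two applications. The one step you leave implicit, that the kernel component of the composite morphism of split extensions is the composite \(H\rightarrowtail K\rightarrowtail G\), is immediate from the commutativity of the pasted diagram (equivalently, from the uniqueness of the induced map on kernels), so there is no gap.
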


``Being a characteristic subobject'' is stronger than ``being a normal subobject'', as states the next proposition.

\begin{proposition}\cite{CigoliMontoliCharSubobjects}\label{proposition charact implies normal}
    In a semi-abelian category, if \(H\) is a characteristic subobject of \(G\), then \(H\) is a normal subobject of \(G\).
    \noproof
\end{proposition}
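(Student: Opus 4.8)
The plan is to reduce normality of $h\colon H\rightarrowtail G$ to the equation $H^G=H$ for the normal closure — equivalently, by Corollary~\ref{Corollary MM-NC}, to the inequality $[H,G]\oleq H$ — and to produce it by feeding a single, well-chosen split extension into the defining property of a characteristic subobject (Definition~\ref{definition charact subobj}). The natural candidate is the split extension classifying the conjugation action of $G$ on itself. Write $\chi_G\colon G\flat G\to G$ for this internal conjugation action, namely the restriction of the codiagonal $\langle 1_G,1_G\rangle\colon G+G\to G$ along the inclusion $G\flat G=\Ker(\langle 1_G,0\rangle)\rightarrowtail G+G$, and let $\mathbb{E}_G$ be the corresponding split extension of $G$ by $G$ under the Bourn--Janelidze equivalence between internal actions and points (see~\cite{Borceux-Bourn,Bourn-Janelidze}). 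Note that $\mathbb{E}_G$ has kernel $G$ and base $G$, so it is a split extension of the right shape to test characteristicity.

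Applying Definition~\ref{definition charact subobj} to $\mathbb{E}_G$ yields a split extension $\mathbb{E}'$ of $G$ by $H$ together with a morphism of split extensions $\mathbb{E}'\to\mathbb{E}_G$ whose kernel component is $h$ and whose base component is $1_G$. Since the base is fixed, this morphism lives in the fibre $\Pt_G(\C)$ and hence corresponds, under the same equivalence, to a morphism of $G$-actions $(H,\xi')\to(G,\chi_G)$; concretely, $\chi_G\circ(1_G\flat h)=h\circ\xi'$, so that $\chi_G\circ(1_G\flat h)$ factors through $h$. As $1_G\flat h$ is a monomorphism, its image is $G\flat H$, and one computes $\chi_G\circ(1_G\flat h)=\langle 1_G,h\rangle|_{G\flat H}$; thus the condition says that the regular image of $G\flat H$ under $\langle 1_G,h\rangle\colon G+H\to G$ is contained in $H$. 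By Proposition~\ref{image of bemol}, applied with $X=G$ and $Y=H$ (so that $G\ojoin H=G$), this regular image is exactly the normal closure $H^G$. Hence $H^G\oleq H$, which forces $H^G=H$, and therefore $H\normal G$, as desired.

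The main obstacle is the bookkeeping in the middle step: one must identify the conjugation split extension $\mathbb{E}_G$ precisely and verify that the morphism of split extensions returned by characteristicity really does encode a restriction of the conjugation action along $h$ — that is, that ``morphism of split extensions over $1_G$ with kernel $h$'' translates into ``$\chi_G$ carries $G\flat H$ into $H$.'' This rests on the functoriality of the Bourn--Janelidze equivalence on the fibre $\Pt_G(\C)$ and on $1_G\flat h$ being monic, after which Proposition~\ref{image of bemol} does the real work. A more pedestrian route, bypassing the action formalism, is to argue inside the middle object of $\mathbb{E}_G$ directly: the split short five lemma (valid by protomodularity, see~\cite{Borceux-Bourn}) shows the middle comparison map is a monomorphism, so the kernel $H$ of $\mathbb{E}'$ appears as a normal subobject of a subobject of that middle object containing the section image; monotonicity of the Higgins commutator together with Corollary~\ref{Corollary MM-NC} then yields $[H,G]\oleq H$ after transporting back along the relevant projection.
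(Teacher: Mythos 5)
Your proof is correct. Note that the paper itself contains no proof of this proposition: it is quoted from \cite{CigoliMontoliCharSubobjects} with the proof omitted, and your argument is essentially the one from that source---apply characteristicity to the conjugation split extension \((\langle 1_G,0\rangle, G\times G, \pi_2, \Delta_G)\) of \(G\) by \(G\), and read off that the conjugation action restricts to \(H\) (i.e., \(H\) is a \emph{clot}), which in a semi-abelian category forces \(H\) to be a kernel. Where Cigoli and Montoli close the argument by invoking the coincidence of clots and kernels from~\cite{MM-NC}, you instead finish with tools internal to the present paper: the equivariance \(h\circ\xi'=\chi_G\circ(1_G\flat h)\) shows that \(\langle 1_G,h\rangle\colon G+H\to G\) restricted to \(G\flat H\) factors through the monomorphism \(h\), and Proposition~\ref{image of bemol} (with \(X=G\), \(Y=H\), so that \(X\ojoin Y=G\)) identifies the regular image of \(G\flat H\) with the normal closure \(H^G\), giving \(H^G\oleq H\oleq H^G\) and hence normality. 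That is a clean, self-contained substitute for the citation. Two minor remarks. First, your appeal to \(1_G\flat h\) being a monomorphism is unnecessary (and would itself require justification, since coproducts of monomorphisms need not be monomorphisms in a general semi-abelian category): the identity \(\chi_G\circ(1_G\flat h)=\langle 1_G,h\rangle\circ\kappa_{G\flat H}\), where \(\kappa_{G\flat H}\) denotes the kernel inclusion into \(G+H\), follows directly from naturality of kernels applied to \(1_G+h\), which is all your computation needs. Second, the ``pedestrian route'' sketched in your final paragraph is the only vague step---``transporting back along the relevant projection'' does not obviously produce \([H,G]\oleq H\); the standard elementary finish is rather that the monomorphic middle map \(h'\colon A'\rightarrowtail G\times G\) (split short five lemma) exhibits \(A'\) as a reflexive relation on \(G\) (it contains \(\Delta_G=h'\circ\beta'\)), hence an equivalence relation by the Mal'tsev property, whose normalisation is precisely \(H\), which is therefore normal in \(G\). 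Since your main argument is already complete, this does not affect correctness.
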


The converse of this proposition is false, since the category of groups provides a counterexample. However, in any semi-abelian category satisfying~\ToN, the two classes of subobjects do coincide.

\begin{proposition}\label{Proposition Semiabelian (TON)}
    Let \(\C\) be a semi-abelian category in which normality is transitive. Then in \(\C\), every normal subobject is characteristic.
\end{proposition}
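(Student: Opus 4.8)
The plan is to unwind the definition of a characteristic subobject: I fix the given normal subobject $H\normal G$ and an arbitrary split extension of $B$ by $G$, say with total object $A$, split epimorphism $\alpha\colon A\to B$, section $\beta$ and $(G,k)=\Ker(\alpha)$, and I must produce a split extension of $B$ by $H$ mapping compatibly into it. The crucial first step, and the only place where the hypothesis is used, is this: since $G=\Ker(\alpha)$ we have $G\normal A$, so from $H\normal G\normal A$ and \ToN{} we obtain $H\normal A$.

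Next I would take as candidate total object the join $A'\coloneq H\ojoin\beta(B)\oleq A$, restrict $\alpha$ to $\alpha'\colon A'\to B$ and corestrict $\beta$ (which factors through $A'$) to $\beta'\colon B\to A'$. Then $\alpha'\beta'=1_B$, so $(A',\alpha',\beta')$ is a point over $B$, and the inclusion $A'\rightarrowtail A$ is a morphism of points over $B$ which is the identity on $B$ and restricts on kernels to the inclusion $H\oleq G$. Since $\alpha'$ is the restriction of $\alpha$ along the monomorphism $A'\rightarrowtail A$, its kernel is $A'\omeet G$, and visibly $H\oleq A'\omeet G$. The entire argument therefore reduces to the equality $A'\omeet G=H$.

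To establish it I would pass to the quotient $p\colon A'\to A'/H$, which exists because $H\normal A$ forces $H\normal A'$. In any split extension the kernel meets the image of the section trivially, so $\beta(B)\omeet G=0$ and hence $\beta(B)\omeet H=0$. Consider $j\coloneq p\circ\beta'\colon B\to A'/H$: its kernel is $\beta(B)\omeet H=0$, so $j$ is monic (in a semi-abelian category a morphism with trivial kernel is a monomorphism), while its image is $p(\beta(B))=p(H)\ojoin p(\beta(B))=p(A')=A'/H$, so $j$ is a regular epimorphism; thus $j$ is an isomorphism. As $\alpha'$ factors through $p$ (because $H\oleq\Ker(\alpha')$), the induced map $\overline{\alpha'}\colon A'/H\to B$ satisfies $\overline{\alpha'}\circ j=1_B$, whence $\overline{\alpha'}=j^{-1}$ is invertible and $\Ker(\overline{\alpha'})=0$. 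Since $\Ker(\overline{\alpha'})=p(\Ker(\alpha'))$, this forces $\Ker(\alpha')\oleq\Ker(p)=H$, so $A'\omeet G=H$ and the point $(A',\alpha',\beta')$ is the desired split extension of $B$ by $H$. Hence $H$ is characteristic in $G$.

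I expect the main obstacle to be precisely the equality $A'\omeet G=H$. Because the subobject lattice of a semi-abelian object need not be modular, one cannot simply invoke a modular law to intersect $H\ojoin\beta(B)$ with $G$; instead the disjointness $\beta(B)\omeet G=0$ must be combined with the passage to the quotient by $H$—which is available only thanks to \ToN{}, via $H\normal A$—in order to collapse the intersection down to $H$.
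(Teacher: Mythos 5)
Your proof is correct and follows essentially the same route as the paper: the single use of \ToN{} is exactly the paper's step, namely applying it to \(H\normal G=\Ker(\alpha)\normal A\) to conclude \(H\normal A\). The only difference is that where the paper then simply cites Lemma~2.6 of~\cite{CGrayVdL1} to produce the required split extension of \(B\) by \(H\), you prove the needed instance of that lemma by hand---and your construction \(A'=H\ojoin\beta(B)\), with the verification \(\Ker(\alpha')=A'\omeet G=H\) via the quotient \(A'\to A'/H\), the disjointness \(\beta(B)\omeet G=0\), and the standard semi-abelian facts (trivial kernel implies mono, direct images along regular epimorphisms preserve joins, \(\Ker(\overline{\alpha'})=p(\Ker(\alpha'))\)), is a sound, self-contained replacement for that citation.
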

\begin{proof}
    Suppose that \(h \colon H \rightarrowtail G\) is a normal subobject, and consider a split extension \((k, A, \alpha, \beta)\) of \(B\) by \(G\). Since \(\C\) satisfies \ToN, it follows that \(k \circ h\) is a normal monomorphism. By Lemma~2.6 of~\cite{CGrayVdL1}, there exists a split extension \((k', A', \alpha', \beta')\) of \(B\) by \(H\) and a morphism of split extensions as described above.
\end{proof}

\begin{theorem}\label{Thm Char ToN}
    For a semi-abelian category \(\C\), the following conditions are equivalent:
    \begin{tfae}
        \item \(\C\) satisfies \ToN;
        \item each normal subobject is characteristic;
        \item for each object \(B\) in \(\C\), the kernel functor \(\Ker_B \colon \Pt_B(\C) \rightarrow \C\) creates normal subobjects.
    \end{tfae}
\end{theorem}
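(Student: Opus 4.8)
The plan is to prove the cycle (i) $\To$ (ii) $\To$ (iii) $\To$ (i). The implication (i) $\To$ (ii) is already available: it is exactly Proposition~\ref{Proposition Semiabelian (TON)}, which says that in a semi-abelian category satisfying \ToN{}, every normal subobject is characteristic. So the real work lies in the remaining two implications, and I expect the passage between the ``characteristic'' formulation and the ``kernel functor creates normal subobjects'' formulation to carry most of the conceptual weight.

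For (ii) $\To$ (iii), fix an object $B$ and a point $(A,\alpha,\beta)$ over $B$, with kernel $(G,k)=\Ker_B(A,\alpha,\beta)$. Given a normal subobject $h\colon H \rightarrowtail G$ in \C{}, I want to produce a unique normal subobject of the point $(A,\alpha,\beta)$ in $\Pt_B(\C)$ whose image under $\Ker_B$ is $h$. The idea is that by hypothesis (ii), $H$ is characteristic in $G$, so applying Definition~\ref{definition charact subobj} to the split extension $(k,A,\alpha,\beta)$ yields a split extension $(k',A',\alpha',\beta')$ of $B$ by $H$ together with a morphism of split extensions whose kernel component is $h$. This morphism $(A',\alpha',\beta')\rightarrowtail(A,\alpha,\beta)$ should be the desired normal subobject in $\Pt_B(\C)$: normal subobjects in the fibre $\Pt_B(\C)$ are precisely detected by kernels of morphisms of points, and the middle component $h'\colon A'\rightarrowtail A$ is normal in $A$ because $H\normal G\normal A$ together with \ToN{}—note that (ii) is equivalent to \ToN{} via Proposition~\ref{proposition charact implies normal} and the fact that characteristics compose (Proposition~\ref{proposition charact subobj transitive}), so I may freely use transitivity of normality here. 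Uniqueness follows because $\Ker_B$ is faithful and reflects the relevant data; concretely, two normal subobjects of the point restricting to the same $h$ on kernels must agree on the $B$-part (both sit over $B$ via the identity) and hence coincide.

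For (iii) $\To$ (i), suppose $\Ker_B$ creates normal subobjects for every $B$, and take a chain $X\normal Y\normal A$ of normal monomorphisms; I must show $X\normal A$. The strategy is to exhibit $A$ as the middle object of a split extension for which $\Ker_B$ applies. A clean way is to consider the point $(A\times B, \pi_B, \langle 0,1_B\rangle)$ for a suitable $B$, but the more natural route uses that $Y\normal A$ gives a short exact sequence with cokernel $q\colon A \twoheadrightarrow A/Y$; the difficulty is that this need not be split. I therefore expect the main obstacle to be precisely this: creating normal subobjects is phrased over \emph{split} extensions (points), whereas the normal chain $X\normal Y\normal A$ need not split. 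The plan to circumvent this is to use Lemma~\ref{reflects_normal} and Lemma~\ref{Lemma Preserves Normal Closure}, reflecting normality through $\Ker_B$ by realizing the inclusion $X\normal Y$ inside a kernel: since $Y\normal A$, the object $Y$ is itself the kernel of a map out of $A$, and one can build a point over $A/Y$ (or over a pushout/semidirect-product object) whose kernel is $Y$, transport the normal subobject $X$ of $Y$ through the created-normal-subobjects property to a normal subobject of $A$, and check it coincides with $X$. Verifying that the transported subobject is genuinely $X$ (and not a larger normal closure) is where I would spend the most care, appealing to the order-reflecting property of $\Ker_B$ from Lemma~\ref{reflects_order} to pin down the subobject exactly.
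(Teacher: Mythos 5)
Your implications (i) \(\To\) (ii) and (ii) \(\To\) (iii) essentially follow the paper's route. The paper settles (ii) \(\Leftrightarrow\) (iii) with one precise fact from Chapter~6 of~\cite{Borceux-Bourn}: in the diagram of Definition~\ref{definition charact subobj}, the monomorphism \(h'\colon (A',\alpha',\beta')\rightarrowtail (A,\alpha,\beta)\) is normal \emph{in \(\Pt_B(\C)\)} if and only if the composite \(k\circ h\colon H\rightarrowtail A\) is normal in \(\C\). This is what your argument needs, and it is not the same as your claim that ``the middle component \(h'\colon A'\rightarrowtail A\) is normal in \(A\)'': what \ToN{}---which you correctly recover from (ii) via Propositions~\ref{proposition charact subobj transitive} and~\ref{proposition charact implies normal}---supplies is normality of \(H\) in \(A\), from \(H\normal G\normal A\). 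Your uniqueness argument can also be made precise: \(\Ker_B\) preserves limits and reflects isomorphisms (protomodularity), so by Lemma~\ref{reflects_order} the induced map on subobject lattices is order-reflecting, hence injective, and a sub-point is therefore determined by its kernel.

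The genuine gap is (iii) \(\To\) (i). You rightly observe that the extension \(0\to Y\to A\to A/Y\to 0\) need not split, but your proposed repair---``build a point over \(A/Y\) \ldots{} whose kernel is \(Y\)''---is exactly what fails: such a point \emph{is} a splitting of that extension, which in general does not exist, and the fallback (``a pushout/semidirect-product object'') is never constructed. Moreover, Lemmas~\ref{reflects_normal} and~\ref{Lemma Preserves Normal Closure} point the wrong way for this purpose: they lift normality information from \(\C\) into \(\Pt_B(\C)\), whereas here you must land a normality statement back in \(\C\). Two correct ways out. First, the paper's: avoid the direct implication altogether, since (iii) \(\To\) (ii) is immediate (the created normal sub-point supplies precisely the data of Definition~\ref{definition charact subobj}), and (ii) \(\To\) (i) you already have from the two cited propositions. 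Second, if you insist on a direct argument, denormalise: for \(X\normal Y\normal A\) with \(q\colon A\to A/Y\), the kernel pair \((\Eq(q),r_1,\Delta_A)\) is a point over \(A\)---not over \(A/Y\)---whose kernel, embedded via \(\ker(r_1)\) and composed with \(r_2\), is the inclusion \(Y\rightarrowtail A\). Creating the normal sub-point with kernel \(X\normal Y\) and applying the fibre-normality criterion above yields \(X\normal \Eq(q)\); the image of this subobject under the split epimorphism \(r_2\) is then the subobject \(X\) of \(A\), which is normal as a regular image of a normal monomorphism in a semi-abelian category.
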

\begin{proof}
    (i) implies (ii) by Proposition~\ref{Proposition Semiabelian (TON)}. On the other hand, applying Proposition~\ref{proposition charact subobj transitive} proves that (ii) implies (i). The well-known fact (Chapter 6 of~\cite{Borceux-Bourn}) that in the diagram of the proof of Proposition~\ref{Proposition Semiabelian (TON)}, the monomorphism \(h'\) considered in \(\Pt_B(\C)\) is normal if and only if \(k\circ h\) is normal in \(\C\) shows that (ii) and (iii) are equivalent.
\end{proof}

\begin{theorem}\label{theorem ToN consequences}
    Let \(\C\) be a semi-abelian category in which normality is transitive. We then have that
    \begin{enumerate}
        \item for any object \(B\) in \(\C\), the kernel functor \(\Ker_B \colon \Pt_B(\C) \rightarrow \C\) preserves normal closures;
        \item the category \(\C\) is strongly protomodular;
        \item the category \(\C\) satisfies \SH;
        \item the category \(\C\) satisfies \NH.
    \end{enumerate}
\end{theorem}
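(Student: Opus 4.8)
The plan is to treat the four items in turn, extracting as much as possible from Theorem~\ref{Thm Char ToN} and the lemmas proved earlier in this section. Throughout I would use two standard facts about the kernel functor \(\Ker_B\colon \Pt_B(\C)\to\C\): it preserves limits (the limits of \(\Pt_B(\C)\) and the kernels are both computed in \(\C\)) and it reflects isomorphisms (by the short five lemma, a morphism of points over \(B\) that is an isomorphism on kernels is an isomorphism). Since \(\C\) satisfies \ToN, Theorem~\ref{Thm Char ToN}(iii) then tells us that \(\Ker_B\) creates normal subobjects for every \(B\); this is the fact I would feed into the lemmas.

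For item (1), I would apply Lemma~\ref{Lemma Preserves Normal Closure} with \(F=\Ker_B\), taking the domain to be \(\Pt_B(\C)\) and the codomain to be \(\C\). All hypotheses are met: \(\Ker_B\) preserves limits and reflects isomorphisms, the codomain \(\C\) admits normal closures of monomorphisms (being semi-abelian), and \(\Ker_B\) creates normal subobjects by the previous paragraph. The lemma yields exactly that \(\Ker_B\) preserves normal closures.

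For item (2), I would first invoke Lemma~\ref{reflects_normal}: as \(\Ker_B\) preserves limits, reflects isomorphisms and creates normal subobjects, it reflects normal monomorphisms, for every object \(B\). This is precisely Bourn's characterisation of strong protomodularity in terms of the change-of-base functors of the fibration of points~\cite{B4}, so \(\C\) is strongly protomodular. Item (3) is then immediate, since strong protomodularity implies the Smith is Huq condition~\cite{BG,MFVdL}.

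The delicate point is item (4), which I expect to be the main obstacle. The naive commutator-theoretic route—taking \(K\), \(L\normal A\), noting that \(K\ojoin L\) is again normal, and trying to bound the normal closure of \([K,L]\) by distributing \([[K,L],K\ojoin L]\) over the join—fails, because distributivity of the Higgins commutator over joins is exactly algebraic coherence, which \ToN{} does not supply (indeed \(\HSLat\) is not algebraically coherent). Instead I would use that, by Theorem~\ref{Thm Char ToN}(ii), under \ToN{} every normal subobject is characteristic. Given \(K\), \(L\normal A\), both are therefore characteristic, the Higgins commutator of two characteristic subobjects is again characteristic (by naturality of the commutator construction with respect to morphisms of split extensions, in the sense of~\cite{CigoliMontoliCharSubobjects}), and a characteristic subobject is normal by Proposition~\ref{proposition charact implies normal}. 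Hence \([K,L]\normal A\) and \NH{} holds. The crux, and the step I would need to pin down or cite carefully, is precisely the closure of characteristic subobjects under the Higgins commutator, since the direct argument is blocked by the absence of algebraic coherence.
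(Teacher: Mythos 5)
Your treatment of items (1)--(3) coincides with the paper's own proof: item (1) is Lemma~\ref{Lemma Preserves Normal Closure} applied to \(F=\Ker_B\), whose hypothesis of creating normal subobjects comes from \ToN{} via Theorem~\ref{Thm Char ToN}; item (2) is Lemma~\ref{reflects_normal} together with Bourn's definition of strong protomodularity in terms of the change-of-base functors; item (3) is the implication from strong protomodularity to \SH{} of~\cite{BG}. Up to this point the proposal is correct and essentially identical to the paper.

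Item (4), however, contains a genuine gap, and you correctly flagged the suspect step yourself. The claim that the Higgins commutator of two characteristic subobjects is again characteristic is not available in a general semi-abelian category: in~\cite{CigoliMontoliCharSubobjects} and~\cite{acc}, closure of characteristic subobjects under joins and commutators is established only under additional hypotheses such as algebraic coherence---precisely the condition that fails in the paper's motivating \ToN{} example, since \(\HSLat\) is not \AC{} (Theorem~\ref{HSLat not AC}). The obstruction is the very one you noted when dismissing the ``naive'' route: without coherence, the functors \(\Ker_B\) need not preserve joins or Higgins commutators, so ``naturality of the commutator construction with respect to morphisms of split extensions'' does not produce the required subpoint of \((A,\alpha,\beta)\) with kernel \([K,L]\). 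Worse, under \ToN{} Theorem~\ref{Thm Char ToN} makes characteristic and normal subobjects coincide, so in this setting your claimed lemma is \emph{literally equivalent} to \NH{}, the statement being proved; the argument is circular unless an independent proof of that lemma is supplied. The paper's actual proof of (4) is a two-line argument using neither characteristic subobjects nor any distributivity of commutators over joins: by~\cite{MM-NC}, the Higgins commutator \([K,L]\) of two normal subobjects is always normal in the join \(K \ojoin L\); binary joins of normal subobjects are normal in any semi-abelian category~\cite{EverVdLRCT}; hence \([K,L]\normal K\ojoin L\normal A\), and \ToN{} yields \([K,L]\normal A\). Note that this is genuinely different from the route you rejected---no decomposition of \([[K,L],K\ojoin L]\) is ever needed, because the normality of the commutator in the join of the two subobjects is already part of the Mantovani--Metere theory.
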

\begin{proof}
    The first statement is a consequence of combining Proposition~\ref{Proposition Semiabelian (TON)} and Lemma~\ref{Lemma Preserves Normal Closure}.

    For the second statement, we get from Lemma~\ref{reflects_normal} that kernel functors reflect normal monomorphisms, which (in the current Barr-exact context) is \defn{strong protomodularity} by definition---see~\cite{B4}.

    It was proven in~\cite{BG} that any pointed category which is strongly protomodular satisfies the condition \SH. Therefore, the third statement is consequence of the second one.

    We now prove the fourth statement. Let \(K\), \(L  \normal A \) be two normal subobjects of an object \(A\) in \(\C\). From~\cite{MM-NC}, it is known that the Higgins commutator \([K,L]\) is a normal subobject of the join \(K\cup L\). It is known that in semi-abelian categories, binary joins of normal subobjects are normal---see for instance~\cite{EverVdLRCT} for an explicit proof. Therefore, using transitivity of normality, we conclude that \([K,L]\) is normal in \(A\).
\end{proof}

Let us add a characterisation in terms of preservation of normal monomorphisms by pushouts along regular epimorphisms:

\begin{theorem}\label{Thm ToN via Pushouts}
    For a semi-abelian category \(\C\), the following conditions are equivalent:
    \begin{tfae}
        \item \(\C\) satisfies \ToN;
        \item in \(\C\), any pushout of a normal monomorphism along a regular epimorphism is again a (necessarily normal) monomorphism.
    \end{tfae}
\end{theorem}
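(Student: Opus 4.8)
The plan is to prove the two implications separately, reducing the pushout to a cokernel in both cases. The starting observation is that if \(e\colon K \twoheadrightarrow K'\) is a regular epimorphism with kernel \(n\colon N \rightarrowtail K\) and \(m\colon K \rightarrowtail A\) is a normal monomorphism, then \(e\) is the cokernel of \(n\), so by pasting of pushouts the pushout of \(m\) along \(e\) is the cokernel of \(m\circ n\colon N \to A\). Writing \(N\) also for its image \(m(N)\oleq A\), this cokernel is \(A/N^A\), where \(N^A\) is the normal closure of \(N\) in \(A\). I would denote by \(e'\colon A \twoheadrightarrow A/N^A\) the resulting regular epimorphism (so \(\Ker(e')=N^A\)) and by \(m'\colon K'=K/N \to A/N^A\) the second leg of the pushout, so that \(m'\circ e = e'\circ m\).

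For (i) \(\To\) (ii): assuming \ToN, the kernel \(N\normal K\) of \(e\) is normal in \(A\) as well, since \(N\normal K\normal A\). Hence \(N^A=N\), the pushout is simply \(A/N\), and \(m'\colon K/N \to A/N\) is the map induced by \(m\). A direct kernel computation gives \(\Ker(m')=(K\omeet N)/N=0\) (using \(N\oleq K\)), so \(m'\) is a monomorphism; and since \(N\) and \(K\) are both normal in \(A\) with \(N\oleq K\), the subobject \(K/N\) is normal in \(A/N\), making \(m'\) even a normal monomorphism.

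For (ii) \(\To\) (i): let \(N\normal K\normal A\); I must show \(N\normal A\). First I would form \(e=\coker(n)\) and push out along \(m\) as above, obtaining the monomorphism \(m'\colon K/N \rightarrowtail A/N^A\) granted by the hypothesis. The crucial step is to show that the pushout square is also a pullback. Since \(K\normal A\) is a normal subobject containing \(N\), its normal closure satisfies \(N^A\oleq K\). Pulling back the monomorphism \(m'\) along \(e'\) yields a monomorphism \(p_2\colon K'\times_{A/N^A}A \rightarrowtail A\) whose image is the preimage \((e')^{-1}(K/N^A)=K\), the last equality holding precisely because \(\Ker(e')=N^A\oleq K\). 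Thus \(p_2\) and \(m\) are monomorphisms with the same image \(K\oleq A\), and since the comparison map \(\theta\colon K \to K'\times_{A/N^A}A\) induced by \(e\) and \(m\) satisfies \(p_2\circ\theta=m\), it is an isomorphism. Therefore the square is a pullback, so the induced map between the kernels of its vertical regular epimorphisms is an isomorphism; that is, \(N=\Ker(e)\cong\Ker(e')=N^A\) as subobjects of \(A\). Hence \(N=N^A\) is normal in \(A\).

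The hard part will be the pullback step in the converse direction: identifying the pushout with \(A/N^A\) and, above all, computing the preimage \((e')^{-1}(K/N^A)\) to conclude that \(\theta\) is an isomorphism. Everything hinges on the inclusion \(N^A\oleq K\), which forces the comparison map to be invertible and thereby converts the hypothesis ``\(m'\) is a monomorphism'' into the equality \(N=N^A\). The remaining verifications—the pasting of pushouts, the kernel computations, and the normality of \(K/N\) in \(A/N\)—are routine in the semi-abelian setting.
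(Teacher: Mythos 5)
Your proof is correct, and while it shares the paper's overall two-step architecture, the mechanisms differ enough to be worth recording. In the direction (ii) \(\To\) (i), the paper works with the abstract pushout: from monicity of the comparison map it deduces via \cite[Lemma~4.2.4]{Borceux-Bourn} that the induced square of kernels is a pullback, so \(X=X'\omeet A\), and concludes normality because \(X\) is then an intersection of two normal subobjects of \(A'\) (namely \(\Ker(\alpha')\) and \(A\) itself). You instead compute the pushout concretely as \(\Coker(m\circ n)=A/N^A\) by pasting pushout squares, and convert the hypothesis into the equality \(N=N^A\); the inclusion \(N^A\oleq K\) does the work that the intersection-of-normals fact does in the paper. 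In the forward direction, your direct computation \(\Ker(m')=0\) (plus protomodularity) replaces the paper's appeal to \cite[Lemma~4.2.5]{Borceux-Bourn} showing the pushout square is a pullback. Your explicit identification of the pushout as \(A/N^A\) buys a more hands-on argument; the paper's route buys brevity by citing standard lemmas. One genuine simplification is available to you: the pullback step in your converse direction is unnecessary. Once \(m'\) is known to be a monomorphism, commutativity alone yields
\[
N=\Ker(m'\circ e)=\Ker(e'\circ m)=m^{-1}(N^A)=K\omeet N^A=N^A,
\]
the last equality by \(N^A\oleq K\); this bypasses the comparison map \(\theta\) and the preimage computation \((e')^{-1}(K/N^A)=K\) entirely, and is in substance the paper's computation \(X=X'\omeet A\) enriched with your knowledge that \(\Ker(e')=N^A\). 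A minor slip in wording: the regular epimorphisms in your square are the horizontal arrows \(e\) and \(e'\), not the vertical ones, which are the monomorphisms \(m\) and \(m'\).
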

\begin{proof}
    Consider two normal monomorphisms \(\kappa\) and \(v\), a cokernel \(\alpha\) of \(\kappa\), and a pushout of \(v\) and \(\alpha\) as in the diagram
    \[
        \xymatrix{
        X \ar[r]^-{\kappa}\ar@{-->}[d]_{u} & A \ar[r]^{\alpha}\ar[d]^{v} & B \ar[d]^-{w}\\
        X' \ar@{-->}[r]_-{\kappa'} & A' \ar[r]_-{\alpha'} & A'+_A B
        }
    \]
    If (ii) holds then \(w\) is a monomorphism, so that the induced square between the kernels on the left is a pullback~\cite[Lemma~4.2.4]{Borceux-Bourn} and \(X=X'\omeet A\). It follows that the inclusion of \(X\) into \(A'\), the composite \(v\kappa\), is a normal monomorphism, so that (i) holds.

    For the converse, assume that \(v\kappa\) is a normal monomorphism. Then the quotient \(A'/X=\Coker(v\kappa)\) is a pushout of \(v\) and \(\alpha\), as is easily verified by hand. So we may identify the arrow \(\coker(v\kappa)\) with \(\alpha'\), while \(\kappa'=v\kappa\) and \(u=1_X\). Thus, this pushout square is a pullback~\cite[Lemma~4.2.5]{Borceux-Bourn}, so that the induced morphism \(w\) is a monomorphism~\cite[Lemma~4.2.5]{Borceux-Bourn}. It is a normal monomorphism as a direct image of a kernel along a regular epimorphism in a semi-abelian category~\cite{Janelidze-Marki-Tholen}.
\end{proof}

All of the results, obtained in this section, can be applied to the specific case of Heyting semilattices as a corollary. As mentioned in the introduction, the strong protomodularity of \(\HSLat\) was proved in~\cite{Rodelo:Moore}; while the coincidence of Higgins and Huq commutators on normal subobjects was already explained in Proposition~\ref{Corollary (NH)}. The advantage of the present approach is that it works in any \ToN{} semi-abelian category.

\section{Commuting subobjects}\label{section commutators in hslat}
We do not have an explicit description of the commutators in \(\HSLat\), but we can characterise when two given subobjects commute in terms of the implication operation.

\begin{lemma}\label{lemma commute in hslat}
    Let \(X\), \(Y \oleq A\) be two subobjects of a Heyting semilattice \(A\). Then \(X\) and \(Y\) Huq-commute in \(A\) if and only if for all \(x\in X\) and \(y\in Y\) we have that
    \begin{align*}
        (x\iimplies y) = y \qquad \text{and} \qquad (y\iimplies x) =x.
    \end{align*}
\end{lemma}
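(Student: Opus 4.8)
The plan is to characterise the existence of a cooperator $\varphi\colon X\times Y\to A$ on the pair $(X,Y)$ concretely, using the fact that in a semi-abelian variety a cooperator, when it exists, is unique, and that $\HSLat$ is a Mal'tsev variety. First I would recall that $A$ is generated (as a Heyting semilattice) by $X\cup Y$, so any morphism out of $X\times Y$ landing in $A$ is determined by its behaviour on the two coproduct injections. The condition that $\varphi$ is a cooperator forces $\varphi(x,1)=x$ and $\varphi(1,y)=y$. The natural candidate is then $\varphi(x,y)=x\wedge y$, since this is the only reasonable semilattice-compatible map satisfying these boundary conditions; I would verify that $x\mapsto x\wedge y$ and $y\mapsto x\wedge y$ are the correct legs. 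So the whole problem reduces to: \emph{when does the assignment $(x,y)\mapsto x\wedge y$ extend to a morphism of Heyting semilattices $X\times Y\to A$?}

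The key step is therefore to test compatibility of $\varphi(x,y)=x\wedge y$ with the implication operation, i.e. whether
\begin{align*}
    \varphi\bigl((x_1,y_1)\iimplies (x_2,y_2)\bigr) = \varphi(x_1,y_1)\iimplies \varphi(x_2,y_2).
\end{align*}
Since implication in the product $X\times Y$ is computed componentwise, the left-hand side is $(x_1\iimplies x_2)\wedge(y_1\iimplies y_2)$, while the right-hand side is $(x_1\wedge y_1)\iimplies(x_2\wedge y_2)$. Using the standard identity $(a\wedge b)\iimplies c = a\iimplies(b\iimplies c)$ recalled in the introduction, I would expand the right-hand side and compare. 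Specialising to well-chosen elements (for instance taking $x_1=x$, $y_2=y$ with the other two entries equal to $1$) should isolate exactly the two identities $(x\iimplies y)=y$ and $(y\iimplies x)=x$ as the necessary conditions; conversely, assuming these two identities hold for all $x\in X$, $y\in Y$, I would show the general compatibility equation holds, so that $\varphi$ is a genuine morphism and hence a cooperator.

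The main obstacle I anticipate is the verification of sufficiency: showing that the two pointwise identities force the full implication-compatibility equation for \emph{all} pairs, not just the special ones. This requires manipulating expressions like $(x_1\wedge y_1)\iimplies (x_2\wedge y_2)$ using the exponential-law identities $(a\wedge b)\iimplies c=a\iimplies(b\iimplies c)$ together with the hypotheses $x\iimplies y=y$ and $y\iimplies x=x$, and showing the result factors as $(x_1\iimplies x_2)\wedge(y_1\iimplies y_2)$. Care is needed because the identities only control "mixed" implications between $X$ and $Y$, so I would repeatedly rewrite a mixed implication $x\iimplies y$ (or $y\iimplies x$) whenever one surfaces, while being careful that intermediate terms such as $x_2\iimplies y$ genuinely have arguments in the right subobjects. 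I would also double-check that the hypotheses are preserved under the meet operation (so that, e.g., $x_1\wedge y_1$ interacts correctly), which follows since $X$ and $Y$ are closed under $\wedge$. The meet-compatibility and unit-preservation of $\varphi$ are routine and I would dispatch them quickly, concentrating the detailed computation on the implication axiom.
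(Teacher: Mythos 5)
Your proposal follows the paper's proof essentially step for step: the cooperator is forced to be \(\varphi(x,y)=x\wedge y\) by meet-preservation, compatibility with ``\(\iimplies\)'' reduces to the equation \((x\iimplies x')\wedge(y\iimplies y')=(x\iimplies(y\iimplies x'))\wedge(y\iimplies(x\iimplies y'))\), specialising two of the four entries to \(1\) yields necessity, and the hypotheses collapse the mixed implications \(y\iimplies x'\) and \(x\iimplies y'\) so that sufficiency is a one-line computation rather than the obstacle you anticipated. The only blemishes are cosmetic: your side remark that \(A\) is generated by \(X\cup Y\) is false for arbitrary subobjects (only \(X\ojoin Y\) is) but is never actually used, and the maps \((1_X,0)\), \((0,1_Y)\) are not coproduct injections, though your determination argument via \((x,y)=(x,1)\wedge(1,y)\) is sound.
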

\begin{proof}
    We first suppose that \(X\) and \(Y\) Huq-commute in \(A\). This, by definition, means that there exists a unique morphism \(\varphi\colon X \times Y \rightarrow A\) making the following diagram commute.
    \begin{center}
        \begin{tikzcd}[row sep = large]
            X \arrow[r, "{(1_X,0)}"] \arrow[rd, tail] & X \times Y \arrow[d, "\varphi" description] & Y \arrow[l, "{(0,1_Y)}"'] \arrow[ld, tail] \\
            & A
        \end{tikzcd}
    \end{center}
    This implies that \(\varphi(x,y)=x\wedge y\) for all \((x,y)\in X \times Y\), because indeed
    \begin{align*}
        \varphi(x,y)=\varphi((x,1)\wedge (1,y)) = \varphi(x,1)\wedge \varphi(1,y) = x\wedge y.
    \end{align*}
    Next, we have that for all \(x\), \(x'\in X\) and \(y\), \(y'\in Y\)
    \begin{align*}
        \varphi((x,y)\iimplies (x',y'))= \varphi(x\iimplies x', y \iimplies y') = (x\iimplies x')\wedge (y\iimplies y')
    \end{align*}
    and
    \begin{align*}
        \varphi(x,y)\iimplies \varphi(x',y') & = (x\wedge y) \iimplies (x'\wedge y') \\ &= ((x\wedge y)\iimplies x')\wedge ((x\wedge y)\iimplies y') \\ &=(x\iimplies(y\iimplies x'))\wedge (y\iimplies (x\iimplies y')).
    \end{align*}
    Thus \(\varphi\) being a morphism implies that
    \begin{align}\label{bla}
        (x\iimplies x')\wedge (y\iimplies y')=(x\iimplies(y\iimplies x'))\wedge (y\iimplies (x\iimplies y'))
    \end{align}
    for all \(x\), \(x'\in X\) and \(y\), \(y'\in Y\). In particular, taking \(x=1=y'\) gives the first announced equality, while taking \(x'=1=y\) gives the second one.

    For the converse, the map \(\varphi\) has to be defined as \(\varphi(x,y)\coloneq x\wedge y\) for the same reasons as before. It obviously preserves the constant and the meet, and it makes the desired diagram commute. However, it preserves the implication if and only if the equation~\eqref{bla} holds for all \(x\), \(x' \in X\) and \(y\), \(y'\in Y\), which it does under the given assumptions.
\end{proof}

From this lemma, we can easily reprove the following fact we already knew from Section~\ref{section arithmetical}.

\begin{corollary}\label{corollary hslat no abelian objects}
    The category \(\HSLat\) of Heyting semilattices has no abelian objects besides the zero object.
\end{corollary}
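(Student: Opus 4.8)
The plan is to apply Lemma~\ref{lemma commute in hslat} in the degenerate case $X = Y = A$. By definition, an abelian object $A$ is precisely one that Huq-commutes with itself, so feeding the pair $(A,A)$ into the lemma records exactly when $A$ can be abelian.

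First I would observe that, with $X = Y = A$, the two commutation equations of Lemma~\ref{lemma commute in hslat} coincide: both assert that $(x \iimplies y) = y$ for all $x$, $y \in A$ (the second is merely the first with the two variables interchanged). Hence $A$ is abelian if and only if the identity $(x \iimplies y) = y$ holds throughout $A$.

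The decisive step is then to specialise $y = x$. The commutation condition yields $(x \iimplies x) = x$, whereas the Heyting semilattice axioms record that $(x \iimplies x) = 1$. Comparing the two forces $x = 1$ for every $x \in A$, so $A$ collapses to the one-element Heyting semilattice, that is, the zero object.

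I do not expect any genuine obstacle, since all of the substance is already packaged into Lemma~\ref{lemma commute in hslat}, and the corollary reduces to that specialisation combined with the idempotency-of-implication axiom. Should one prefer to bypass the lemma and argue directly in the \emph{Eckmann--Hilton} spirit alluded to earlier, one could instead begin from the interchange identity~\eqref{bla} with $X = Y = A$ and substitute $x = 1 = y'$ (or $x' = 1 = y$) to recover $(x \iimplies y) = y$; but routing the argument through the lemma keeps it shortest.
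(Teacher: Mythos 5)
Your proposal is correct and follows essentially the same route as the paper, which likewise applies Lemma~\ref{lemma commute in hslat} with \(X=Y=A\) and concludes from \(1=(a\iimplies a)=a\) that every element equals \(1\). The only cosmetic difference is that the paper substitutes \(x=y=a\) immediately rather than first noting the two commutation equations coincide.
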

\begin{proof}
    Let \(A\) be a Heyting semilattice such that \([A,A]=0\). Let \(a\in A\), then Lemma~\ref{lemma commute in hslat} implies that \(1=(a\iimplies a)=a\).
\end{proof}

We are also able to show that centralisers (Definition~\ref{Def Centraliser}) exist in the category of Heyting semilattices. Let \(X\oleq A\) be a subobject of a Heyting semilattice~\(A\). The centraliser of \(X\) in \(A\) is
\begin{align*}
    Z_A(X)=\{a\in A\mid \text{for each \(x\) in \(X\), \((x\iimplies a)=a\) and \((a\iimplies x)=x\)}\}\text{.}
\end{align*}
It is routine to check that it is closed under the meet in \(A\): for \(a\), \(a'\) in \(Z_A(X)\) and \(x\) in \(X\), use the identities \(x\iimplies (a\wedge a') = (x\iimplies a) \wedge (x\iimplies a')\) and \({(a\wedge a') \iimplies x} = {a\iimplies (a'\iimplies x)}\). To show it is closed under implications, for \(a\), \(a'\) and \(x\) as before, note that \(x\iimplies (a \iimplies a') = (x\iimplies a) \iimplies (x \iimplies a')=a\iimplies a'\), and \({(a\iimplies a') \iimplies x} = {(a\iimplies a') \iimplies (a \iimplies x)} = a\iimplies (a'\iimplies x) = a\iimplies x=x\). It now easily follows from Lemma~\ref{lemma commute in hslat} that is the largest subobject of \(A\), Huq-commuting with~\(X\).

\begin{proposition}\label{Proposition Centralisers}
    The category \(\HSLat\) is \defn{algebraically cartesian closed} \ACC. Moreover, centralisers of normal subobjects are normal in \(\HSLat\).
\end{proposition}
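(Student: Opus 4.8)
The plan is to establish the two assertions separately. For algebraic cartesian closedness I would invoke the known equivalence, valid for pointed (more specifically semi-abelian) Mal'tsev categories, between being \ACC{} and admitting centralisers of subobjects. The construction carried out immediately before the statement exhibits, for \emph{every} subobject \(X\oleq A\), the set \(Z_A(X)=\{a\in A\mid (x\iimplies a)=a \text{ and } (a\iimplies x)=x \text{ for all } x\in X\}\) as the largest subobject Huq-commuting with \(X\). Since this produces centralisers of all subobjects of all objects of \(\HSLat\), algebraic cartesian closedness follows at once from that characterisation; no further work is needed for the first sentence.

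For the second assertion, let \(X\normal A\) be a normal subobject, so that by Lemma~\ref{Lemma Normal is Filter} the subset \(X\) is a filter of \(A\), and, by the same lemma, it suffices to prove that \(Z_A(X)\) is itself a filter. The verifications preceding the statement already show that \(Z_A(X)\) is a sub-Heyting semilattice (closed under meet and implication, and containing \(1\)), so the only point remaining is \textbf{upward closure}: if \(a\in Z_A(X)\) and \(a\leq b\) in \(A\), then \(b\in Z_A(X)\). By Lemma~\ref{lemma commute in hslat}, this reduces to checking, for every \(x\in X\), the two equalities \(x\iimplies b=b\) and \(b\iimplies x=x\). I expect the first of these to be the main obstacle.

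The equality \(b\iimplies x=x\) is the easy one: from \(a\in Z_A(X)\) we have \(a\iimplies x=x\), and since implication is antitone in its first argument and \(a\leq b\), we get \(x\leq b\iimplies x\leq a\iimplies x=x\). For \(x\iimplies b=b\), one always has \(b\leq x\iimplies b\), so it is enough to prove \(x\iimplies b\leq b\); the trick I would use is to set \(w\coloneq (x\iimplies b)\iimplies b\). On one hand \(x\leq (x\iimplies b)\iimplies b=w\) by the identity \(x\leq(x\iimplies y)\iimplies y\), so \(w\in X\) because \(x\in X\) and \(X\) is up-closed. On the other hand \(b\leq (x\iimplies b)\iimplies b=w\) by the identity \(y\leq x\iimplies y\), whence \(a\leq b\leq w\); now \(a\in Z_A(X)\) and \(w\in X\) give \(a\iimplies w=w\), while \(a\leq w\) forces \(a\iimplies w=1\), so \(w=1\). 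Thus \((x\iimplies b)\iimplies b=1\), that is \(x\iimplies b\leq b\), as required. Combined with the already-established meet-closedness, this shows \(Z_A(X)\) is a filter and hence, by Lemma~\ref{Lemma Normal is Filter}, a normal subobject of \(A\). Notably, the argument uses only the condition \(a\iimplies x=x\) together with the filter property of \(X\); the symmetric condition is not needed for the closure step.
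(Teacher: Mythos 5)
Your proposal is correct and follows essentially the same route as the paper: the first assertion is obtained by the same appeal to the equivalence of \ACC{} with the existence of centralisers (Proposition~1.5 of Bourn--Gray), and your up-closure argument is the paper's, with your auxiliary element \(w=(x\iimplies b)\iimplies b\) being exactly the paper's \((x\iimplies a)\iimplies a\) up to renaming (\(a,b\) playing the roles of the paper's \(z,a\)), including the easy half via antitonicity of \(\iimplies\) in the first argument.
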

\begin{proof}
    The first part of the claim follows from Proposition 1.5 of~\cite{Bourn-Gray} which shows that in our context \ACC{} is equivalent to the existence of centralisers.

    To prove that \(Z_A(X)\) is normal (i.e., is a filter) as soon as \(X\) is, suppose that \(a\geq z\) for some \(a\in A\) and \(z\in Z_A(X)\). To see that \(a\in Z_A(X)\), let \(x\) be an element of \(X\). Since \(x\leq (x\iimplies a)\iimplies a\) it follows that \((x\iimplies a)\iimplies a\) is in \(X\). Therefore, we have that
    \begin{align*}
        ((x\iimplies a)\iimplies a) = (z \iimplies ((x\iimplies a)\iimplies a)) =  1
    \end{align*}
    where the last equality follows from the fact that \(z\leq a\leq (x\iimplies a)\iimplies a\). This means that \((x\iimplies a)\leq a \) and thus \((x\iimplies a)=a\) (because \((x \iimplies a)\geq a\) in general). On the other hand, \(z\leq a\) implies that
    \begin{align*}
        (a\iimplies x) \leq (z \iimplies x) =x
    \end{align*}
    and thus \((a\iimplies x) =x\) which concludes the proof.
\end{proof}

The following examples show that centralisers of non-normal monomorphisms need not be normal. Let us recall that in~\cite{MR0245486}, Nemitz called the term
\[
    x\psj y=((x\iimplies y)\iimplies y) \wedge ((y\iimplies x)\iimplies x)
\]
the \defn{pseudo-join} of \(x\) and \(y\) and showed that it has the following properties:
\begin{enumerate}
    \item \(x \leq x\psj y\) and \(y \leq x \psj y\);
    \item \(x\leq y\) if and only if \(x\psj y=y\);
    \item \(x\psj x=x\) and \(x\psj y= y\psj x\);
    \item \(x\psj (x\wedge y)=x=x\wedge(x\psj y)\).
\end{enumerate}
Let us also mention that it has the perhaps surprising property
\[
    x\iimplies (y\psj z) = (x\iimplies y)\psj (x\iimplies z).
\]

\begin{example}\label{Normal necessary for normal centraliser}
    If \(A\) is the Heyting semilattice with Hasse diagram
    \[
        \xymatrix@!=0ex{
        & 1 &\\
        & z\ar@{-}[u] &\\
        x\ar@{-}[ur] & & y \ar@{-}[ul]\\
        & 0\ar@{-}[ul]\ar@{-}[ur] &
        }
    \]
    then \(X=\{x,1\}\) and \(Y=\{y,1\}\) Huq-commute in \(A\) because \(x\psj y=1\). Here we see that the pseudo-join is not the join. However, noting that \(x\psj z=z\) and \(x \psj 0=x\) it follows that \(Y\) is the centraliser of \(X\). However, \(Y\) is not up-closed in \(A\), so \(Z_A(X)\)  is not a normal subobject of \(X\).
\end{example}

Moreover, the pseudo-join can be used to capture Huq-commutativity:

\begin{proposition}\label{pseudo-join commutativity}
    A pair of sub-Heyting semilattices \(X\), \(Y\oleq A\) Huq-commute if and only if \(x\psj y=1\) for all \(x\in X\) and \(y\in  Y\).
\end{proposition}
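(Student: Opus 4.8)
The plan is to reduce everything to the elementwise characterisation already established in Lemma~\ref{lemma commute in hslat}, which says that \(X\) and \(Y\) Huq-commute if and only if \((x\iimplies y)=y\) and \((y\iimplies x)=x\) for all \(x\in X\) and \(y\in Y\). Thus it suffices to show, for a fixed pair \(x\), \(y\), that the conjunction of these two equalities is equivalent to \(x\psj y=1\). The whole argument then rests on two elementary observations valid in any Heyting semilattice: first, that \(a\wedge b=1\) forces \(a=1\) and \(b=1\) (since \(1\) is the top element and \(a\wedge b\leq a\), \(a\wedge b\leq b\)); and second, that \(a\iimplies b=1\) is equivalent to \(a\leq b\), which is immediate from the adjunction \(1\wedge a\leq b\Leftrightarrow 1\leq a\iimplies b\).

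For the forward implication I would simply substitute. Assuming \((x\iimplies y)=y\) and \((y\iimplies x)=x\), the definition
\[
    x\psj y=((x\iimplies y)\iimplies y)\wedge((y\iimplies x)\iimplies x)
\]
collapses to \((y\iimplies y)\wedge(x\iimplies x)=1\wedge 1=1\), using that \(a\iimplies a=1\) is one of the defining axioms.

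For the converse I would start from \(x\psj y=1\). By the first observation above, both meetands equal \(1\), so \((x\iimplies y)\iimplies y=1\) and \((y\iimplies x)\iimplies x=1\). By the second observation these translate into \((x\iimplies y)\leq y\) and \((y\iimplies x)\leq x\). Combining these with the always-valid inequalities \(y\leq x\iimplies y\) and \(x\leq y\iimplies x\) recalled in the preliminaries yields the equalities \((x\iimplies y)=y\) and \((y\iimplies x)=x\), and Lemma~\ref{lemma commute in hslat} finishes the job.

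I do not expect a genuine obstacle here: the statement is essentially a translation of Lemma~\ref{lemma commute in hslat} through the definition of the pseudo-join, and every step is a one-line manipulation with the standard Heyting identities. The only point requiring a little care is the converse, where one must remember to invoke the always-true inequality \(y\leq x\iimplies y\) (rather than trying to derive the equality from \((x\iimplies y)\leq y\) alone), so that the two opposite inequalities combine into an equality.
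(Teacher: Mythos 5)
Your proof is correct and follows essentially the same route as the paper's: both reduce to Lemma~\ref{lemma commute in hslat} and exploit the equivalence of \(x\iimplies y=y\) with \((x\iimplies y)\iimplies y=1\) (via the ambient inequality \(y\leq x\iimplies y\)), together with the fact that a meet equals \(1\) exactly when both meetands do. You merely spell out in full the steps the paper compresses into one line, including the point of care about \(y\leq x\iimplies y\) that the paper's ``if and only if'' silently relies on.
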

\begin{proof}
    Just observe that \(x\iimplies y=y\) if and only if \((x\iimplies y)\iimplies y =1\), and \[{(x\iimplies y)\iimplies y=1=(y\iimplies x)\iimplies x}\]
    if and only if \(x\psj y=1\).
\end{proof}

We end with a general fact about commuting pairs of subobjects in arithmetical categories which applies in particular to Heyting semilattices.

\begin{proposition}\label{Prop Commute iff}
    Let \(\C\) be a semi-abelian arithmetical category. Two subobjects \(X\), \(Y\oleq A\) Huq-commute if and only if \(X \omeet Y =0\) and \(X\), \(Y\normal X \ojoin Y\).
\end{proposition}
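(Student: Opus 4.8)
The plan is to work inside the join \(B\coloneq X\ojoin Y\) and to translate Huq-commutativity into a statement about the normal closures \(X^Y\) and \(Y^X\) of \(X\) and \(Y\) in \(B\). Recall that \(X\) and \(Y\) Huq-commute precisely when \([X,Y]^{\Huq}=0\), which by the second item of Remarks~\ref{Remarks Higgins} is equivalent to \([X,Y]=0\). Since \(\C\) is arithmetical, Theorem~\ref{theorem arithm commu is intersection} gives \([X,Y]=X^Y\omeet Y^X\). Hence the entire statement reduces to proving
\[
    X^Y\omeet Y^X=0 \quad\Longleftrightarrow\quad X\omeet Y=0 \text{ and } X,Y\normal B.
\]

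For the implication ``\(\Leftarrow\)'', if \(X\) and \(Y\) are normal in \(B\) then each coincides with its own normal closure in \(B\), so \(X^Y=X\) and \(Y^X=Y\); thus \(X^Y\omeet Y^X=X\omeet Y=0\). (This direction in fact only uses Corollary~\ref{inequality of Higgins} and does not need arithmeticity.) For ``\(\Rightarrow\)'', assume \(X^Y\omeet Y^X=0\). Since \(X\oleq X^Y\) and \(Y\oleq Y^X\), we immediately get \(X\omeet Y\oleq X^Y\omeet Y^X=0\). It remains to show \(X\normal B\), the argument for \(Y\) being symmetric.

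The key device is the cokernel \(q\colon B\to B/Y^X\) of the normal monomorphism \(Y^X\rightarrowtail B\). Because \(Y\oleq Y^X\) we have \(q(Y)=0\), so, using that regular epimorphisms preserve joins of subobjects, \(B/Y^X=q(B)=q(X)\ojoin q(Y)=q(X)\); as \(X\oleq X^Y\) this forces \(q(X^Y)=B/Y^X=q(X)\). On the other hand, the restriction of \(q\) to \(X^Y\) has kernel \(X^Y\omeet Y^X=0\), hence is a monomorphism (trivial kernels detect monomorphisms in the protomodular setting). A monomorphism reflects the order on subobjects, so \(q(X)=q(X^Y)\) together with \(X\oleq X^Y\) yields \(X=X^Y\), which is exactly the statement that \(X\) is normal in \(B\).

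The main obstacle is this forward direction, and specifically the passage from ``the normal closures meet trivially'' to ``\(X\) and \(Y\) are themselves normal''. The device that makes it work is the observation that \(q\) becomes monomorphic once restricted to \(X^Y\), precisely because \(X^Y\) and the kernel \(Y^X\) of \(q\) intersect trivially; the only external inputs are that regular images preserve joins and that restricting a cokernel along a subobject inclusion computes the intersection with its kernel, both standard in the semi-abelian setting.
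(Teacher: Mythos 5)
Your proof is correct, and its forward direction takes a genuinely different route from the paper's. Both arguments hinge on the same arithmeticity input, namely the identity \([X,Y]=X^Y\omeet Y^X\) of Theorem~\ref{theorem arithm commu is intersection}, and both converses are essentially identical. But to get \(X=X^Y\) from \([X,Y]=0\), the paper uses higher commutator calculus: perfectness of \(X\) gives \(X^Y=[X,X\ojoin Y]\) by Proposition~\ref{Proposition Normal Closure}, which is then expanded via the join decomposition formula and duplicate-removal rules of~\cite{HVdL} into \([X,X,Y]\ojoin[X,X]\ojoin[X,Y]\oleq X\). You avoid ternary commutators entirely: quotienting \(B=X\ojoin Y\) by \(Y^X\), you note that \(q\) restricted to \(X^Y\) is monic (its kernel is \(X^Y\omeet Y^X=0\), and trivial kernels detect monomorphisms by protomodularity) while \(q(X)=q(B)=q(X^Y)\) (since direct images, being left adjoint to inverse images, preserve joins and \(q(Y)=0\)), so order-reflection along this mono forces \(X=X^Y\). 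Your route is more elementary---its only nonstandard ingredient is Theorem~\ref{theorem arithm commu is intersection}, with the rest being basic regular/protomodular facts---whereas the paper's calculation exhibits the role of perfectness and fits the ternary-commutator machinery used elsewhere in the paper (e.g.\ in the second proof of Theorem~\ref{HSLat is SSH} and in the section on \W{}). Two further refinements of yours are sound and worth noting: you obtain \(X\omeet Y=0\) directly from \(X\omeet Y\oleq X^Y\omeet Y^X\) rather than via perfectness of \(X\omeet Y\) as in the paper, and your parenthetical observation that the ``if'' direction needs only Corollary~\ref{inequality of Higgins} (since \([X,Y]\oleq X^Y\omeet Y^X=X\omeet Y=0\)) correctly shows that half of the proposition holds in any semi-abelian category, whereas the paper's converse invokes arithmeticity it does not strictly need.
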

\begin{proof}
    First we note that \(X\), \(Y\normal X \ojoin Y\) is equivalent to \(X^Y=X\) and \(Y^X=Y\). The subobjects \(X\) and \(Y\) of \(A\) Huq-commute if and only if the Higgins commutator \([X,Y]\oleq A\) vanishes.

    Now, using arithmeticity,
    \[
        X\omeet Y=[X\omeet Y,X\omeet Y]\oleq[X,Y]=0
    \]
    by Theorem~\ref{theorem arithm commu is intersection}. Hence by Proposition~\ref{Proposition Normal Closure}, together with the join decomposition formula and the rule for removing duplicates in a commutator (2.21 and 2.22 in~\cite{HVdL}) we find
    \begin{align*}
        X^Y & =[X,X\ojoin Y]=[X,X,Y]\ojoin [X,X]\ojoin [X,Y] \\
            & \oleq [X,Y]\ojoin [X,X]= [X,X]=X
    \end{align*}
    while of course \(X\oleq X^Y\). The equality \(Y^X=Y\) follows similarly.

    For the converse, we calculate \([X,Y]=X^Y\omeet Y^X=X\omeet Y=0\), again using arithmeticity.
\end{proof}

\begin{remark}\label{Remark Join in HSLat}
    At this point, it is useful to notice that in the category \(\HSLat\), the join \(X \ojoin Y\) of commuting subobjects \(X\), \(Y\oleq A\) is given by the set \(\{x \wedge y \mid \text{\(x \in X\) and \(y \in Y\)}\}\). Indeed, this set contains \(X\) and \(Y\), and is generated by them. Since it is closed under binary meets, it remains to verify that it is also closed under implications. To this end, suppose that \(x\), \(x' \in X\) and \(y\), \(y' \in Y\). Then
    \begin{align*}
        (x \wedge y) \iimplies (x' \wedge y') & = x \iimplies (y \iimplies (x' \wedge y'))               \\
                                              & = x \iimplies ((y \iimplies x') \wedge (y \iimplies y')) \\
                                              & = x \iimplies (x' \wedge (y \iimplies y'))               \\
                                              & = (x \iimplies x') \wedge (x\iimplies(y \iimplies y'))   \\
                                              & = (x \iimplies x') \wedge (y \iimplies y')
    \end{align*}
    where in the third and in the last equality we apply Lemma~\ref{lemma commute in hslat}.
\end{remark}

\section{\texorpdfstring{The conditions \SSH{} and \W{}}{The conditions (SSH) and (W)}}\label{section SSH and W}

It is known~\cite{MFVdL3} that all arithmetical semiabelian categories---including \(\HSLat\)---satisfy the \emph{Smith is Huq} condition \SH{} mentioned in the comment following Proposition~\ref{Smith implies Huq}. In the same paper, the authors introduce two new conditions denoted \SSH{} and~\W{} with the following implications between them:
\begin{align*}
    \W \To \SSH \To \SH.
\end{align*}
There it was claimed that the category \(\HSLat\) does not satisfy \SSH{}, and thus neither would it satisfy \W{}. However, the proof given there is inaccurate. (The function \(\gamma\) in that proof is easily seen not to preserve the operation ``\(\wedge\)'', so it is not a morphism.) The goal of this section is to provide a clarification on the matter. In Theorem~\ref{HSLat is SSH}, we prove that \(\HSLat\) does in fact satisfy \SSH{} (so not only the proof in~\cite{MFVdL3} is wrong, the result is wrong as well), while Corollary~\ref{HSLat is not W} confirms that it does not satisfy \W{}. This proves that the implication \(\W \To \SSH \) cannot be reversed, but leaves the question open for the implication \(\SSH \To \SH\).

\subsection*{\texorpdfstring{The condition \SSH{}}{The condition (SSH)}}
In this section we show that the category of Heyting semilattices satisfies \SSH{}.
Recall that a semi-abelian category satisfies \SSH{} if every change-of-base functor of fibration of points reflects Huq-commuting pairs of morphisms.

Examples of categories which satisfy this condition are groups, any category of interest in the sense of Orzech~\cite{Orzech}, any variety of non-associative algebras over a field~\cite[Proposition 7.7.3]{CocoThesis}, any abelian category, and Heyting semilattices---as we shall demonstrate now:

\begin{theorem}\label{HSLat is SSH}
    The category \(\HSLat\) of Heyting semilattices satisfies \SSH{}.
\end{theorem}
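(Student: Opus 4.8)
The plan is to unwind the definition: fixing an object \(B\), I must show that the kernel functor \(\Ker_B\colon \Pt_B(\HSLat)\to\HSLat\) reflects Huq-commuting pairs. Since a pair of morphisms Huq-commutes precisely when the pair of its regular images does, and \(\Ker_B\) preserves regular-epi/mono factorisations, I would first reduce to two \emph{subobjects} \(P_1\), \(P_2\oleq P\) in \(\Pt_B(\HSLat)\). Writing \(P=(A,\alpha,\beta)\), \(X=\Ker(\alpha)\) and \(A_i\oleq A\) for the underlying sub-Heyting semilattices (each containing \(\beta(B)\) with \(\alpha(A_i)=B\)), the subobjects \(\Ker_B(P_i)\) are exactly the filters \(X_i=A_i\omeet X\). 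The hypothesis to exploit is that \(X_1\), \(X_2\) Huq-commute in \(\HSLat\), which by Lemma~\ref{lemma commute in hslat} means \((x_1\iimplies x_2)=x_2\) and \((x_2\iimplies x_1)=x_1\) for all \(x_1\in X_1\), \(x_2\in X_2\); the goal is to produce a cooperator \(\varphi\colon A_1\times_B A_2\to A\) witnessing commutativity of \(P_1\), \(P_2\) in \(\Pt_B(\HSLat)\).

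Next I would pin down \(\varphi\). As cooperators are unique, \(\varphi\) is forced on generators: the two injections send \(x_1\in X_1\), \(x_2\in X_2\) to \((x_1,1)\), \((1,x_2)\), so \(\varphi(x_1,x_2)=x_1\wedge x_2\), while the section forces \(\varphi(\beta b,\beta b)=\beta b\). Using the canonical decomposition of Remark~\ref{Remark Split Extension} inside the point \(A_1\times_B A_2\) over \(B\), every element \((a_1,a_2)\) with \(\alpha(a_1)=\alpha(a_2)=b\) factors as \((\ell_1\iimplies\beta b,\ell_2\iimplies\beta b)\wedge(k_1,k_2)\), where \(\ell_i=a_i\iimplies\beta b\) and \(k_i\) lie in \(X_i\). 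Applying the constraints yields the forced formula
\[
\varphi(a_1,a_2)=\bigl((\ell_1\wedge\ell_2)\iimplies\beta b\bigr)\wedge k_1\wedge k_2 .
\]
One then checks that \(\varphi\) is well defined (independent of the decomposition) and satisfies the cooperator identities \(\varphi(a_1,\beta\alpha(a_1))=a_1\) and \(\varphi(\beta\alpha(a_2),a_2)=a_2\), which fall out of the Heyting identities directly.

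The hard part will be verifying that \(\varphi\) is a morphism of Heyting semilattices, concretely that it preserves \(\iimplies\). This mirrors the implication-preservation step in the proof of Lemma~\ref{lemma commute in hslat}: expanding \(\varphi\bigl((a_1,a_2)\iimplies(a_1',a_2')\bigr)\) and \(\varphi(a_1,a_2)\iimplies\varphi(a_1',a_2')\) and comparing them, after using \((u\wedge v)\iimplies w=u\iimplies(v\iimplies w)\) and \(u\iimplies(v\wedge w)=(u\iimplies v)\wedge(u\iimplies w)\), reduces to an equation of the same shape as~\eqref{bla} in the two families of kernel elements. That equation holds precisely because \(X_1\) and \(X_2\) Huq-commute, i.e.\ because \((x_1\iimplies x_2)=x_2\) and \((x_2\iimplies x_1)=x_1\) (equivalently \(x_1\psj x_2=1\), Proposition~\ref{pseudo-join commutativity}). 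Once preservation of \(\iimplies\) is secured, \(\varphi\) is a cooperator, so \(P_1\) and \(P_2\) Huq-commute and \(\Ker_B\) reflects the pair; this step I expect to absorb essentially all of the work.

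A more conceptual alternative is worth noting. By Proposition~\ref{Prop Commute iff} the commuting of \(X_1\), \(X_2\) gives \(X_1\omeet X_2=0\) and \(X_i\normal X_1\ojoin X_2\); since a point with trivial kernel is the zero object one gets \(P_1\omeet P_2=0\), and as \(\HSLat\) is strongly protomodular (Theorem~\ref{theorem ToN consequences}) the kernel functor reflects normal monomorphisms, whence \(P_i\normal P_1\ojoin P_2\) and then \([P_1,P_2]\oleq P_1\omeet P_2=0\) by Corollary~\ref{inequality of Higgins}. The obstruction to this route is the identity \(\Ker_B(P_1\ojoin P_2)=X_1\ojoin X_2\), i.e.\ \((A_1\ojoin A_2)\omeet X=(A_1\omeet X)\ojoin(A_2\omeet X)\): kernel functors need not preserve joins in a semi-abelian category, so this would demand a separate argument specific to \(\HSLat\). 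For that reason I would carry out the explicit construction of \(\varphi\) above.
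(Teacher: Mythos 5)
Your skeleton is viable: restricting \SSH{} to kernel functors is legitimate, the reduction to subobjects is fine, and your formula for the cooperator is indeed forced on generators and correct --- note that its image is exactly the set \(\widetilde{A}=\{((x_1\wedge x_2)\iimplies \beta b)\wedge(x_1'\wedge x_2')\}\) around which the paper's first proof is built. The genuine gap is that you defer precisely the entire content of the theorem: the verification that \(\varphi\) is a morphism. Your claim that this step ``mirrors'' the implication computation in Lemma~\ref{lemma commute in hslat} and ``reduces to an equation of the same shape as~\eqref{bla}'' is not accurate. Equation~\eqref{bla} involves only elements of the two kernels, whereas expanding \(\varphi\bigl((a_1,a_2)\iimplies(a_1',a_2')\bigr)\) through the canonical decompositions produces mixed terms coupling kernel elements with elements of \(B\), such as \(((x_1\wedge x_2)\iimplies b)\iimplies b'\) and \(((x_1\wedge x_2)\iimplies b)\iimplies (x_1'\wedge x_2')\); taming these requires identities like \((x_1\iimplies b)\wedge(x_2\iimplies b)=b\) (the paper's Lemma~\ref{SSH_relative_comp_disjointish}) and the expansions of Lemmas~\ref{SSH_Lemma_A}--\ref{SSH_Lemma_F}, each of which needs its own nontrivial derivation from the commuting hypothesis. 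Moreover, you single out preservation of ``\(\iimplies\)'' as the only hard point, implicitly treating ``\(\wedge\)'' as harmless; but unlike in Lemma~\ref{lemma commute in hslat}, where \(\varphi(x,y)=x\wedge y\) preserves meets trivially, your \(\varphi\) is defined via the Johnstone decomposition, which does not interact simply with meets --- the corresponding statement in the paper (closure of \(\widetilde{A}\) under meets, Lemma~\ref{SSH_meets}) is itself one of the harder steps, resting on Lemmas~\ref{SSH_Lemma_A}--\ref{SSH_Lemma_C}.

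It is also worth pointing out that the ``more conceptual alternative'' you dismiss is in fact the paper's actual proof. It observes that \(\Pt_B(\HSLat)\) is again arithmetical, applies Proposition~\ref{Prop Commute iff} there, uses strong protomodularity (Theorem~\ref{theorem ToN consequences}) so that the kernel functor reflects normal monomorphisms, and then supplies exactly the ``separate argument specific to \(\HSLat\)'' that you correctly identify as the obstruction --- namely that \(\Ker_B\) preserves this particular join --- by proving that \(\widetilde{A}\) above is a sub-Heyting-semilattice whose kernel is \(\{x_1\wedge x_2\mid x_1\in X_1,\ x_2\in X_2\}=X_1\ojoin X_2\) (Lemma~\ref{SSH_sub-Heyting-semilattice} together with Remark~\ref{Remark Join in HSLat}). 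The computational burden there is essentially the same as what your route defers (closure of the image of \(\varphi\) under the operations, versus morphismhood of \(\varphi\)), but it has the advantage of never having to check that one specific map is a homomorphism. As it stands, your proposal is an accurate outline with the decisive step --- the chain of Heyting-semilattice identities --- missing, and the assertion that this step follows ``directly'' from the commuting hypothesis would fail as written.
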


We have two proofs of this fact that are sufficiently different that we thought it interesting to include both.

\begin{proof}[Proof of Theorem~\ref{HSLat is SSH} via reflection of Huq-commuting pairs]
    We fix a cospan
    \[
        \xymatrix{
            X_1 \ar[r]^{\kappa_1}\ar[d]_{u_1} & A_1 \ar@<0.5ex>[r]^{\alpha_1}\ar[d]_{v_1} & B\ar@<0.5ex>[l]^{\beta_1}\ar@{=}[d]\\
            X \ar[r]^{\kappa} & A \ar@<0.5ex>[r]^{\alpha} & B\ar@<0.5ex>[l]^{\beta}\\
            X_2 \ar[r]_{\kappa_2}\ar[u]^{u_2} & A_2 \ar@<0.5ex>[r]^{\alpha_2}\ar[u]^{v_2} & B\ar@<0.5ex>[l]^{\beta_2}\ar@{=}[u]
        }
    \]
    of sub-split-extensions and suppose that \(u_1\) and \(u_2\) Huq-commute in \(X\). We must show that \(v_1\) and \(v_2\) Huq-commute in \((A,\alpha,\beta)\) as morphisms of points over \(B\). Without any loss of generality, we may suppose that all the morphisms in the previous diagram, excluding \(\alpha_1\), \(\alpha\) and \(\alpha_2\), are inclusions---here we use, for instance, that two morphisms Huq-commute if and only if their images do.

    By~\cite[Example 2.9.7]{Borceux-Bourn}, the category \(\Pt_B(\HSLat)\) is still arithmetical, so we can apply Proposition~\ref{Prop Commute iff} here to prove that \(v_1\) and \(v_2\) Huq-commute in it. Since kernel functors reflect zero objects (by protomodularity) and preserve intersections, \((A_1,\alpha_1,\beta_1)\omeet (A_2,\alpha_2,\beta_2)\) is trivial. Hence it remains to prove that \((A_1,\alpha_1,\beta_1)\) and \((A_2,\alpha_2,\beta_2)\) are normal in their join. Note here that joins in a category of points are computed in the base category. Since the category \(\HSLat\) is strongly protomodular by Theorem~\ref{theorem ToN consequences}, it is sufficient to show that the join of \((A_1,\alpha_1,\beta_1)\) and \((A_2,\alpha_2,\beta_2)\) is preserved by the kernel functor. In order to prove this, we will show that
    \[
        \widetilde{A} = \{((x_1\wedge x_2)\iimplies b) \wedge (x'_1 \wedge x'_2)\in A \mid \text{\(x_1\), \(x'_1 \in X_1\), \(x_2\), \(x'_2 \in X_2\) and \(b \in B\)}\}
    \]
    is a sub-Heyting-semilattice of \(A\), which together with the restrictions \(\widetilde{\alpha}\) and \(\widetilde{\beta}\) of \(\alpha\) and \(\beta\), respectively, forms the join of \(A_1\) and \(A_2\) in the category of points over \(B\).

    Firstly, note that \(\widetilde{A}\) contains \(A_1\) and \(A_2\), as can be seen from Remark~\ref{Remark Split Extension}. If it now forms a sub-Heyting-semilattice of \(X\), then it must be the join of \(A_1=X_1\ojoin B\) and \(A_2=X_2\ojoin B\), since it is generated by \(X_1\ojoin X_2\) and \(B\). Moreover, an element in \(\widetilde{A}\) is mapped to \(1\) under \(\alpha\) if and only if it is of the form \(x_1 \wedge x_2\) for some \(x_1 \in X_1\) and \(x_2 \in X_2\). Therefore, \(X \cap \widetilde{A}  = \Ker(\widetilde \alpha) = \{x_1 \wedge x_2 \mid \text{\(x_1 \in X_1\) and \(x_2 \in X_2\)}\}\) which, since \(u_1\) and \(u_2\) Huq-commute, is the join \(X_1 \ojoin X_2\) as explained in Remark~\ref{Remark Join in HSLat}. Thus, it only remains to be shown that \(\widetilde{A}\) is a sub-Heyting-semilattice. This is Lemma~\ref{SSH_sub-Heyting-semilattice} below.
\end{proof}

We deduce Lemma~\ref{SSH_sub-Heyting-semilattice} from a series of intermediate results.

\begin{lemma}\label{SSH_relative_comp_disjointish}
    For each \(x_1 \in X_1\), \(x_2\in X_2\) and \(b\in B\) we have
    \[
        (x_1\iimplies b) \wedge (x_2 \iimplies b) = b\text{.}
    \]
\end{lemma}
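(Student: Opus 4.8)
The plan is to prove the equivalent inequality $c \leq b$, where $c \coloneq (x_1 \iimplies b) \wedge (x_2 \iimplies b)$, since the reverse inequality $b \leq c$ is automatic from the general fact $y \leq x \iimplies y$ recalled in the introduction. Because $c \leq b$ holds if and only if $c \iimplies b = 1$, everything reduces to computing the single element $c \iimplies b$ and checking that it is the top element.

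First I would rewrite $c \iimplies b$ using only the standard Heyting identities. Writing $p_i \coloneq x_i \iimplies b$, the identity $(u \wedge v) \iimplies w = (u \iimplies v) \iimplies (u \iimplies w)$ gives $c \iimplies b = (p_1 \iimplies p_2) \iimplies (p_1 \iimplies b)$. Setting $d_1 \coloneq p_1 \iimplies b = (x_1 \iimplies b) \iimplies b$ and using $u \iimplies (v \iimplies w) = v \iimplies (u \iimplies w)$, one computes $p_1 \iimplies p_2 = (x_1 \iimplies b) \iimplies (x_2 \iimplies b) = x_2 \iimplies d_1$. Hence the whole expression collapses to $c \iimplies b = (x_2 \iimplies d_1) \iimplies d_1$.

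The key observation, which I expect to be the crux, is that the doubly relativised element $d_1 = (x_1 \iimplies b) \iimplies b$ already lies in $X_1$. Indeed, $d_1$ is built from $x_1 \in X_1 \oleq A_1$ and $b \in B \oleq A_1$, so $d_1 \in A_1$; applying the morphism $\alpha_1$ and using $\alpha_1(x_1) = 1$ together with $\alpha_1(b) = b$ yields $\alpha_1(d_1) = (1 \iimplies b) \iimplies b = b \iimplies b = 1$, so that $d_1 \in \Ker(\alpha_1) = X_1$. Since $X_1$ and $X_2$ Huq-commute in $X$, hence in $A$, Lemma~\ref{lemma commute in hslat} applied to $d_1 \in X_1$ and $x_2 \in X_2$ gives $x_2 \iimplies d_1 = d_1$. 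Substituting back, $c \iimplies b = d_1 \iimplies d_1 = 1$, whence $c \leq b$ and therefore $c = b$.

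The only genuine obstacle is spotting this collapse: that $(x_1 \iimplies b) \iimplies b$ belongs to $X_1$, so that the commutativity of $X_1$ with $X_2$ can be invoked to kill the remaining implication. Once this is recognised, the surrounding manipulations are routine rearrangements via the identities $(u \wedge v) \iimplies w = (u \iimplies v) \iimplies (u \iimplies w)$ and $u \iimplies (v \iimplies w) = v \iimplies (u \iimplies w)$.
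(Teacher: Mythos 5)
Your proof is correct and takes essentially the same route as the paper's: both reduce the claim to showing \(((x_1\iimplies b)\wedge(x_2\iimplies b))\iimplies b=1\), observe that the doubly relativised element \((x_i\iimplies b)\iimplies b\) lies in the kernel \(X_i\), and then invoke Lemma~\ref{lemma commute in hslat} to collapse the remaining implication. The only immaterial differences are mirror-image ones: you place \((x_1\iimplies b)\iimplies b\) in \(X_1\) by computing with \(\alpha_1\), where the paper places \((x_2\iimplies b)\iimplies b\) in \(X_2\) via up-closedness of the filter, and your final rearrangement ends in \(d_1\iimplies d_1=1\) rather than \(x_1\iimplies(b\iimplies((x_2\iimplies b)\iimplies b))=1\).
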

\begin{proof}
    Recalling that \(c \leq d\) if and only if \(c \iimplies d = 1\) and noting that
    \[
        b \leq (x_1 \iimplies b) \wedge (x_2 \iimplies b)
    \]
    we need only show that \(((x_1\iimplies b) \wedge (x_2\iimplies b)) \iimplies b =1\). Since \(x_2 \leqslant (x_2 \iimplies b) \iimplies b\), it follows that \((x_2 \iimplies b) \iimplies b \in X_2\), so that by Lemma~\ref{lemma commute in hslat} we have
    \[
        x_1 \iimplies ((x_2 \iimplies b) \iimplies b) = (x_2 \iimplies b) \iimplies b\text{.}
    \]
    Since \(a\mapsto (x_1 \iimplies a)\) is a morphism, we obtain
    \begin{align*}
        ((x_1\iimplies b) \wedge (x_2\iimplies b)) \iimplies  b & =(x_1\iimplies b) \iimplies ((x_2\iimplies b) \iimplies  b)                 \\
                                                                & =(x_1\iimplies b) \iimplies (x_1 \iimplies ((x_2\iimplies b) \iimplies  b)) \\
                                                                & = x_1 \iimplies (b \iimplies ((x_2\iimplies b) \iimplies b))                \\
                                                                & = 1
    \end{align*}
    as desired.
\end{proof}

\begin{lemma}\label{SSH_Lemma_A}
    For each \(x_1\in X_1\), \(x_2\in X_2\), and \(b\), \(b'\in B\) we have
    \[
        ((x_1\wedge x_2)\iimplies b) \iimplies b' = (x_2 \iimplies ((x_1\iimplies b)\iimplies b')) \wedge (x_1 \iimplies ((x_2\iimplies b)\iimplies b')).
    \]
    If, furthermore, \(b \leq b'\), then
    \begin{equation}\label{SSH_nice_equation}
        ((x_1\wedge x_2)\iimplies b) \iimplies b' = ((x_1\iimplies b)\iimplies b') \wedge ((x_2\iimplies b)\iimplies b')
    \end{equation}
    and hence
    \[
        ((x_1\wedge x_2)\iimplies b) \iimplies b' \in X_1 \ojoin X_2\text{.}
    \]
\end{lemma}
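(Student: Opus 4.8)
The plan is to prove the unconditional identity first and then read off the case $b \leq b'$ as a corollary. Throughout I would abbreviate $p = x_1 \iimplies b$, $q = x_2 \iimplies b$ and $m = (x_1 \wedge x_2) \iimplies b$, recording at once the two factorisations $m = x_1 \iimplies q = x_2 \iimplies p$ that come from $(u \wedge v) \iimplies w = u \iimplies (v \iimplies w)$. With this notation the first claim reads $m \iimplies b' = (x_2 \iimplies (p \iimplies b')) \wedge (x_1 \iimplies (q \iimplies b'))$; write $R$ for the right-hand side. I would establish it by the two inequalities, using repeatedly the adjunction $u \wedge v \leq w \iff u \leq v \iimplies w$, the absorption rule $v \wedge (v \iimplies w) = v \wedge w$, and the fact that $x_i \iimplies (-)$ preserves meets. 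For the easy direction $m \iimplies b' \leq R$ it is enough to check $m \iimplies b' \leq x_2 \iimplies (p \iimplies b')$ and its symmetric counterpart; by the adjunction this reduces to $(m \iimplies b') \wedge x_2 \wedge p \leq b'$, which is immediate since $x_2 \wedge p = x_2 \wedge (x_2 \iimplies p) = x_2 \wedge m$ (absorption, using $m = x_2 \iimplies p$) and $(m \iimplies b') \wedge m = m \wedge b' \leq b'$.

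The reverse inequality $R \leq m \iimplies b'$ is the main obstacle, and is exactly where Lemma~\ref{SSH_relative_comp_disjointish} is needed. Here I would exploit the double factorisation of $m$ by writing $m = (x_1 \iimplies q) \wedge (x_2 \iimplies p)$, so that
\[
    R \wedge m = \bigl((x_2 \iimplies (p \iimplies b')) \wedge (x_2 \iimplies p)\bigr) \wedge \bigl((x_1 \iimplies (q \iimplies b')) \wedge (x_1 \iimplies q)\bigr).
\]
Grouping the two $x_2$-factors and the two $x_1$-factors and applying meet-preservation together with absorption collapses the right-hand side to $(x_2 \iimplies (p \wedge b')) \wedge (x_1 \iimplies (q \wedge b'))$, which is $\leq (x_2 \iimplies b') \wedge (x_1 \iimplies b') = b'$ by Lemma~\ref{SSH_relative_comp_disjointish} applied to $b'$ in place of $b$. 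The adjunction then yields $R \leq m \iimplies b'$, completing the first identity.

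For the special case, suppose $b \leq b'$. The key observation is that $p \iimplies b' = (x_1 \iimplies b) \iimplies b'$ lies in $X_1$ and $q \iimplies b' = (x_2 \iimplies b) \iimplies b'$ lies in $X_2$: each such element belongs to $A_1$, respectively $A_2$ (a sub-Heyting-semilattice containing the relevant $X_i$ and $B$), and $\alpha_1$ sends $(x_1 \iimplies b) \iimplies b'$ to $(1 \iimplies b) \iimplies b' = b \iimplies b' = 1$, so it lies in $\Ker(\alpha_1) = X_1$, and symmetrically for the other. Since $X_1$ and $X_2$ Huq-commute, Lemma~\ref{lemma commute in hslat} now lets me strip the outer implications, giving $x_2 \iimplies (p \iimplies b') = p \iimplies b'$ and $x_1 \iimplies (q \iimplies b') = q \iimplies b'$. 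Substituting these into the identity already proved turns $R$ into $(p \iimplies b') \wedge (q \iimplies b')$, which is precisely \eqref{SSH_nice_equation}. Having thereby written $m \iimplies b'$ as the meet of an element of $X_1$ with an element of $X_2$, the membership $((x_1 \wedge x_2) \iimplies b) \iimplies b' \in X_1 \ojoin X_2$ follows from the description of the join of commuting subobjects in Remark~\ref{Remark Join in HSLat}.
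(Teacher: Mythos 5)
Your proof is correct, and it arrives at the statement via the same two key inputs as the paper --- Lemma~\ref{SSH_relative_comp_disjointish} applied with \(b'\) in place of \(b\), and (for the case \(b\leq b'\)) the observation that \(\alpha_1\) sends \((x_1\iimplies b)\iimplies b'\) to \(b\iimplies b'=1\), so that Lemma~\ref{lemma commute in hslat} strips the outer implications --- but your derivation of the first identity differs in technique. The paper proves it as a three-line equational rewrite: substitute \(b'=(x_2\iimplies b')\wedge(x_1\iimplies b')\), distribute ``\(\iimplies\)'' over the meet using the two factorisations of \((x_1\wedge x_2)\iimplies b\), and finish with the identity \((x\iimplies y)\iimplies(x\iimplies z)=x\iimplies(y\iimplies z)\). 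You instead split the identity into two inequalities proved via the adjunction and the absorption rule \(v\wedge(v\iimplies w)=v\wedge w\); this is longer, but it isolates a point the paper's computation leaves implicit: the inequality \(((x_1\wedge x_2)\iimplies b)\iimplies b'\leq (x_2\iimplies((x_1\iimplies b)\iimplies b'))\wedge(x_1\iimplies((x_2\iimplies b)\iimplies b'))\) holds in \emph{any} Heyting semilattice with no hypothesis on \(X_1\), \(X_2\), \(B\), the lemma being needed only for the reverse inequality. Your treatment of the case \(b\leq b'\) coincides with the paper's, and the concluding membership is immediate whether one cites Remark~\ref{Remark Join in HSLat} as you do, or simply notes that \(X_1\ojoin X_2\) is a subalgebra containing both factors of the meet.
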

\begin{proof}
    By Lemma~\ref{SSH_relative_comp_disjointish} we have that
    \begin{align*}
         & ((x_1\wedge x_2)\iimplies b) \iimplies b'                                                                                                \\ & = ((x_1\wedge x_2)\iimplies b) \iimplies ((x_2 \iimplies b') \wedge (x_1 \iimplies b'))                                                  \\
         & =  ((x_2 \iimplies (x_1 \iimplies b)) \iimplies (x_2 \iimplies b')) \wedge ((x_1\iimplies(x_2\iimplies b)) \iimplies (x_1 \iimplies b')) \\
         & = (x_2 \iimplies ((x_1\iimplies b)\iimplies b')) \wedge (x_1 \iimplies ((x_2\iimplies b)\iimplies b')).
    \end{align*}
    The proof is completed by noting that if \(b\leq b'\), then since \(\alpha_1\)  applied to \({(x_1\iimplies b)\iimplies b'}\) is \(1\), it follows that \((x_1\iimplies b)\iimplies b'\) is in \(X_1\). Similarly,  \((x_2\iimplies b)\iimplies b'\) is in \(X_2\) and we obtain \eqref{SSH_nice_equation} by Lemma~\ref{lemma commute in hslat} applied to \(X_1\) and \(X_2\).
\end{proof}

\begin{lemma}\label{SSH_Lemma_B}
    For each \(x_1, x'_1 \in X_1\), \(x_2, x'_2 \in X_2\), and \(b, b' \in B\), if \(b \leq b'\), then
    \[
        ((x_1\wedge x_2)\iimplies b) \iimplies ((x'_1\wedge x'_2)\iimplies b') \in X_1\ojoin X_2.
    \]
\end{lemma}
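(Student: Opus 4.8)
The plan is to collapse the nested implications by currying, so that the term produced by Lemma~\ref{SSH_Lemma_A} appears on its own, and then to finish using only the fact that \(X_1 \ojoin X_2\) is a sub-Heyting-semilattice.

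First I would apply the identity \((u \wedge v)\iimplies w = u \iimplies (v \iimplies w)\) (together with commutativity of \(\wedge\)) to pivot the outer implication, rewriting
\[
    ((x_1\wedge x_2)\iimplies b) \iimplies ((x'_1\wedge x'_2)\iimplies b') = (x'_1 \wedge x'_2) \iimplies \bigl(((x_1 \wedge x_2)\iimplies b) \iimplies b'\bigr).
\]
The point of this move is that the inner expression is now exactly of the shape treated in Lemma~\ref{SSH_Lemma_A}; since \(b \leq b'\) by hypothesis, that lemma gives \(((x_1 \wedge x_2)\iimplies b) \iimplies b' \in X_1 \ojoin X_2\).

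It then remains to note that \(X_1 \ojoin X_2\), being the join of two subobjects in \(\HSLat\), is a sub-Heyting-semilattice of \(A\) and hence closed under both \(\wedge\) and \(\iimplies\). As \(x'_1 \in X_1\) and \(x'_2 \in X_2\), the element \(x'_1 \wedge x'_2\) lies in \(X_1 \ojoin X_2\); applying \(\iimplies\) to two elements of \(X_1 \ojoin X_2\) keeps us inside \(X_1 \ojoin X_2\), and the displayed equality then yields the claim.

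The step I expect to require the most care is the currying rearrangement: one must check that the implication can indeed be pivoted so that the bare \(b'\) (rather than \((x'_1 \wedge x'_2)\iimplies b'\)) ends up on the right of the outer implication, since it is precisely this form, under the hypothesis \(b \leq b'\), that makes Lemma~\ref{SSH_Lemma_A} applicable. Once this reduction is secured, the Huq-commutativity of \(X_1\) and \(X_2\) has already been fully absorbed into Lemma~\ref{SSH_Lemma_A}, so no further commutator computation is needed.
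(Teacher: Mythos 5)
Your proof is correct and is essentially identical to the paper's: the paper performs the same pivot via the exchange law \(x \iimplies (y \iimplies z) = y \iimplies (x \iimplies z)\) (which is exactly what your currying identity plus commutativity of \(\wedge\) yields), then invokes Lemma~\ref{SSH_Lemma_A} and closure of \(X_1\ojoin X_2\) under ``\(\iimplies\)''. No gaps.
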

\begin{proof}
    In any Heyting semilattice we have that \(x \iimplies (y \iimplies z) = y \iimplies (x \iimplies z)\), so
    \begin{align*}
        ((x_1\wedge x_2)\iimplies b) \iimplies ((x'_1\wedge x'_2)\iimplies b') & = (x'_1\wedge x'_2) \iimplies (((x_1\wedge x_2)\iimplies b)\iimplies b').
    \end{align*}
    The claim follows by Lemma~\ref{SSH_Lemma_A} and the fact that \(X_1\ojoin X_2\) is closed under the operation ``\(\iimplies\)''.
\end{proof}

\begin{lemma}\label{SSH_Lemma_C}
    For each \(x_1, x'_1 \in X_1\), \(x_2, x'_2 \in X_2\), and \(b\in B\),
    \[
        ((x_1\wedge x_2)\iimplies b) \iimplies (x'_1\wedge x'_2) \in X_1\ojoin X_2\text{.}
    \]
\end{lemma}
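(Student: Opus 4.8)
The plan is to reduce the statement to showing that the two ``halves'' $((x_1\wedge x_2)\iimplies b)\iimplies x'_1$ and $((x_1\wedge x_2)\iimplies b)\iimplies x'_2$ lie in $X_1$ and $X_2$ respectively; their meet then lies in $X_1\ojoin X_2$ by the explicit description of the join for commuting subobjects recorded in Remark~\ref{Remark Join in HSLat}. Setting $c\coloneq(x_1\wedge x_2)\iimplies b$, this reduction is immediate, since the distributivity of ``$\iimplies$'' over ``$\wedge$'' gives $c\iimplies(x'_1\wedge x'_2)=(c\iimplies x'_1)\wedge(c\iimplies x'_2)$.

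First I would handle $c\iimplies x'_1$. The idea is to use that $X_1$ and $X_2$ Huq-commute in order to rewrite $x'_1=x_2\iimplies x'_1$ (Lemma~\ref{lemma commute in hslat}), so that $c\iimplies x'_1=c\iimplies(x_2\iimplies x'_1)=(c\wedge x_2)\iimplies x'_1$. The purpose of manufacturing the extra factor $x_2$ is that it activates the absorption axiom: writing $c=x_2\iimplies(x_1\iimplies b)$, one finds $c\wedge x_2=x_2\wedge(x_1\iimplies b)$, and therefore $c\iimplies x'_1=x_2\iimplies\bigl((x_1\iimplies b)\iimplies x'_1\bigr)$.

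The key step is then to identify $w_1\coloneq(x_1\iimplies b)\iimplies x'_1$ as an element of $X_1$. On the one hand, applying $\alpha$ sends $w_1$ to $(1\iimplies b)\iimplies 1=1$, so that $w_1\in X=\Ker(\alpha)$; on the other hand, since $x_1$, $x'_1\in X_1$ and $b\in B$, the element $w_1$ lies in the sub-Heyting-semilattice $A_1=X_1\ojoin B$. As $X_1=A_1\omeet X=\Ker(\alpha_1)$, I conclude $w_1\in X_1$. A final use of commutativity yields $x_2\iimplies w_1=w_1$, so $c\iimplies x'_1=w_1\in X_1$. By the symmetric argument, interchanging the indices $1$ and $2$, one gets $c\iimplies x'_2=(x_2\iimplies b)\iimplies x'_2\in X_2$, and hence $c\iimplies(x'_1\wedge x'_2)=w_1\wedge w_2\in X_1\ojoin X_2$.

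I expect the main obstacle to be the very first manipulation: noticing that one should replace $x'_1$ by $x_2\iimplies x'_1$ precisely so as to create a factor $x_2$ that the absorption axiom $x_2\wedge(x_2\iimplies -)=x_2\wedge -$ can then collapse. Without this trick one is stuck with an iterated implication $((x_1\wedge x_2)\iimplies b)\iimplies x'_1$ for which no general Heyting simplification is available. Once the trick is in place, everything else is routine bookkeeping with the standard identities and the membership argument $A_1\omeet X=X_1$.
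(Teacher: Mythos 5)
Your proof is correct and follows essentially the same route as the paper's: distribute the implication over the meet, use Lemma~\ref{lemma commute in hslat} to insert the commuting variable \(x_2\) (resp.\ \(x_1\)), reduce \(((x_1\wedge x_2)\iimplies b)\iimplies x'_i\) to \((x_i\iimplies b)\iimplies x'_i\), and strip the prefix off again by commutativity. The only cosmetic differences are that the paper carries out the middle rewriting with the identity \((x\iimplies y)\iimplies (x\iimplies z)=x\iimplies (y\iimplies z)\) instead of your absorption step, and certifies \((x_i\iimplies b)\iimplies x'_i\in X_i\) by up-closedness (it is \(\geq x'_i\) and \(X_i\) is a filter in \(A_i\)) rather than by your equally valid kernel computation \(X_i=A_i\omeet X=\Ker(\alpha_i)\).
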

\begin{proof}
    Note that
    \[
        ((x_1\wedge x_2)\iimplies b) \iimplies (x'_1\wedge x'_2) = (((x_1\wedge x_2)\iimplies b) \iimplies x'_1 )\wedge (((x_1\wedge x_2)\iimplies b) \iimplies x'_2)\text{.}
    \]
    Since \(X_1\) and \(X_2\) Huq-commute in \(X\), Lemma~\ref{lemma commute in hslat} allows us to show that
    \begin{align*}
        ((x_1\wedge x_2)\iimplies b) \iimplies x'_2 & =  (x_1\iimplies (x_2\iimplies b)) \iimplies (x_1\iimplies x'_2) \\
                                                    & =  x_1\iimplies ((x_2\iimplies b) \iimplies x'_2)                \\
                                                    & =  (x_2\iimplies b) \iimplies x'_2\text{.}
    \end{align*}
    Note that \((x_2\iimplies b) \iimplies x'_2\) is an element of \(X_2\), since it is greater than \(x'_2\). This proves the last equality above, and at the same time tells us that \(((x_1\wedge x_2)\iimplies b) \iimplies x'_2\) belongs to \(X_2\). Likewise, \(((x_1\wedge x_2)\iimplies b) \iimplies x'_1 \) is in \(X_1\).
\end{proof}

\begin{lemma}\label{SSH_meets}
    \(\widetilde{A}\) is closed under meets.
\end{lemma}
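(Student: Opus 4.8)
The plan is to verify closure of \(\widetilde A\) under \(\wedge\) by reducing to the essential case and then using the defining identity \(x\wedge(x\iimplies y)=x\wedge y\) to split off the part living in \(X_1\ojoin X_2\). Write a typical pair of elements of \(\widetilde A\) as \(g=((x_1\wedge x_2)\iimplies b)\wedge(x_1'\wedge x_2')\) and \(h=((y_1\wedge y_2)\iimplies c)\wedge(y_1'\wedge y_2')\). Since \(X_1\ojoin X_2\) is a sub-Heyting-semilattice—equal, by Remark~\ref{Remark Join in HSLat}, to \(\{x_1\wedge x_2\mid x_1\in X_1,\ x_2\in X_2\}\) because \(X_1\) and \(X_2\) Huq-commute in \(X\)—the factor \(x_1'\wedge x_2'\wedge y_1'\wedge y_2'\) lies in \(X_1\ojoin X_2\). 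So everything reduces to showing that \(m\coloneq((x_1\wedge x_2)\iimplies b)\wedge((y_1\wedge y_2)\iimplies c)\) belongs to \(\widetilde A\): the remaining \(X_1\ojoin X_2\)-factor is then simply absorbed into the second component.

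First I would set \(P_1=x_1\wedge x_2\), \(P_2=y_1\wedge y_2\) (both in \(X_1\ojoin X_2\)), \(d=b\wedge c\in B\), and \(M=(P_1\wedge P_2)\iimplies d\), which already has the shape required to lie in \(\widetilde A\). A short computation applying \(x\wedge(x\iimplies y)=x\wedge y\) twice gives \((P_1\wedge P_2)\wedge m=(P_1\wedge P_2)\wedge d\), whence by adjunction \(m\leq M\). The key move is then \(m=M\wedge(M\iimplies m)\), again by the identity \(x\wedge(x\iimplies y)=x\wedge y\); this is precisely the trick that isolates an \(X_1\ojoin X_2\)-valued second factor.

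It then remains to show \(M\iimplies m\in X_1\ojoin X_2\). Distributing \(M\iimplies(-)\) over the meet defining \(m\), via \(x\iimplies(y\wedge z)=(x\iimplies y)\wedge(x\iimplies z)\), yields \(M\iimplies m=(M\iimplies(P_1\iimplies b))\wedge(M\iimplies(P_2\iimplies c))\). Each factor has the form \(((P_1\wedge P_2)\iimplies d)\iimplies((x_1\wedge x_2)\iimplies b)\), respectively with \(c\), where \(P_1\wedge P_2=(x_1\wedge y_1)\wedge(x_2\wedge y_2)\in X_1\ojoin X_2\) and where \(d\leq b\) and \(d\leq c\); these are exactly the expressions shown to lie in \(X_1\ojoin X_2\) by Lemma~\ref{SSH_Lemma_B}. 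As \(X_1\ojoin X_2\) is closed under meets, \(M\iimplies m\in X_1\ojoin X_2\), and therefore \(m=M\wedge(M\iimplies m)\) is of the required form \(((P_1\wedge P_2)\iimplies d)\wedge z\) with \(z\in X_1\ojoin X_2\), so \(m\in\widetilde A\).

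I expect the main obstacle to be the case where \(b\) and \(c\) are incomparable. The naive attempt \(m=u\wedge(u\iimplies v)\) with \(u=P_1\iimplies b\), \(v=P_2\iimplies c\) fails here, because \(u\iimplies v\) (and symmetrically \(v\iimplies u\)) has \(\alpha\)-image \(b\iimplies c\neq1\) and so need not lie in the kernel, let alone in \(X_1\ojoin X_2\); Lemma~\ref{SSH_Lemma_B} does not apply. Passing to \(d=b\wedge c\) and replacing \(u\) by \(M=(P_1\wedge P_2)\iimplies d\) is what repairs this, since then both \(d\leq b\) and \(d\leq c\) hold and Lemma~\ref{SSH_Lemma_B} applies to each factor of \(M\iimplies m\).
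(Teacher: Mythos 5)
Your proof is correct, and it takes a genuinely different route to the same goal. Both arguments share the outer skeleton: reduce closure under meets to showing that the meet \(m\) of two implication parts \(((x_1\wedge x_2)\iimplies b)\wedge((y_1\wedge y_2)\iimplies c)\) lies in \(\widetilde{A}\), then exhibit \(m\) as (implication part) \(\wedge\) (element of \(X_1\ojoin X_2\)). But the decompositions differ. The paper splits \(m\) canonically along the Johnstone terms \eqref{lemma protomodular terms for HSL}, writing \(m=((m\iimplies\alpha(m))\iimplies\alpha(m))\wedge\bigl(((m\iimplies\alpha(m))\iimplies\alpha(m))\iimplies m\bigr)\), and must then verify that \(m\iimplies\alpha(m)\in X_1\ojoin X_2\) (via Lemma~\ref{SSH_Lemma_A} followed by Lemma~\ref{SSH_Lemma_C}) and that the second factor lies there too (via Lemma~\ref{SSH_Lemma_B} twice). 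You instead guess the explicit majorant \(M=(P_1\wedge P_2)\iimplies(b\wedge c)\), establish \(m\leq M\) by residuation from \((P_1\wedge P_2)\wedge m=(P_1\wedge P_2)\wedge(b\wedge c)\), and split \(m=M\wedge(M\iimplies m)\); distributing \(M\iimplies(-)\) over the meet reduces everything to two instances of Lemma~\ref{SSH_Lemma_B}, both legitimate since \(b\wedge c\leq b\), \(b\wedge c\leq c\), and \(P_1\wedge P_2=(x_1\wedge y_1)\wedge(x_2\wedge y_2)\) has the shape \(X_1\)-element \(\wedge\) \(X_2\)-element, together with closure of \(X_1\ojoin X_2\) under meets. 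Every step checks out, including the final absorption of the residual \(X_1\ojoin X_2\)-factor using Remark~\ref{Remark Join in HSLat} (which is available because \(X_1\) and \(X_2\) Huq-commute by hypothesis), and there is no circularity since Lemma~\ref{SSH_Lemma_B} precedes this one. What each approach buys: yours is leaner, citing only Lemma~\ref{SSH_Lemma_B} and bypassing Lemmas~\ref{SSH_Lemma_A} and~\ref{SSH_Lemma_C} entirely at this point of the argument; the paper's is more systematic, in that the Johnstone-term decomposition is canonical, applies uniformly to any element of a split extension (cf.\ Remark~\ref{Remark Split Extension}), and requires no bespoke choice of majorant---at the cost of a heavier explicit computation.
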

\begin{proof}
    It is sufficient to prove that \(a\coloneq ((x_1\wedge x_2)\iimplies b) \wedge ((x'_1\wedge x'_2)\iimplies b')\) is in \(\widetilde{A}\) where
    \(x_1\), \(x'_1\) in \(X_1\), \(x_2\), \(x'_2\) in \(X_2\), and \(b\), \(b'\) in \(B\). Since the Johnstone terms \eqref{lemma protomodular terms for HSL} allow us to write
    \[
        a = ((a \iimplies \alpha(a))\iimplies \alpha(a))\wedge(((a \iimplies \alpha(a))\iimplies \alpha(a))\iimplies a)\text{,}
    \]
    we only need to show that both \(a \iimplies \alpha(a)\) and \(((a \iimplies \alpha(a))\iimplies \alpha(a))\iimplies a\) belong to \(X_1\ojoin X_2\). We have
    \begin{align*}
        a\iimplies \alpha(a) & = (((x_1\wedge x_2)\iimplies b) \wedge ((x'_1\wedge x'_2)\iimplies b'))\iimplies (b\wedge b')       \\
                             & = ((((x_1\wedge x_2)\iimplies b) \wedge ((x'_1\wedge x'_2)\iimplies b'))\iimplies b)                \\
                             & \qquad \wedge ((((x_1\wedge x_2)\iimplies b) \wedge ((x'_1\wedge x'_2)\iimplies b'))\iimplies b')   \\
                             & = (((x'_1\wedge x'_2)\iimplies b')\iimplies (((x_1\wedge x_2)\iimplies b)\iimplies b))              \\
                             & \qquad \wedge (((x_1\wedge x_2)\iimplies b)\iimplies (((x'_1\wedge x'_2)\iimplies b')\iimplies b'))
    \end{align*}
    which is in \(X_1\ojoin X_2\) by Lemma~\ref{SSH_Lemma_A} followed by Lemma~\ref{SSH_Lemma_C}. On the other hand,
    \begin{align*}
        ((a \iimplies \alpha(a)) \iimplies \alpha(a)) & \iimplies a                                                                                  \\
                                                      & = ((a \iimplies \alpha(a)) \iimplies (b \wedge b'))                                          \\
                                                      & \quad \iimplies (((x_1 \wedge x_2) \iimplies b) \,\wedge\, (x'_1 \wedge x'_2) \iimplies b')) \\
                                                      & = (((a \iimplies \alpha(a)) \iimplies (b \wedge b'))
        \iimplies ((x_1 \wedge x_2) \iimplies b))                                                                                                    \\
                                                      & \quad \wedge (((a \iimplies \alpha(a)) \iimplies (b \wedge b'))
        \iimplies ((x'_1 \wedge x'_2) \iimplies b')).
    \end{align*}
    This expression belongs to \(X_1 \ojoin X_2\), since \( a\iimplies \alpha(a) \in X_1 \ojoin X_2 \), by two applications of Lemma~\ref{SSH_Lemma_B}, because \(X_1 \ojoin X_2\) is closed under meets.
\end{proof}

Trivially, we have:
\begin{lemma}\label{SSH_Lemma_D}
    For each \(x\in X_1\ojoin X_2\) and \(a \in \widetilde{A}\) we have that \(x\iimplies a\) is in \(\widetilde{A}\).\noproof
\end{lemma}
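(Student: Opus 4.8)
The plan is to push the implication $x \iimplies (-)$ through the defining shape of an element of $\widetilde{A}$, using nothing more than the fact that $y \mapsto (x \iimplies y)$ is a morphism (so it preserves meets), together with the standard identity $(p \wedge q) \iimplies r = p \iimplies (q \iimplies r)$ and the commutativity of $X_1$ and $X_2$ recorded in Lemma~\ref{lemma commute in hslat}.

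First I would fix $x \in X_1 \ojoin X_2$ and $a \in \widetilde{A}$. By Remark~\ref{Remark Join in HSLat} I may write $x = w_1 \wedge w_2$ with $w_1 \in X_1$ and $w_2 \in X_2$, and by definition of $\widetilde{A}$ I have $a = ((x_1 \wedge x_2) \iimplies b) \wedge (x'_1 \wedge x'_2)$ for suitable $x_1, x'_1 \in X_1$, $x_2, x'_2 \in X_2$ and $b \in B$. Since $x \iimplies (-)$ preserves meets,
\[
    x \iimplies a = \bigl(x \iimplies ((x_1 \wedge x_2) \iimplies b)\bigr) \wedge \bigl(x \iimplies (x'_1 \wedge x'_2)\bigr),
\]
so it suffices to analyse the two factors separately.

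For the first factor I would use $(p \wedge q) \iimplies r = p \iimplies (q \iimplies r)$ to rewrite $x \iimplies ((x_1 \wedge x_2) \iimplies b)$ as $(x \wedge x_1 \wedge x_2) \iimplies b = ((w_1 \wedge x_1) \wedge (w_2 \wedge x_2)) \iimplies b$; as $X_1$ and $X_2$ are closed under meets, this is again of the form $((y_1 \wedge y_2) \iimplies b)$ with $y_1 \in X_1$ and $y_2 \in X_2$. For the second factor I would distribute once more, $x \iimplies (x'_1 \wedge x'_2) = (x \iimplies x'_1) \wedge (x \iimplies x'_2)$, and then collapse the mixed implications via Lemma~\ref{lemma commute in hslat}: one has $x \iimplies x'_1 = w_1 \iimplies (w_2 \iimplies x'_1) = w_1 \iimplies x'_1 \in X_1$ because $w_2 \iimplies x'_1 = x'_1$, and symmetrically $x \iimplies x'_2 = w_2 \iimplies x'_2 \in X_2$. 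Hence the second factor is a meet of an element of $X_1$ with an element of $X_2$, so it belongs to $X_1 \ojoin X_2$. Assembling the two factors exhibits $x \iimplies a$ in exactly the defining form of $\widetilde{A}$, which closes the argument. There is no genuine obstacle here---the only point requiring attention is the consistent use of the commutativity of $X_1$ and $X_2$ when simplifying the cross-terms $w_2 \iimplies x'_1$ and $w_1 \iimplies x'_2$---which is precisely why the statement is labelled \emph{trivial}.
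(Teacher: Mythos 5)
Your proof is correct and is precisely the routine computation the paper omits (the lemma carries no proof there, being labelled trivial): distribute \(x\iimplies(-)\) over the meet, fold the first factor into \(((x\wedge x_1\wedge x_2)\iimplies b)\), and put the second factor back into the form \(y'_1\wedge y'_2\). The only remark worth making is that your explicit collapsing of the cross-terms via Lemma~\ref{lemma commute in hslat} can be shortcut: since \(X_1\ojoin X_2\) is a sub-Heyting-semilattice containing both \(x\) and \(x'_1\wedge x'_2\), the element \(x\iimplies(x'_1\wedge x'_2)\) lies in \(X_1\ojoin X_2\) and hence has the form \(y'_1\wedge y'_2\) directly by Remark~\ref{Remark Join in HSLat}.
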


\begin{lemma}\label{SSH_Lemma_E}
    For each \(x_1\in X_1\), \(x_2\in X_2\), and \(b,b'\in B\) we have
    \[
        ((x_1\wedge x_2)\iimplies b) \iimplies b' \in \widetilde{A}\text{.}
    \]
\end{lemma}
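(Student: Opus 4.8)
The plan is to reduce \(((x_1\wedge x_2)\iimplies b) \iimplies b'\) to a meet of two ``one-sided'' expressions, each of which is obtained by applying ``\(\iimplies\)'' to an element of \(\widetilde A\) along an element of \(X_1\ojoin X_2\), and then to invoke the closure properties already established in Lemmas~\ref{SSH_Lemma_D} and~\ref{SSH_meets}. Crucially, I would use the \emph{unrestricted} identity of Lemma~\ref{SSH_Lemma_A} (valid for arbitrary \(b\), \(b'\in B\); the hypothesis \(b\leq b'\) is needed only for the sharper conclusion~\eqref{SSH_nice_equation}), rather than trying to reduce to the case \(b\leq b'\).

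Concretely, I would first apply Lemma~\ref{SSH_Lemma_A} to write
\[
    ((x_1\wedge x_2)\iimplies b) \iimplies b' = (x_2 \iimplies P_1) \wedge (x_1 \iimplies P_2),
\]
where \(P_1\coloneq(x_1\iimplies b)\iimplies b'\) and \(P_2\coloneq(x_2\iimplies b)\iimplies b'\). The key observation is then that \(P_1\) already lies in \(A_1\) and \(P_2\) in \(A_2\): indeed \(A_1\) is a sub-Heyting-semilattice of \(A\) containing \(X_1\) and \(B\), hence closed under ``\(\iimplies\)'', and \(x_1\in X_1\), \(b\), \(b'\in B\); symmetrically for \(P_2\). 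Since \(\widetilde A\) contains both \(A_1\) and \(A_2\) (as recorded at the start of the proof of Theorem~\ref{HSLat is SSH}, using Remark~\ref{Remark Split Extension} with trivial \(X_2\)- resp.\ \(X_1\)-components), this gives \(P_1\), \(P_2\in\widetilde A\) for free.

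Finally, since \(x_2\in X_2\oleq X_1\ojoin X_2\) and \(x_1\in X_1\oleq X_1\ojoin X_2\), Lemma~\ref{SSH_Lemma_D} yields \(x_2\iimplies P_1\in\widetilde A\) and \(x_1\iimplies P_2\in\widetilde A\); closure of \(\widetilde A\) under meets (Lemma~\ref{SSH_meets}) then gives \(((x_1\wedge x_2)\iimplies b)\iimplies b'\in\widetilde A\), as required. The main conceptual obstacle here is not computational but one of recognition: one must see that the two factors produced by Lemma~\ref{SSH_Lemma_A} are ``one-sided'' (involving only \(X_1\) or only \(X_2\) together with \(B\)) and therefore land in \(A_1\), \(A_2\oleq\widetilde A\) without any further work, so that all the genuine Heyting-algebra manipulation has already been absorbed into the earlier lemmas.
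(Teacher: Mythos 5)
Your proposal is correct and follows essentially the same route as the paper's own proof: the same application of the unrestricted identity in Lemma~\ref{SSH_Lemma_A}, the same observation that \((x_i\iimplies b)\iimplies b'\in A_i\oleq\widetilde{A}\), and the same conclusion via Lemma~\ref{SSH_Lemma_D} and Lemma~\ref{SSH_meets}. The extra justification you give for \(P_1\in A_1\) and \(P_2\in A_2\) (closure of the \(A_i\) under ``\(\iimplies\)'') merely spells out what the paper leaves implicit.
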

\begin{proof}
    By Lemma~\ref{SSH_Lemma_A} we have
    \[
        ((x_1\wedge x_2)\iimplies b) \iimplies b' = (x_2 \iimplies ((x_1\iimplies b)\iimplies b')) \wedge (x_1 \iimplies ((x_2\iimplies b)\iimplies b')).
    \]
    However, by Lemma~\ref{SSH_Lemma_D}, both \((x_2 \iimplies ((x_1\iimplies b)\iimplies b'))\) and \({(x_1 \iimplies ((x_2\iimplies b)\iimplies b'))}\) are in \(\widetilde{A}\), because \((x_i\iimplies b)\iimplies b'\in A_i\oleq \widetilde{A}\). The claim now follows from Lemma~\ref{SSH_meets}.
\end{proof}
\begin{lemma}\label{SSH_Lemma_F}
    For each \(x_1, x'_1\in X_1\), \(x_2, x'_2\in X_2\), and \(b, b'\in B\) we have
    \[
        ((x_1\wedge x_2)\iimplies b) \iimplies ((x'_1\wedge x'_2)\iimplies b') \in \widetilde{A}\text{.}
    \]
\end{lemma}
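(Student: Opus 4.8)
The plan is to reduce this statement to Lemma~\ref{SSH_Lemma_E} by means of the standard commutation rule for implication. First I would apply the identity \(u \iimplies (v \iimplies z) = v \iimplies (u \iimplies z)\), valid in any Heyting semilattice (and already used in the proof of Lemma~\ref{SSH_Lemma_B}), with \(u = (x_1\wedge x_2)\iimplies b\), \(v = x'_1\wedge x'_2\) and \(z = b'\). This rewrites the target expression as
\[
    ((x_1\wedge x_2)\iimplies b) \iimplies ((x'_1\wedge x'_2)\iimplies b') = (x'_1\wedge x'_2) \iimplies (((x_1\wedge x_2)\iimplies b)\iimplies b').
\]

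Next I would invoke Lemma~\ref{SSH_Lemma_E}, which guarantees that the inner term \(((x_1\wedge x_2)\iimplies b)\iimplies b'\) belongs to \(\widetilde{A}\) \emph{without any assumption relating \(b\) and \(b'\)}. This is precisely the feature that makes the argument go through in full generality: the analogous Lemma~\ref{SSH_Lemma_B} required \(b\leq b'\), and it is Lemma~\ref{SSH_Lemma_E} that removes this restriction. Since \(x'_1\in X_1\) and \(x'_2\in X_2\), the meet \(x'_1\wedge x'_2\) is an element of the join \(X_1\ojoin X_2\); this is its canonical form as described in Remark~\ref{Remark Join in HSLat}, or simply follows from the fact that \(X_1\ojoin X_2\) contains both \(X_1\) and \(X_2\) and is closed under meets.

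Finally I would apply Lemma~\ref{SSH_Lemma_D} with \(x = x'_1\wedge x'_2 \in X_1\ojoin X_2\) and \(a = ((x_1\wedge x_2)\iimplies b)\iimplies b' \in \widetilde{A}\), concluding that \((x'_1\wedge x'_2) \iimplies (((x_1\wedge x_2)\iimplies b)\iimplies b')\) lies in \(\widetilde{A}\), which by the displayed identity is exactly the claim.

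I do not anticipate any real obstacle here: the substantive work has already been carried out in Lemma~\ref{SSH_Lemma_E} (which in turn rests on Lemmas~\ref{SSH_Lemma_A} and~\ref{SSH_Lemma_C}). The present lemma is a clean reduction to it, the only ingredients being the implication-swapping identity together with the closure property recorded in Lemma~\ref{SSH_Lemma_D}.
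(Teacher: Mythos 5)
Your proposal is correct and follows exactly the same route as the paper's proof: rewrite the expression via the implication-swapping identity as \((x'_1\wedge x'_2) \iimplies (((x_1\wedge x_2)\iimplies b)\iimplies b')\), then apply Lemma~\ref{SSH_Lemma_E} followed by Lemma~\ref{SSH_Lemma_D}. Your additional remark that \(x'_1\wedge x'_2\in X_1\ojoin X_2\) (needed to invoke Lemma~\ref{SSH_Lemma_D}) is a small justification the paper leaves implicit, and it is correct.
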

\begin{proof}
    We have that
    \begin{align*}
        ((x_1\wedge x_2)\iimplies b) \iimplies ((x'_1\wedge x'_2)\iimplies b') & =(x'_1\wedge x'_2) \iimplies (((x_1\wedge x_2)\iimplies b)\iimplies b')\text{,}
    \end{align*}
    which is in \(\widetilde{A}\) by Lemma~\ref{SSH_Lemma_E} followed by Lemma~\ref{SSH_Lemma_D}.
\end{proof}

\begin{lemma}\label{SSH_sub-Heyting-semilattice}
    \(\widetilde{A}\) is a sub-Heyting-semilattice of \(A\).
\end{lemma}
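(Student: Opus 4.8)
The plan is to verify the three defining closure properties of a sub-Heyting-semilattice of $A$: that $\widetilde{A}$ contains the top element $1$, that it is closed under $\wedge$, and that it is closed under $\iimplies$. The first two are essentially already in hand. The element $1=(1\iimplies 1)\wedge(1\wedge 1)$ exhibits $1\in\widetilde{A}$ (alternatively, $A_1\oleq\widetilde{A}$ and $1\in A_1$), and closure under meets is precisely Lemma~\ref{SSH_meets}. So the entire content lies in closure under implication, which I expect to be the main obstacle.

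To handle implication, I would take two arbitrary elements $a=((x_1\wedge x_2)\iimplies b)\wedge(x'_1\wedge x'_2)$ and $a'=((y_1\wedge y_2)\iimplies c)\wedge(y'_1\wedge y'_2)$ of $\widetilde{A}$ and reduce $a\iimplies a'$ to the already-proven lemmas, using the two distributivity rules $u\iimplies(v\wedge w)=(u\iimplies v)\wedge(u\iimplies w)$ and $(u\wedge v)\iimplies w=u\iimplies(v\iimplies w)$. First, distributing over the meet in the target gives
\[
a\iimplies a'=\bigl(a\iimplies((y_1\wedge y_2)\iimplies c)\bigr)\wedge\bigl(a\iimplies(y'_1\wedge y'_2)\bigr),
\]
so by Lemma~\ref{SSH_meets} it suffices to show that each of the two factors lies in $\widetilde{A}$. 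Next, for an arbitrary target $z$, I would strip the meet in the source, writing $a\iimplies z=((x_1\wedge x_2)\iimplies b)\iimplies\bigl((x'_1\wedge x'_2)\iimplies z\bigr)$, and simplify the inner term $(x'_1\wedge x'_2)\iimplies z$, using that $x'_1\wedge x'_2\in X_1\ojoin X_2$ by Remark~\ref{Remark Join in HSLat}.

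The two resulting cases then fall to the existing lemmas. When $z=y'_1\wedge y'_2\in X_1\ojoin X_2$, the inner term $(x'_1\wedge x'_2)\iimplies(y'_1\wedge y'_2)$ is again in $X_1\ojoin X_2$, since that join is a sub-Heyting-semilattice of $X$ and hence closed under $\iimplies$; Lemma~\ref{SSH_Lemma_C} then places the whole expression in $X_1\ojoin X_2\oleq\widetilde{A}$. When $z=(y_1\wedge y_2)\iimplies c$, I would rewrite the inner term as $\bigl((x'_1\wedge y_1)\wedge(x'_2\wedge y_2)\bigr)\iimplies c$ via $u\iimplies(v\iimplies w)=(u\wedge v)\iimplies w$ together with commutativity of $\wedge$; since $x'_1\wedge y_1\in X_1$ and $x'_2\wedge y_2\in X_2$, the factor $a\iimplies z$ then has exactly the shape covered by Lemma~\ref{SSH_Lemma_F} and so lies in $\widetilde{A}$. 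Combining the two cases through Lemma~\ref{SSH_meets} yields $a\iimplies a'\in\widetilde{A}$.

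The step requiring the most care is the bookkeeping in the implication case: tracking which variables land in $X_1$ versus $X_2$ after each application of $(u\wedge v)\iimplies w=u\iimplies(v\iimplies w)$, and invoking the commutation of $X_1$ with $X_2$ (through Lemma~\ref{lemma commute in hslat} and Remark~\ref{Remark Join in HSLat}) exactly where it is needed to keep $X_1\ojoin X_2=\{x_1\wedge x_2\}$ closed under the operations. Everything else is a purely mechanical reduction to Lemmas~\ref{SSH_meets}, \ref{SSH_Lemma_C} and \ref{SSH_Lemma_F}.
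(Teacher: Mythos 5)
Your proof is correct and takes essentially the same route as the paper's: both reduce \(a\iimplies a'\) via the two distributivity identities to two factors, one placed in \(X_1\ojoin X_2\oleq\widetilde{A}\) by Lemma~\ref{SSH_Lemma_C} and the other in \(\widetilde{A}\) by Lemma~\ref{SSH_Lemma_F}, before concluding with Lemma~\ref{SSH_meets}. The only (harmless) deviation is that where the paper keeps the element of \(X_1\ojoin X_2\) as an outer factor \(x\iimplies(\cdots)\) and invokes Lemma~\ref{SSH_Lemma_D}, you absorb it into the antecedent via \(u\iimplies(v\iimplies w)=(u\wedge v)\iimplies w\) so that Lemma~\ref{SSH_Lemma_F} applies directly.
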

\begin{proof}
    Let \(x\), \(x'\), \(y\) and \(y'\) be elements of \(X_1\ojoin X_2\) and \(b\) and \(b'\) elements of \(B\). According to Lemma~\ref{SSH_meets}, we only need to show that \((x\wedge (y\iimplies b)) \iimplies (x'\wedge (y'\iimplies b'))\) is in \(\widetilde{A}\).
    We have
    \begin{align*}
         & (x\wedge (y\iimplies b)) \iimplies (x'\wedge (y'\iimplies b'))                                              \\
         & = ((x\wedge (y\iimplies b)) \iimplies x') \wedge ((x\wedge (y\iimplies b)) \iimplies (y'\iimplies b'))      \\
         & = (((y\iimplies b)\wedge x) \iimplies x') \wedge ((x\wedge (y\iimplies b)) \iimplies (y'\iimplies b'))      \\
         & = ((y\iimplies b) \iimplies (x\iimplies x'))\wedge (x\iimplies ((y\iimplies b)\iimplies (y'\iimplies b'))).
    \end{align*}
    The first term of this meet is in \(\widetilde{A}\) by Lemma~\ref{SSH_Lemma_C} and the second by Lemma~\ref{SSH_Lemma_F}
    combined with Lemma~\ref{SSH_Lemma_D}. We conclude by Lemma~\ref{SSH_meets}.
\end{proof}

\subsection*{A proof of \SSH{} via commutators}
The condition \SSH{} can be formulated in several equivalent ways. Our second proof that \(\HSLat\) satisfies it depends on the following one, which is a reformulation of item~\((iv)\) in Theorem~4.12 of~\cite{MFVdL3}:
\begin{quote}
    For any three subobjects \(X\), \(Y\), \(Z \oleq A\) of \(A\) such that \(X^Z = X\) and \(Y^Z = Y\), we have that \([X,Y] = 0\) implies~\([X,Y,Z] = 0\).
\end{quote}

We need the following lemma:

\begin{lemma}\label{lemmabottom}
    Let \(F_n\) be the free Heyting semilattice with \(n\) generators \(x_1, \dots , x_n\). Then \(F_n\) is a bounded Heyting semilattice with \(0= x_1 \wedge \dots \wedge x_n\).
\end{lemma}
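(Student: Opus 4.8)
The plan is to set $m \coloneq x_1 \wedge \cdots \wedge x_n$ and to show directly that $m$ is the least element of $F_n$, that is, that $m \leq a$ for every $a \in F_n$; this is exactly what it means for $F_n$ to be bounded below by $0 = m$. Since $F_n$ is the \emph{free} Heyting semilattice on $x_1, \dots, x_n$, every one of its elements is represented by a term built from the generators using the constant $1$ and the operations $\wedge$ and $\iimplies$. I would therefore argue by structural induction on such a term $t$, establishing that the inequality $m \leq t$ holds in $F_n$.

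For the base cases, if $t = 1$ then $m \leq 1$ because $1$ is the top element, and if $t = x_i$ then $m \leq x_i$ because $m$ is a meet having $x_i$ among its factors. The inductive step splits into two cases. If $t = s \wedge s'$ with $m \leq s$ and $m \leq s'$ by the induction hypothesis, then $m \leq s \wedge s'$ since the meet is the greatest lower bound. If $t = s \iimplies s'$, the key observation is the general identity $s' \leq s \iimplies s'$ recalled among the basic properties in the introduction; combined with $m \leq s'$ from the induction hypothesis, this yields $m \leq s \iimplies s'$. (Notably, this last case does not even use the hypothesis on $s$.) This exhausts all the ways a term can be formed, so $m \leq a$ for every $a \in F_n$, whence $m$ is the bottom element and $F_n$ is bounded.

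I do not expect any genuine obstacle here. The only point requiring a word of care is that the statement $m \leq t$ must be well defined on equivalence classes of terms rather than on terms themselves; but this is immediate, since $\leq$ refers to the intrinsic semilattice order of $F_n$ and congruent terms denote equal elements. The real crux of the argument is simply the identity $s' \leq s \iimplies s'$, which collapses the implication case to an instance of transitivity and thereby makes the whole induction routine.
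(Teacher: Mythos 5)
Your proof is correct, but it takes a genuinely different route from the paper. The paper deduces the lemma from a normal-form result of Balbes (Lemma~4.1 of the cited reference): every element of \(F_n\) can be written as \(\bigwedge_{v\in V}\bigwedge_{i}(v\iimplies x_i)\), after which the inequality \(x_1\wedge\dots\wedge x_n\leq v\iimplies x_i\) follows from the adjunction between \(\wedge\) and \(\iimplies\); it also opens by recalling that \(F_n\) is finite. Your structural induction on terms avoids both ingredients: it needs no external normal-form theorem and no finiteness of \(F_n\), only the elementary identity \(s'\leq s\iimplies s'\) already listed in the introduction, and your remark about well-definedness on congruence classes of terms is exactly the right (and sufficient) point of care, since the claim \(m\leq t\) concerns the element denoted by \(t\). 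What your approach buys is a more elementary, self-contained argument that in fact proves something slightly stronger: in \emph{any} Heyting semilattice generated by \(x_1,\dots,x_n\), the meet of the generators is the least element, freeness playing no role. What the paper's approach buys is the explicit Balbes normal form itself, which the authors reuse later (the join formula in the second proof of Theorem~\ref{HSLat is SSH} comes from the same source), so their citation does double duty in the article even though, for this lemma in isolation, your induction is the lighter tool.
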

\begin{proof}
    First, we recall that \(F_n\) is finite and thus bounded. Next, let \(x\in F_n\), then by \cite[Lemma~4.1]{Balb} we know that there exists a subset \(V\) of \(F_n\) such that
    \begin{align*}
        x= \bigwedge_{v\in V } \bigwedge_{i = 1,\dots, n}(v\iimplies x_i).
    \end{align*}
    However, for all \(i\) we have \( x_1 \wedge\dots\wedge x_n\leq v\iimplies x_i\) if and only if \( (x_1\wedge\dots\wedge x_n)\wedge v \leq x_i\) which is always true. Therefore, we proved that \(x_1\wedge\dots\wedge x_n\leq x\).
\end{proof}

\begin{proof}[Proof of Theorem~\ref{HSLat is SSH} via commutators]
    Let \(A\) be a Heyting semilattice and \(X\), \(Y\), \(Z\oleq A\) be three subobjects of it such that \(X^Z=X\), \(Y^Z=Y\) and \([X,Y]=0\). Given \(t\in [X,Y,Z]\), we want to prove that \(t=1\). The strategy of the proof is to first construct an element \(t'\) of the ternary commutator \([X,Y,Z]\) such that \(t'\leq t\). Then we show that under the assumptions \(X^Z=X\), \(Y^Z=Y\) and \([X,Y]=0\), we have that \(t'=1\), which allows us to conclude.

    \subsubsection*{Construction of \(t'\)}

    Since \(t\) is an element of the ternary commutator \([X,Y,Z]\), by definition it is the image of a word \(w={\Theta(\vec{x}, \vec{y}, \vec{z})}\in {X \diamond Y \diamond Z}\) where \(\vec{x}=(x_1,\dots, x_m)\in X^m\), \(\vec{y}=(y_1,\dots,y_n)\in Y^n\) and~\(\vec{z}=(z_1, \dots, z_p)\in Z^p\) for some natural numbers \(m\), \(n\), \(p\geq 1\).

    Let us fix some notation. For \(i=1,\dots , m\), we denote \(X_i\) to be the free algebra (in the variety \(\HSLat\)) on a single generator \(x_i\), i.e., \(X_i=\lbrace 1, x_i \rbrace\). In the same way we define  \(Y_j\) and \(Z_k\) for all \(j=1,\dots, n\) and \(k=1,\dots , p\). We write \(\widetilde{X}\coloneq \coprod_{i=1}^m X_i\), \(\widetilde{Y}\coloneq \coprod_{j=1}^n Y_j\), \(\widetilde{Z}\coloneq \coprod_{k=1}^p X_k\) and \(W\coloneq \widetilde{X} + \widetilde{Y} + \widetilde{Z}\). In \(W\), we consider the following three elements:
    \begin{align*}
        x\coloneq x_1 \wedge \dots \wedge x_m, \quad y\coloneq y_1\wedge\dots \wedge y_n, \quad z\coloneq z_1 \wedge \dots \wedge z_p.
    \end{align*}
    Now, since \(W\) is finitely generated (by the \(x_i\), \(y_j\) and \(z_k\)), we know from~\cite{McK, NeWh} that it is a finite distributive lattice, in which we can moreover~\cite[formula above Lemma~4.3]{Balb} compute the join \(w'\coloneq x\vee y\vee z\) with the following formula:
    \begin{align*}
        w'\coloneq x\vee y\vee z = \bigwedge_{s=x_1, \dots, z_p}\Bigl(\bigl(\bigwedge_{p=x,y,z} (p \iimplies s)\bigr)\iimplies s\Bigr).
    \end{align*}
    Using that \(x\iimplies x_i= y\iimplies y_j= z\iimplies z_k=1\) for all \(i\), \(j\), \(k\), we rewrite \(w'\) as
    \begin{align*}
        w'= & (\bigwedge_{i=1}^m ((y\iimplies x_i) \wedge (z\iimplies x_i))\iimplies x_i)        \\
            & \wedge (\bigwedge_{j=1}^n ((x\iimplies y_j) \wedge (z\iimplies y_j))\iimplies y_j) \\
            & \wedge (\bigwedge_{k=1}^p ((x\iimplies z_k) \wedge (y\iimplies z_k))\iimplies z_k)
    \end{align*}
    We claim that the image of the word \(w'\) in \(A\) is the needed element \(t'\).

    \subsubsection*{Is \(t'\) an element of \([X,Y,Z]\)?}

    It suffices to show that \(w'\) is an element of the ternary cosmash product \(\widetilde{X}\diamond \widetilde{Y} \diamond \widetilde{Z}\). First, it is indeed an element of \(\widetilde{X} + \widetilde{Y} + \widetilde{Z}\). Secondly, by replacing in \(w'\) each letter from \(\widetilde{X}\) by \(1\) we get:
    \begin{align*}
         & 1 \wedge (\bigwedge_{j=1}^n ((1\iimplies y_j) \wedge (z\iimplies y_j))\iimplies y_j) \wedge (\bigwedge_{k=1}^p ((1\iimplies z_k) \wedge (y\iimplies z_k))\iimplies z_k) \\
         & = \bigwedge_{j=1}^n (( y_j \wedge (z\iimplies y_j))\iimplies y_j) \wedge \bigwedge_{k=1}^p ((z_k \wedge (y\iimplies z_k))\iimplies z_k)                                 \\
         & = \bigwedge_{j=1}^n ( y_j \iimplies y_j) \wedge \bigwedge_{k=1}^p (z_k \iimplies z_k) =1.
    \end{align*}

    The same reasoning works to prove that replacing letters from \(Y\) or \(Z\) by \(1\) in the word \(w'\) makes it vanish. This shows that \(w'\in \widetilde{X}\diamond \widetilde{Y} \diamond \widetilde{Z}\) and thus \(t'\in [X,Y,Z]\).

    \subsubsection*{Is \(t'\leq t\) in \(A\)?}

    In order to prove this, we show that \(w'\leq w\) in \(W\). We conclude by the fact the canonical morphism from \(W\) to \(A\) preserves the order.

    First, we observe that both \(w\) and \(w'\) are elements of \([\widetilde{X},\widetilde{Y},\widetilde{Z}]\). By arithmeticity of the category \(\HSLat\), Theorem~\ref{theorem arithm n-comm} gives us that
    \begin{align*}
        [\widetilde{X},\widetilde{Y},\widetilde{Z}] = \widetilde{X}^{\widetilde{Y}\ojoin\widetilde{Z}} \omeet \widetilde{Y}^{\widetilde{X}\ojoin\widetilde{Z}} \omeet \widetilde{Z}^{\widetilde{X}\ojoin\widetilde{Y}}.
    \end{align*}
    Note that the smallest element of \(\widetilde{X}\), \(\widetilde{Y}\) and \(\widetilde{Z}\) is respectively \(x\), \(y\) and \(z\) (see Lemma~\ref{lemmabottom}). Therefore the smallest element of \(\widetilde{X}^{\widetilde{Y}\ojoin\widetilde{Z}}\), \(\widetilde{Y}^{\widetilde{X}\ojoin\widetilde{Z}}\) and \(\widetilde{Z}^{\widetilde{X}\ojoin\widetilde{Y}}\) is again \(x\), \(y\) and \(z\) respectively (see Lemma~\ref{lemma normal closure finite}). The smallest element of \([\widetilde{X},\widetilde{Y},\widetilde{Z}]\) should then be the smallest element greater than \(x\), \(y\) and \(z\). In other words, it is their join \(x\vee y \vee z=w'\). This implies that \(w'\leq w\) in \([\widetilde{X},\widetilde{Y},\widetilde{Z}]\) and thus in \(W\).

    \subsubsection*{Does \(X^Z=X\), \(Y^Z=Y\), \([X,Y]=0\) imply that \(t'=1\)?}

    First recall from Lemma~\ref{lemma commute in hslat} that \([X,Y]=0\) implies \(x\iimplies y =y\) and \(y\iimplies x=x\) for all \(x\in X\) and \(y\in Y\). Therefore, we have that for all \(i\)
    \begin{align*}
        ((y\iimplies x_i) \wedge (z\iimplies x_i))\iimplies x_i & = (x_i \wedge (z\iimplies x_i))\iimplies x_i \\
                                                                & = x_i \iimplies x_i =1.
    \end{align*}
    The same reasoning gives us
    \begin{align*}
        ((x\iimplies y_j) \wedge (z\iimplies y_j))\iimplies y_j =1
    \end{align*}
    for all \(j\). Under the condition \([X,Y]=0\), this allows us to rewrite \(t'\) as
    \begin{align*}
        t' = \bigwedge_{k=1}^p ((x\iimplies z_k) \wedge (y\iimplies z_k))\iimplies z_k.
    \end{align*}
    Next, using basic calculation rules of Heyting semilattices we get that
    \begin{align*}
        ((x\iimplies z_k) \wedge (y\iimplies z_k))\iimplies z_k & = ((x\iimplies z_k) \iimplies (y\iimplies z_k))\iimplies ((x\iimplies z_k)\iimplies z_k) \\
                                                                & = (y\iimplies ((x\iimplies z_k)\iimplies z_k))\iimplies ((x\iimplies z_k)\iimplies z_k)
    \end{align*}
    for all \(k\). We may see that, for all \(k\), \((x\iimplies z_k)\iimplies z_k\) is in \(X^Z\), since it is the image of a word in \(Z\flat X\): it suffices to replace \(x\) by \(1\). Using the hypothesis~\(X^Z=X\), we can rewrite \((x\iimplies z_k)\iimplies z_k\) as \(x_k'\in X\) for all \(k\). Thus, under our assumptions, \(t'\) rewrites as
    \begin{align*}
        t'= \bigwedge_{k=1}^p (y\iimplies x'_k)\iimplies x'_k.
    \end{align*}
    This last expression of \(t'\) vanishes, since \((y\iimplies x'_k)\iimplies x'_k \in [X,Y]=0\) for all \(k\).
\end{proof}

In~\cite{MFVdL3}, the category \(\HSLat\) was used in an attempt to prove that the converse of the implication \(\SSH \To \SH\) does not hold. Knowing now that this category does in fact satisfy \SSH{}, it remains an open problem to find either a counterexample which separates these conditions or a proof of their equivalence.

\subsection*{The condition (W)}
In this section, we provide several reformulations of the condition \W{}, introduced and studied in~\cite{MFVdL3}, and we show that it does not hold in the category of Heyting semilattices.

Let us first recall two reformulations—one for \SH{} and one for \W{}—which are in the same spirit as the one used for \SSH{} in the proof of Theorem~\ref{HSLat is SSH} via commutators. Thanks to item~\((vii)\) in Proposition~3.5 of~\cite{MFVdL3}, the condition \SH{} can be reformulated as:
\begin{quote}
    For any three subobjects \(X\), \(Y\), \(Z \oleq A\) such that \(X^A = X\) and \(Y^A = Y\), we have that \([X,Y] = 0\) implies \([X,Y,Z] = 0\).
\end{quote}
Similarly, thanks to item~\((iii)\) in Proposition~5.1 of~\cite{MFVdL3}, the condition \W{} can be reformulated as:
\begin{quote}
    For any three subobjects \(X\), \(Y\), \(Z \oleq A\), we have that \([X,Y] = 0\) implies \([X,Y,Z] = 0\).
\end{quote}
These reformulations make the implications
\begin{align*}
    \W \To \SSH \To \SH
\end{align*}
particularly clear.

\begin{theorem}
    Let \(\C\) be a semi-abelian category. The following conditions are equivalent:
    \begin{tfae}
        \item \(\C\) satisfies \W{};
        \item \(\C\) satisfies \SH{}, and for all subobjects \(X, Y \oleq A\), we have
        \[
            [X,Y] = 0 \quad \Leftrightarrow \quad [X^A, Y^A] = 0.
        \]
    \end{tfae}
\end{theorem}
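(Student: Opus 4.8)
The plan is to run everything through the two reformulations recalled just above: \W{} reads ``\([X,Y]=0 \To [X,Y,Z]=0\) for all \(X,Y,Z\oleq A\)'', while \SH{} is the same implication under the extra hypothesis that \(X\) and \(Y\) are normal, i.e. \(X^A=X\) and \(Y^A=Y\). Throughout I will freely use monotonicity of the (higher) Higgins commutator, its invariance under permuting its entries, the join-decomposition and duplicate-removal rules (2.21 and 2.22 in~\cite{HVdL}), the containment of iterated Higgins commutators in the corresponding higher one (the always-valid direction of the comparison discussed via~\cite{SVdL3}), and the formula \(X^A=X\ojoin[X,A]\) from Corollary~\ref{Corollary MM-NC}.

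The implication (ii) \(\To\) (i) is short. Given arbitrary \(X,Y,Z\oleq A\) with \([X,Y]=0\), the assumed equivalence yields \([X^A,Y^A]=0\). Since normal closures are normal, \(X^A\) and \(Y^A\) satisfy \((X^A)^A=X^A\) and \((Y^A)^A=Y^A\), so \SH{} applies to the triple \((X^A,Y^A,Z)\) and gives \([X^A,Y^A,Z]=0\). Monotonicity together with \(X\oleq X^A\) and \(Y\oleq Y^A\) then forces \([X,Y,Z]\oleq[X^A,Y^A,Z]=0\), which is exactly \W{}.

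For (i) \(\To\) (ii), the implication \(\W\To\SH\) is immediate from the reformulations, and in the claimed equivalence the direction \([X^A,Y^A]=0\To[X,Y]=0\) is just monotonicity with \(X\oleq X^A\), \(Y\oleq Y^A\). The real content is the converse: assuming \W{} and \([X,Y]=0\), I must prove \([X^A,Y^A]=0\). Writing \(U\coloneq[X,A]\) and \(V\coloneq[Y,A]\), so that \(X^A=X\ojoin U\) and \(Y^A=Y\ojoin V\), the plan is to expand \([X\ojoin U,\,Y\ojoin V]\) by the join-decomposition rule into the join of the nine commutators \([X,Y]\), \([X,V]\), \([X,Y,V]\), \([U,Y]\), \([U,V]\), \([U,Y,V]\), \([X,U,Y]\), \([X,U,V]\) and \([X,U,Y,V]\), and then to show that every summand is \(0\). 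For each of them I open the inner commutators \(U=[X,A]\) and \(V=[Y,A]\) using the containment of iterated commutators in higher ones; because opening \(U\) retains an \(X\)-entry and an \(A\)-entry, and opening \(V\) retains a \(Y\)-entry and an \(A\)-entry, each summand is bounded above by a higher Higgins commutator all of whose entries lie in \(\{X,Y,A\}\) and which contains at least one \(X\) and at least one \(Y\). Removing the duplicated entries (2.22 in~\cite{HVdL}) collapses each such bound to \([X,Y,A]\) (the lone summand \([X,Y]\) being zero outright). Finally \W{} applied to \([X,Y]=0\) gives \([X,Y,A]=0\), so all nine summands vanish and hence \([X^A,Y^A]=0\).

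The step I expect to be delicate is precisely the use of the containment of an iterated commutator in the corresponding higher Higgins commutator, in the form ``\([\ldots,[P,Q],\ldots]\oleq[\ldots,P,Q,\ldots]\)'' applied in an arbitrary entry, and twice over for the summand \([U,V]=[[X,A],[Y,A]]\), where nesting occurs in both entries. This is exactly the phenomenon governed by~\cite{SVdL3}: the subtlety is that the reverse containments generally fail, so one must invoke only the direction that always holds, and one must check that opening the inner commutators genuinely preserves an \(X\)- and a \(Y\)-entry in every one of the nine terms—rather than collapsing \(U\) and \(V\) into plain copies of \(A\), which the crude bound \(U,V\oleq A\) would do, losing the information that makes the reduction to \([X,Y,A]\) possible.
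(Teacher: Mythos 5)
Your proposal is correct and takes essentially the same route as the paper: the identical nine-term join decomposition of \([X^A,Y^A]=[X\ojoin[X,A],\,Y\ojoin[Y,A]]\) via Corollary~\ref{Corollary MM-NC} and the join-decomposition formula, the same reduction of each summand into \([X,Y,A]\) by the nesting containment and duplicate-removal rules of Proposition~2.21 in~\cite{HVdL} (with the lone term \([X,Y]\) vanishing by hypothesis), and \W{} applied with \(Z=A\), while your (ii)~\(\To\)~(i) argument matches the paper's word for word. Your cautionary remark about using only the always-valid direction of the iterated-versus-higher commutator comparison is exactly the content of the paper's appeal to items~(iv) and~(v) there, so no gap remains.
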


\begin{proof}
    (i) \(\Rightarrow\) (ii): It is clear that \W{} implies \SH{}. Moreover, by monotonicity of the Higgins commutator, we immediately have that \([X^A, Y^A] = 0\) implies \([ X, Y ] = 0\).

    Now, suppose \( [X,Y] = 0 \). We want to show that \( [X^A, Y^A] = 0 \). Applying Corollary~\ref{Corollary MM-NC} together with the join decomposition formula (Proposition~2.22 in~\cite{HVdL}), we obtain:
    \begin{align*}
        [X^A,Y^A]
         & = [X \ojoin [X,A],\; Y \ojoin [Y,A]]                                    \\
         & = [X,[X,A],Y] \ojoin [X,[X,A],[Y,A]] \ojoin [X,[X,A],Y,[Y,A]]           \\
         & \quad \ojoin [X,Y] \ojoin [X,Y,[Y,A]] \ojoin [X,[Y,A]] \ojoin [[X,A],Y] \\
         & \quad \ojoin [[X,A],[Y,A]] \ojoin [[X,A],Y,[Y,A]].
    \end{align*}

    Since \([X,Y] = 0\), we may apply items~(iv) and~(v) of Proposition~2.21 in~\cite{HVdL} to deduce that each term in the decomposition above is contained in the ternary commutator \([X,Y,A]\). But under the assumption that \(\C\) satisfies \W{}, this commutator vanishes. Hence \([X^A, Y^A] = 0\), as required.

    (ii) {\(\To\)} (i): Let \(X, Y, Z \oleq A\) be subobjects such that \([X,Y] = 0\). By hypothesis, we have \([X^A, Y^A] = 0\), and since \(\C\) satisfies \SH{}, it follows that \([X^A, Y^A, Z] = 0\). But the ternary commutator \([X,Y,Z]\) is contained in \([X^A, Y^A, Z]\), so we conclude that \([X,Y,Z] = 0\), as required.
\end{proof}

\begin{corollary}\label{HSLat is not W}
    The category \(\HSLat\) does not satisfy \W{}.
\end{corollary}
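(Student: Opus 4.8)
The plan is to deduce this immediately from the equivalence theorem just proved. Since \(\HSLat\) is arithmetical and strongly protomodular, it satisfies \SH{} (as recalled following Proposition~\ref{Smith implies Huq}, and reconfirmed in Theorem~\ref{theorem ToN consequences}). Hence, by the implication (i) \(\To\) (ii) of the preceding theorem, it suffices to exhibit subobjects \(X\), \(Y \oleq A\) of some Heyting semilattice \(A\) for which \([X,Y] = 0\) while \([X^A, Y^A] \neq 0\): this shows that condition (ii) of that theorem fails, and therefore so does \W{}.

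First I would reuse the five-element Heyting semilattice \(A\) of Example~\ref{Normal necessary for normal centraliser}, namely the one with \(0 < x < z < 1\), \(0 < y < z < 1\) and \(x\), \(y\) incomparable, together with the subobjects \(X = \{x, 1\}\) and \(Y = \{y, 1\}\). As observed there, \(x\psj y = 1\), so by Proposition~\ref{pseudo-join commutativity} the pair \(X\), \(Y\) Huq-commutes; equivalently \([X,Y] = 0\).

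Next I would compute the normal closures by means of Lemma~\ref{lemma normal closure finite}, which applies because \(A\) is finite. Since \(\bigwedge_{u\in X} u = x\) and \(\bigwedge_{u\in Y} u = y\), we obtain \(X^A = \ua{x} = \{x, z, 1\}\) and \(Y^A = \ua{y} = \{y, z, 1\}\). The key point is that these two filters share the element \(z\), and \(z\psj z = z \neq 1\) by property (3) of the pseudo-join. Hence, again by Proposition~\ref{pseudo-join commutativity}, the pair \(X^A\), \(Y^A\) does \emph{not} Huq-commute, so that \([X^A, Y^A] \neq 0\). This violates condition (ii) of the theorem, and the conclusion follows.

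I do not expect any genuine obstacle here: the whole content is carried by the equivalence theorem together with the convenient reformulation of Huq-commutativity through the pseudo-join. The only things to watch are that the normal closures are identified correctly via Lemma~\ref{lemma normal closure finite} and that the witness \(z\) indeed lies in both closures. The phenomenon is conceptually transparent: passing from the generators \(x\), \(y\) (which satisfy \(x\psj y = 1\)) to their up-closures introduces the common element \(z\), and the failure \(z\psj z = z\) is exactly what breaks commutativity at the level of normal closures.
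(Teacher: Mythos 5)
Your proof is correct and follows essentially the same route as the paper, which likewise deduces the corollary from the preceding theorem by noting that in Example~\ref{Normal necessary for normal centraliser} the subobjects \(X\) and \(Y\) Huq-commute while their normal closures \(X^A=\ua{x}\) and \(Y^A=\ua{y}\) do not (your pseudo-join witness \(z\psj z=z\neq 1\) makes explicit what the paper leaves implicit). The only cosmetic remark is that invoking \SH{} is unnecessary: to refute condition (ii) of the theorem it suffices that the commutator--closure equivalence fails, so that part of your argument can be dropped.
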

\begin{proof}
    In Example~\ref{Normal necessary for normal centraliser} the subobjects \(X\) and \(Y\) commute, but their normal closures do not.
\end{proof}

\begin{corollary}\label{Cor Cosmash Assoc}
    The category \(\HSLat\) of Heyting semilattices is not \emph{cosmash associative} (see \cite{Cosmash} and the comment right above Definition~\ref{Def Higher Higgins}).
\end{corollary}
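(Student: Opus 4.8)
The plan is to show that any cosmash associative semi-abelian category satisfies the condition \W{}, and then to conclude by contraposition: since \(\HSLat\) fails \W{} by Corollary~\ref{HSLat is not W}, it cannot be cosmash associative. Using the commutator reformulation of \W{} recalled above, it suffices to prove that cosmash associativity forces the inequality \([X,Y,Z]\oleq [[X,Y],Z]\) for all subobjects \(X\), \(Y\), \(Z\oleq A\). Indeed, once this is known, \([X,Y]=0\) gives \([[X,Y],Z]=[0,Z]=0\) and hence \([X,Y,Z]=0\), which is exactly \W{}.

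To establish the inequality, I would compare the two ways of computing the relevant image in \(A\). By definition \([X,Y,Z]\) is the regular image of \(X\diamond Y\diamond Z\) along \(\langle x,y,z\rangle\colon X+Y+Z\to A\). Cosmash associativity provides an isomorphism \(X\diamond Y\diamond Z\cong (X\diamond Y)\diamond Z\). On the other hand, the composite \(X\diamond Y\rightarrowtail X+Y\xrightarrow{\langle x,y\rangle}A\) factors as a regular epimorphism \(X\diamond Y\twoheadrightarrow [X,Y]\) followed by the inclusion \([X,Y]\rightarrowtail A\); applying \(({-})\diamond Z\) and using that the binary cosmash product preserves regular epimorphisms in each variable (\cite[Corollary~4.2]{MM-NC}, as already exploited in the proof of Proposition~\ref{image of bemol}), we obtain a regular epimorphism \((X\diamond Y)\diamond Z\twoheadrightarrow [X,Y]\diamond Z\). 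Composing with \([X,Y]\diamond Z\rightarrowtail [X,Y]+Z\to A\), whose image is precisely \([[X,Y],Z]\), shows that the image of \((X\diamond Y)\diamond Z\) in \(A\) equals \([[X,Y],Z]\).

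The main obstacle is to verify that, through the cosmash associativity isomorphism, this evaluation \((X\diamond Y)\diamond Z\to A\) agrees with the restriction of \(\langle x,y,z\rangle\) to \(X\diamond Y\diamond Z\); equivalently, that the canonical comparison between the ternary and the iterated binary cosmash products is compatible with the evaluation maps into \(A\). This compatibility is what identifies the image computed on the iterated side with \([X,Y,Z]\), and it is the one delicate point, since in general the ternary Higgins commutator is \emph{not} built from iterated binary ones (see \cite{SVdL3}). I expect it to follow from the explicit description of the comparison morphism in \cite{Cosmash}; granting it, we obtain \([X,Y,Z]=[[X,Y],Z]\), hence \W{}, and the corollary follows by contraposition.
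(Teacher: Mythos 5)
Your proposal follows the paper's strategy exactly at the top level: the paper's entire proof of this corollary is the single observation, cited from~\cite{CocoThesis}, that cosmash associativity implies \W{}, combined with Corollary~\ref{HSLat is not W}. The difference is that you re-derive the cited implication instead of invoking it, and your derivation is sound. In particular, the one step you grant---compatibility of the associativity isomorphism with the evaluation maps into \(A\)---does hold, because cosmash associativity in~\cite{Cosmash} is the invertibility of the \emph{canonical} comparison \((X\diamond Y)\diamond Z\to X\diamond Y\diamond Z\): this is the factorisation through the ternary cosmash product of the composite \((X\diamond Y)+Z\to X+Y+Z\) induced by the kernel inclusion \(k\colon X\diamond Y\rightarrowtail X+Y\) (one checks directly that the three ``delete one variable'' morphisms out of \(X+Y+Z\) vanish on it, so it lands in the joint kernel as in Lemma~\ref{joint kernels lemma}). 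Restricting \(\langle x,y,z\rangle\) along this canonical map yields precisely \(\langle \langle x,y\rangle \circ k,\, z\rangle\), i.e.\ the evaluation whose image you computed, via the regular epimorphism \((X\diamond Y)\diamond Z\twoheadrightarrow [X,Y]\diamond Z\), to be \([[X,Y],Z]\); hence under cosmash associativity \([X,Y,Z]=[[X,Y],Z]\), so \([X,Y]=0\) forces \([X,Y,Z]=0\) and \W{} follows, exactly as you argue. So: correct, same route as the paper, with the added value that your write-up makes the key implication self-contained where the paper defers to~\cite{CocoThesis}.
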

\begin{proof}
    It is shown in~\cite{CocoThesis} that cosmash associativity implies \W{}.
\end{proof}

\section{Algebraic Coherence}\label{section AC}

Introduced in~\cite{acc}, in the context of a semi-abelian category the condition called \defn{algebraic coherence} \AC{} has strong categorical-algebraic consequences such as \SH{}, \SSH{} and \NH{}. Nevertheless, there are many examples of such categories: all varieties of groups, Lie algebras, Poisson algebras, rings without unit, associative algebras; the category of cocommutative Hopf algebras over a field of characteristic zero; all abelian categories; etc. Non-examples include loops and digroups (because \SH{} is not satisfied), non-associative rings, \(n\)-Lie algebras and Jordan algebras. In~\cite{acc}, it is mentioned (Examples~4.10) that the category \(\HSLat\) is not \AC{} because it does not satisfy \SSH{} as claimed in~\cite{MFVdL3}. However, as we already explained in Section~\ref{section SSH and W}, this claim is false and thus a clarification is needed. The goal of the present section is to confirm that Heyting semilattices do not form an algebraically coherent category. \emph{A fortiori}, \(\HSLat\) is not an \emph{algebraic logos} in the sense of~\cite{AlgebraicLogoi}; neither is it \emph{locally algebraically cartesian closed} \LACC{} in the sense of~\cite{GrayPhD,Gray2012,Bourn-Gray}.

Let us recall that a category \(\C\) with finite limits is said to be \defn{algebraically coherent} if the change-of-base functors of its fibration of points preserve jointly strongly epimorphic pairs of arrows.

\begin{theorem}\label{HSLat not AC}
    The variety \(\HSLat\) of Heyting semilattices is not algebraically coherent.
\end{theorem}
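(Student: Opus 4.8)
The plan is to argue directly from the definition. Since the kernel functor $\Ker_B\colon\Pt_B(\HSLat)\to\HSLat$ is the change-of-base functor of the fibration of points along the unique morphism $0\to B$, it suffices to exhibit a \emph{single} base $B$ for which $\Ker_B$ fails to preserve some jointly strongly epimorphic pair; this already contradicts \AC. Concretely, I would produce a split extension
\[
\begin{tikzcd}
0\ar[r]&X\arrow[r, "\kappa"]&A \arrow[r, shift left, "\alpha"] &B\ar[r]\ar[l, shift left, "\beta"]&0
\end{tikzcd}
\]
together with two sub-Heyting-semilattices $A_1,A_2\oleq A$, each containing $\beta(B)$ (so that they are subobjects of $(A,\alpha,\beta)$ in $\Pt_B(\HSLat)$), such that $A_1\ojoin A_2=A$ while $(A_1\omeet X)\ojoin(A_2\omeet X)\subsetneq X$. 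Joins in $\Pt_B(\HSLat)$ are computed in $\HSLat$ (as recalled in the proof of Theorem~\ref{HSLat is SSH}), so $A_1\ojoin A_2=A$ says exactly that $A_1,A_2$ are jointly strongly epic over $B$; and $\Ker_B$ sends a subpoint $A_i$ to its intersection $A_i\omeet X$ with $X=\Ker(\alpha)$. Thus the displayed strict inequality is precisely a failure of $\Ker_B$ to preserve joint epimorphy. (One reduces to subobjects without loss of generality, since passing to images preserves joint strong epimorphy.)

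The extension must be genuinely non-trivial, i.e.\ not a product $X\times B$: for a product, a cross meet or implication of an element of $A_1$ with one of $A_2$ lands in $X$ only when both factors already do, so $\Ker_B$ preserves joins and no counterexample arises. The mechanism I would exploit is that in a twisted extension the operator $x\mapsto(\beta(0_B)\iimplies x)$ on $X$, and more generally a cross implication $a_1\iimplies a_2$ with $a_1\in A_1$, $a_2\in A_2$ and $\alpha(a_1)\leq\alpha(a_2)$, produces an element of $X$; the point is to arrange the action so that such a cross term yields a kernel element lying outside $(A_1\omeet X)\ojoin(A_2\omeet X)$. The explicit description of semidirect products in $\HSLat$ from~\cite{MMClementinoAMontoliLSousa2015}, together with the $\widetilde A$-style computation of joins of subpoints used in the proof of Theorem~\ref{HSLat is SSH}, are the tools I would use both to build $A$ and to verify the relevant closures.

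Once the data are fixed, the verification is a finite computation: check that $A$ is a Heyting semilattice and that $\alpha$ is a morphism with section $\beta$ (noting via the Johnstone terms~\eqref{lemma protomodular terms for HSL} that $A$ is generated by $X$ and $\beta(B)$, cf.\ Remark~\ref{Remark Split Extension}); compute $A_1\ojoin A_2$ by closing $A_1\cup A_2$ under $\wedge$ and $\iimplies$ and confirm it exhausts $A$; and compute $A_i\omeet X$ and their join, confirming it is a proper sub-Heyting-semilattice of $X$.

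I expect the construction of the example, rather than its verification, to be the main obstacle. The delicate feature—apparent already on small attempts—is that a ``new'' kernel element produced by one cross implication tends to drag in, through further meets and implications, enough of $X$ to force $(A_1\omeet X)\ojoin(A_2\omeet X)=X$, so that joint epimorphy is after all preserved; in other words, mildly twisted extensions still behave coherently. The twist, and possibly the sizes of $X$ and of the base $B$, must therefore be chosen strong enough that the interaction between $A_1$ and $A_2$ escapes the join of the separate kernels while keeping $A_1\ojoin A_2=A$. This also explains why the obstruction is genuinely fibrational and cannot be detected at the level of subobjects of a fixed object: it only surfaces over a non-trivial base, which is consistent with $\HSLat$ satisfying \SSH{} (Theorem~\ref{HSLat is SSH}) yet failing \AC.
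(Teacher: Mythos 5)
Your reduction is exactly the paper's: fix a single base \(B\), exhibit a point \((A,\alpha,\beta)\) in \(\Pt_B(\HSLat)\) and two subpoints \(A_1\), \(A_2\) with \(A_1\ojoin A_2=A\) but \(\Ker(\alpha i_1)\ojoin \Ker(\alpha i_2)\subsetneq \Ker(\alpha)\); and the surrounding bookkeeping is all correct (joins of points are computed in the base category, the kernel functor sends a subpoint \(A_i\) to \(A_i\omeet X\), a pair of subobjects is jointly strongly epimorphic precisely when its join is everything, and one may reduce to subobjects via images). But the proposal stops exactly where the proof begins. The theorem is an existence claim---that such a configuration exists---and everything after ``Concretely, I would produce\dots'' is in the conditional mood; you yourself identify the construction of the example as the unresolved obstacle. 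No split extension, no subpoints, no verification: as written this is a correct proof \emph{strategy}, not a proof, and the gap is the entire mathematical content of the statement.

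For what it is worth, the missing witness is small and requires none of the semidirect-product machinery of~\cite{MMClementinoAMontoliLSousa2015} that you plan to invoke: the paper takes \(B=\{b,1\}\) the two-element chain and for \(A\) an explicit eight-element lattice in which \(X=\ua{x}\) is a five-element filter, with \(\alpha\) the quotient \(A\to A/\ua{x}\cong B\) and \(\beta\) the inclusion; then \(A_1=\{x,b,x\iimplies b,1\}\) and \(A_2=\{y,b,1\}\) satisfy \(A_1\ojoin A_2=A\), while \(X_1=\{x,1\}\) and \(X_2=\{y,1\}\) give \(X_1\ojoin X_2=\{x,y,1\}\neq X\). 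So your heuristic pessimism---that in small, mildly twisted extensions the cross terms ``drag in'' all of \(X\)---is precisely what misled you; here the new kernel elements of \(X\setminus\{x,y,1\}\) never appear in the join of the separate kernels. One further slip: your argument that a product extension cannot yield a counterexample is literally false, since in \(X\times B\) the cross implication \((1,b)\iimplies(x,b)=(x,1)\) is a kernel element neither of whose factors lies in the kernel. The conclusion is nevertheless true, for a different reason: any subpoint of a product point is forced (by exactly that cross implication, followed by meets with \(\{1\}\times B\)) to be of the form \(K\times B\), whence \(\Ker_B\) does preserve joins of subpoints of product points. So your instinct that the extension must be non-trivial was right, but with the wrong justification---and, more importantly, without the example that the theorem requires.
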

\begin{proof}
    We prove that for \(B = \{b,1\}\), the functor \(\Ker_B \colon \Pt_B(\HSLat) \rightarrow \HSLat\) does not preserve binary joins of subobjects, which implies that jointly strongly epimorphic pairs of arrows are not preserved.

    Let \(A\) be the Heyting semilattice
    \[
        \xymatrix@!@C=-2ex@R=-2ex{
        & & 1 \\
        & \bullet\ar@{-}[ur] & & \bullet\ar@{-}[ul] \\
        x\iimplies b \ar@{-}[ur] && y \ar@{-}[ul]\ar@{-}[ur] && \\
        & \bullet\ar@{-}[ul]\ar@{-}[ur] & & x \ar@{-}[ul] \\
        & & b \ar@{-}[ul]\ar@{-}[ur]
        }
    \]
    Let \(\beta\) be the inclusion of \(B\) into \(A\), and let \(\alpha \colon A \to B\) be the canonical quotient of the filter \(X = \ua x\) onto \(B\) (see Section~\ref{Section Normal Subobjects} for why this is indeed the quotient). A~straightforward computation shows that the subset \(\{b, y, x\}\) generates \(A\).

    Define \(A_1 \coloneqq \{x, b, x\iimplies b, 1\}\) and \(A_2 \coloneqq \{y, b, 1\}\), two sub-Heyting semilattices of~\(A\). It then follows that \(A = A_1 \ojoin A_2\).

    Let \(i_1 \colon A_1 \to A\), \(i_2 \colon A_2 \to A\), \(\beta_1 \colon B \to A_1\), and \(\beta_2 \colon B \to A_2\) be the respective inclusion maps. Then \((A_1, \alpha i_1, \beta_1) \ojoin (A_2, \alpha i_2, \beta_2) = (A, \alpha, \beta)\) in \(\Pt_B(\HSLat)\), since joins in a category of points are computed in the base category.

    Now observe that \(X_1 = \Ker(\alpha i_1) = \{x,1\}\) and \(X_2 = \Ker(\alpha i_2) = \{y,1\}\). However, \(X_1 \ojoin X_2 = \{x, y, 1\} \neq X\), which concludes the proof.
\end{proof}

We conclude that the implication \(\AC \To \SSH\) proved in~\cite{acc} cannot be reversed; in other words, \SSH{} is strictly weaker than \AC.

\section{Normality of unions and action accessibility}\label{Section Normalisers and AA}

In~\cite{BJK2}, the authors define a semi-abelian category to satisfy \defn{normality of unions} when for each commutative square of the form
\[
    \xymatrix{
    X\ar@{{ >}->}[d]_{m} \ar@{{ >}->}[r]^{l} & B \ar@{{ >}->}[d]^{s_B}
    \\
    C \ar@{{ >}->}[r]_-{s_C} & A
    }
\]
where all arrows are monomorphisms, if \(X\) is a normal subobject of both \(B\) and~\(C\), then \(X\) is a normal subobject in the union (join) \(B\ojoin C\) taken in \(A\). In the article, the authors explain the relationship between this condition and what they call \emph{representability of actions}.

We give a counterexample showing that this condition is not satisfied by the semi-abelian category \(\HSLat\).

\begin{theorem}\label{theorem normality of unions}
    The category \(\HSLat\) of Heyting semilattices does not satisfy normality of unions.
\end{theorem}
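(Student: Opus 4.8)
The plan is to produce an explicit finite counterexample, exploiting the description of normal subobjects as filters (Lemma~\ref{Lemma Normal is Filter}). Concretely, I would build a finite Heyting semilattice \(A\) together with two sub-Heyting-semilattices \(s_B\colon B\rightarrowtail A\) and \(s_C\colon C\rightarrowtail A\) and a common subobject \(X\oleq B\omeet C\), arranged so that \(X\) is a filter of \(B\) and a filter of \(C\) (hence normal in each, by Lemma~\ref{Lemma Normal is Filter}), yet fails to be up-closed — and therefore fails to be a filter, and is not normal — in the join \(B\ojoin C\) computed in \(A\).

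Since \(A\) is finite, \(X\) is a principal filter \(\ua_B(s)\) of \(B\) and \(\ua_C(s)\) of \(C\), where \(s=\bigwedge X\) is the bottom element of \(X\); the hypotheses \(X\normal B\) and \(X\normal C\) amount precisely to the equality \(\ua_B(s)=X=\ua_C(s)\) of up-sets. The join \(B\ojoin C\) is the sub-Heyting-semilattice of \(A\) generated by \(B\cup C\), that is, the closure of \(B\cup C\) under \(\wedge\) and \(\iimplies\). The whole point is to exhibit an element \(u\in B\ojoin C\) with \(u\geq s\) but \(u\notin X\): such a \(u\) cannot lie in \(B\) or in \(C\) (an element of \(B\) above \(s\) would already belong to \(\ua_B(s)=X\)), so it must be a genuinely new element of the generated subalgebra. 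Since a meet \(b\wedge c\geq s\) forces \(b\geq s\) and \(c\geq s\), hence \(b,c\in X\) and \(b\wedge c\in X\), the element \(u\) can only be produced by the implication operation: the key move is to find \(b\in B\) and \(c\in C\) (neither lying in the other subalgebra) for which the cross-implication \(b\iimplies c\), computed in \(A\), sits strictly between \(s\) and the top while avoiding \(X\).

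The steps I would carry out are: (1) specify \(A\) by its Hasse diagram and confirm it is a Heyting semilattice (most easily by realising it as the lattice of down-sets of a small poset, which is automatically distributive and hence Heyting); (2) write down \(B\), \(C\) and \(X\), and verify by direct computation that \(X\) is up-closed and meet-closed inside \(B\) and inside \(C\), so that \(\ua_B(s)=X=\ua_C(s)\); (3) compute enough of the generated subalgebra \(B\ojoin C\) to locate the witnessing element \(u=b\iimplies c\), checking \(s\leq u<1\) and \(u\notin X\); (4) conclude that \(X\) is not a filter of \(B\ojoin C\), so that the resulting commutative square of inclusions violates normality of unions.

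The hard part will be reconciling step~(2) with step~(3): one must arrange the up-set of \(s\) to be \emph{the same} in \(B\) and in \(C\) while the join still manufactures an element above \(s\) lying outside that up-set. The difficulty is that in a Heyting semilattice implication tends to feed elements back into \(\ua(s)\) — for instance expressions of the shape \(b\iimplies s\) or \((s\iimplies b)\iimplies b\) readily evaluate to elements \(\geq s\) — so that naive choices of \(B\) and \(C\) either enlarge \(\ua_B(s)\) or \(\ua_C(s)\) past \(X\) (destroying normality in \(B\) or \(C\)), make the two filters disagree, or fail to let the join escape \(X\) at all. The counterexample therefore hinges on choosing \(A\) so that, inside each of \(B\) and \(C\) separately, the relevant implications collapse harmlessly back into \(X\), while the single implication \(b\iimplies c\) mixing a generator of \(B\) with a generator of \(C\) breaks out of \(X\); verifying this controlled behaviour is the crux of the argument.
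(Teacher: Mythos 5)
Your framework is sound and matches the paper's strategy exactly --- the paper also proves this theorem by exhibiting an explicit finite counterexample, using the filter characterisation of Lemma~\ref{Lemma Normal is Filter} --- and your reductions are correct: in a finite algebra every filter is a principal up-set, so \(X\normal B\) and \(X\normal C\) do amount to \(\ua_B(s)=X=\ua_C(s)\) for \(s=\bigwedge X\), and your observation that meets alone cannot produce an escape element (since \(b\wedge c\geq s\) forces \(b,c\geq s\)) correctly isolates implication as the only possible source of a witness. But the proposal stops precisely at the point where the proof has to happen. The theorem is an existential statement, and its entire mathematical content is the counterexample itself; you explicitly flag the reconciliation of your steps~(2) and~(3) as ``the crux'' and then leave it unresolved. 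As written, this is a correct reduction of the problem to a finite search, not a proof: nothing in the proposal rules out the a priori possibility that the tension you describe (implication feeding elements back into \(\ua(s)\)) is unavoidable and that \(\HSLat\) in fact satisfies normality of unions.

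For comparison, the paper resolves the crux by taking \(A=C_4\times C_4\) (the square of the four-element chain), \(X=\{(2,2),(3,3)\}\) sitting on the diagonal, \(B=\{(0,1),(2,2),(3,3)\}\) and \(C=\{(1,0),(2,2),(3,3)\}\). Each of \(B\), \(C\) is a three-element chain, so implications inside \(B\) and \(C\) collapse harmlessly and \(X\) is visibly up-closed in both, while \(B\ojoin C=A\) and \(X\) fails to be up-closed in \(A\) (e.g.\ \((2,3)\geq (2,2)\) but \((2,3)\notin X\)). Note also that this example exposes a minor over-restriction in your plan: you propose to locate the witness as a single cross-implication \(b\iimplies c\) with \(b\in B\), \(c\in C\), but in this example no such one-step implication escapes \(X\); the witness \((2,3)\) only arises from an iterated term such as \(\bigl((0,1)\iimplies (1,0)\bigr)\iimplies (2,2)\). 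So in step~(3) you should allow arbitrary elements of the partially generated subalgebra as antecedents, not just generators.
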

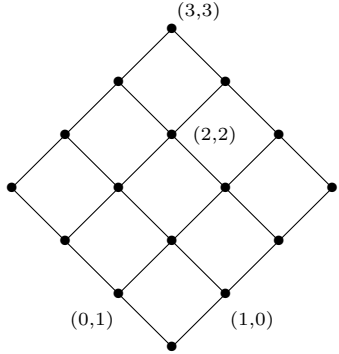
\begin{figure}
    \[
        \xymatrix@!0@=2em{&&& *=0{\bullet} \ar@{-}[ld] \ar@{}[r]^{(3,3)} \ar@{-}[rd] &\\
        &&*=0{\bullet} \ar@{-}[ld] \ar@{-}[rd] &&*=0{\bullet} \ar@{-}[ld] \ar@{-}[rd] \\
        &*=0{\bullet} \ar@{-}[ld] \ar@{-}[rd] &&*=0{\bullet} \ar@{-}[ld] \ar@{-}[rd] \ar@{}[rr]|(.4){({2},{2})} &&*=0{\bullet} \ar@{-}[ld] \ar@{-}[rd]\\
        *=0{\bullet}\ar@{-}[rd]&&*=0{\bullet} \ar@{-}[ld] \ar@{-}[rd] &&*=0{\bullet} \ar@{-}[ld] \ar@{-}[rd] &&*=0{\bullet} \ar@{-}[ld] \\
        &*=0{\bullet} \ar@{-}[rd] &&*=0{\bullet} \ar@{-}[ld] \ar@{-}[rd] &&*=0{\bullet} \ar@{-}[ld]\\
        &&*=0{\bullet} \ar@{}[ld]|{(0,{1})} \ar@{-}[rd]&&*=0{\bullet} \ar@{-}[ld] \ar@{}[rd]|{({1},0)} \\
        &&&*=0{\bullet}&&}
    \]
    \caption{The Heyting semilattice \(A= C_4 \times C_4\) }\label{Figure A}
\end{figure}
\begin{proof}
    Let \(A\) be the Heyting semilattice in Figure~\ref{Figure A}, defined as \(A\coloneq C_4 \times C_4\) where \(C_4=\{ 0, 1, 2, 3\}\) is the chain of length \(4\). We consider \(X\), \(B\), and \(C\), three subobjects of \(A\) defined as follows:
    \begin{align*}
        X & \coloneq \{ ({2}, {2}), (3,3)\},         \\
        B & \coloneq \{ (0,{1}), ({2},{2}), (3,3)\}, \\
        C & \coloneq \{ ({1},0), ({2},{2}),(3,3) \}.
    \end{align*}
    It is clear that \(X\) is up-closed in both \(B\) and \(C\), i.e., \(X\normal B\) and \(X\normal C\). A short computation shows that \(B\ojoin C=A\). We conclude by observing that \(X\) is not up-closed in \(A\), so \(X\) is not normal in \(B\ojoin C\).
\end{proof}

\begin{remark}\label{remark normaliser}
    \emph{Normalisers} in semi-abelian categories were introduced in~\cite{MR3275285} in order to generalise the classical notion of the category of groups. Using Proposition~2.10 from~\cite{MR3275285}, one can see that with the previous theorem we have also proved that the normaliser of \(X\) in \(A\) does not exist. Therefore, another way of expressing Theorem~\ref{theorem normality of unions} is by saying that in \(\HSLat\), normalisers do not exist in general.
\end{remark}
Let us recall that a semi-abelian category \(\C\) is said to be \defn{action accessible} if for each \(X\in \C\) the category \(\SpltExt_\C(X)\) of split extensions with kernel \(X\) has enough sub-terminal objects. This notion was first introduced in~\cite{BJ07} in order to, amongst other things, allow the calculation of centralisers of equivalence relations and of subobjects. There, the authors also proved that action accessibility implies \SH. We refer to it for further details.

\begin{theorem}\label{theorem action accessibility}
    The category \(\HSLat\) of Heyting semilattices is not action accessible.
\end{theorem}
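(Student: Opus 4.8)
The plan is to obtain the failure of action accessibility as a formal consequence of the failure of normality of unions already established in Theorem~\ref{theorem normality of unions}, routing through the nonexistence of normalisers rather than attempting a direct analysis of the categories \(\SpltExt_{\HSLat}(X)\). The single external ingredient I would rely on is the known implication that every action accessible semi-abelian category admits all normalisers, proved in~\cite{MR3275285}; the whole argument is then its contrapositive.

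First I would recall the content of Remark~\ref{remark normaliser}: applied to the configuration \(X\normal B\), \(X\normal C\) with \(B\ojoin C=A\) exhibited in the proof of Theorem~\ref{theorem normality of unions}, Proposition~2.10 of~\cite{MR3275285} translates the failure of \(X\) to be normal in the join \(B\ojoin C=A\) into the statement that the normaliser of \(X\) in \(A\) does not exist. Thus \(\HSLat\) does not admit all normalisers; concretely, the subobject \(X=\{(2,2),(3,3)\}\oleq A\) of Figure~\ref{Figure A} has no normaliser in \(A\).

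Second I would combine this with the cited implication. Since action accessibility forces every subobject to admit a normaliser, and \(\HSLat\) fails to provide one for the specific \(X\oleq A\) above, the category \(\HSLat\) cannot be action accessible. This is the entire argument, and it also explains the closing remark of the paper: \(\HSLat\) does satisfy the condition that centralisers of normal monomorphisms are normal (Proposition~\ref{Proposition Centralisers}), yet fails action accessibility, so the implication from action accessibility to that weaker condition is strict.

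The main obstacle is not mathematical depth but faithful bookkeeping with the references: I must check that Proposition~2.10 of~\cite{MR3275285} genuinely equates ``the normaliser of \(X\) in \(A\) exists'' with the normality-in-the-union property refuted in Theorem~\ref{theorem normality of unions}, and that the implication ``action accessible \(\To\) all normalisers exist'' is stated in a form valid in our exact, protomodular, pointed setting with binary coproducts. Both points are routine once the correct statements are located, since the genuine content—the counterexample \(A=C_4\times C_4\) with its non-up-closed \(X\)—has already been supplied. An alternative, more self-contained route would be to show directly that \(\SpltExt_{\HSLat}(X)\) lacks enough faithful (sub-terminal) objects for a suitable kernel \(X\); I would avoid this, as verifying faithfulness by hand for split extensions of Heyting semilattices is considerably more delicate than invoking the normaliser criterion.
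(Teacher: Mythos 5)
Your overall strategy coincides with the paper's: the same counterexample \(A=C_4\times C_4\), \(X=\{(2,2),(3,3)\}\) from Theorem~\ref{theorem normality of unions}, the same reduction through Remark~\ref{remark normaliser} to the nonexistence of the normaliser of \(X\) in \(A\), and then a citation to close the argument. But the load-bearing citation is wrong, and this is a genuine gap rather than bookkeeping. The blanket implication you invoke---``every action accessible semi-abelian category admits \emph{all} normalisers''---is not proved in~\cite{MR3275285}, nor elsewhere: that paper concerns action \emph{representability} (and, together with~\cite{MR2745553}, yields only that action accessibility gives normal centralisers of normal monomorphisms, which is precisely the weaker condition that the paper's closing remark separates from action accessibility, and which \(\HSLat\) \emph{does} satisfy by Proposition~\ref{Proposition Centralisers}). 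What is actually available is Theorem~3.1 of~\cite{GRAY:NCAAA}, which guarantees, in an action accessible category, the existence of normalisers only of subobjects of a special form---in particular, of images of normal monomorphisms along diagonals. The nonexistence of the normaliser of an \emph{arbitrary} subobject is not known to contradict action accessibility, so your contrapositive does not close as stated.

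Consequently your argument is missing exactly the verification that the paper supplies: the subobject \(X=\{(2,2),(3,3)\}\oleq A\) is the image of the normal subobject (filter) \(\ua{2}=\{2,3\}\normal C_4\) under the diagonal \(C_4\to C_4\times C_4=A\). With this observation, \cite[Theorem~3.1]{GRAY:NCAAA} applies: were \(\HSLat\) action accessible, the normaliser of \(X\) in \(A\) would exist, contradicting Remark~\ref{remark normaliser}. The repair is local---replace the citation by \cite[Theorem~3.1]{GRAY:NCAAA} and add the one-line check that \(X\) is a diagonal image of a filter---but without it the proof rests on a nonexistent theorem. Your own caveat, that the implication ``action accessible \(\To\) all normalisers exist'' must be checked to be ``stated in a form valid in our setting,'' was well placed: that check fails for the statement as you phrased it.
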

\begin{proof}
    We consider the same objects and subobjects as in the proof of Theorem~\ref{theorem normality of unions}. As we observed in Remark~\ref{remark normaliser}, the normaliser of \(X\) in \(A\) does not exist. However, noting that \(X\) is the image of a normal subobject along the diagonal, we conclude via \cite[Theorem~3.1]{GRAY:NCAAA} that the category of Heyting semilattices is not action accessible.
\end{proof}

In particular, this means that the known implication between action accessibility and the condition requiring the existence of centralisers of normal monomorphisms which are themselves normal---see~\cite{MR2745553} and~\cite{MR3275285}---is strict.

Note that \(\HSLat\), being an arithmetical category, trivially has \emph{representable representations} in the sense that the actions on an abelian object are representable~\cite{BJK2,Edinburgh}. On the other hand, it is impossible for all actions to be representable, because this would imply action accessibility~\cite{BJ07}. So \(\HSLat\) is not \emph{action representable}. This shows that a category may have representable representations without being an action accessible/representable category.

\section{Open questions}\label{section open questions}
\subsection{}\label{First question}
Theorem~\ref{HSLat is SSH} tells us that the category \(\HSLat\) satisfies the condition \SSH. The question arises, whether the arithmeticity of \(\HSLat\) explains this fact. In other words, do all arithmetical semi-abelian categories satisfy \SSH? We have no proof, but no counterexample either; all arithmetical semi-abelian categories we know of satisfy the condition, and it is a known fact~\cite{MFVdL3} that arithmeticity implies the condition~\SH. Same question for \ToN.

\subsection{}
The example in~\cite{MFVdL3} which we refer to in Section~\ref{section SSH and W} was meant to separate \SSH{} from \SH, showing that \SSH{} is strictly stronger. Now that we know this example is not valid, the question is open: either find a proof that \SH{} implies \SSH, or a counterexample separating the two conditions.

\subsection{}
Essentially the same question as in~\ref{First question} may be asked for strong protomodularity: Do all Moore categories in the sense of~\cite{Rodelo:Moore} satisfy \SSH? Closely related general questions are: Does \W{} imply \AC{}?
Does \AC{} imply \(\NH^n\)?

\subsection{}
It is not clear to us how, for some given \(n\), the conditions \(\NH^n\) and \(\NH^{n-1}\) which occur at the end of Section~\ref{section arithmetical} are connected.

\subsection{}
Finally, it would be helpful to have a more precise description of the commutators in \(\HSLat\)---one which subsumes Lemma~\ref{lemma commute in hslat}, but gives a complete description of the Higgins or Huq commutator object. If we consider the subobjects \(X=\{x, 1\}\) and \(Y=\{ y, 1\}\) of \(F_2\) (the free Heyting semilattice on \(\{x,y\}\)), then \([ X,Y ]= \ua{w}\) where \(w\coloneq ((x \iimplies y) \iimplies y) \wedge ((y \iimplies x) \iimplies x)\). It can, moreover, be seen that this commutator \([X,Y]\) is generated by the elements \((x \iimplies y) \iimplies y\) and \((y \iimplies x) \iimplies x\). Those two elements vanish if and only if \(x\iimplies y=y\) and \({y\iimplies x=x}\). Therefore, making the parallel with the case of groups---where commutators are generated by elements of the form \(xyx^{-1}y^{-1}\) and \(yxy^{-1}x^{-1}\)---it seems to us that commutators in Heyting semilattices should be generated by elements of the form \((x \iimplies y) \iimplies y\) and \((y \iimplies x) \iimplies x\) in general. Unfortunately, we were not able to prove this.

\section*{Acknowledgements}
We are grateful to Nelson Martins-Ferreira for fruitful discussions on several aspects of the paper.


\end{document}